      \def\Gread@@xetex#1{%
        \IfFileExists{"\Gin@base".bb}%
        {\Gread@eps{\Gin@base.bb}}%
        {\Gread@@xetex@aux#1}%
      }
    \definecolor{urlcolor}{rgb}{0,.145,.698}
    \definecolor{linkcolor}{rgb}{.71,0.21,0.01}
    \definecolor{citecolor}{rgb}{.12,.54,.11}
    \definecolor{ansi-black}{HTML}{3E424D}
    \definecolor{ansi-black-intense}{HTML}{282C36}
    \definecolor{ansi-red}{HTML}{E75C58}
    \definecolor{ansi-red-intense}{HTML}{B22B31}
    \definecolor{ansi-green}{HTML}{00A250}
    \definecolor{ansi-green-intense}{HTML}{007427}
    \definecolor{ansi-yellow}{HTML}{DDB62B}
    \definecolor{ansi-yellow-intense}{HTML}{B27D12}
    \definecolor{ansi-blue}{HTML}{208FFB}
    \definecolor{ansi-blue-intense}{HTML}{0065CA}
    \definecolor{ansi-magenta}{HTML}{D160C4}
    \definecolor{ansi-magenta-intense}{HTML}{A03196}
    \definecolor{ansi-cyan}{HTML}{60C6C8}
    \definecolor{ansi-cyan-intense}{HTML}{258F8F}
    \definecolor{ansi-white}{HTML}{C5C1B4}
    \definecolor{ansi-white-intense}{HTML}{A1A6B2}
    \definecolor{ansi-default-inverse-fg}{HTML}{FFFFFF}
    \definecolor{ansi-default-inverse-bg}{HTML}{000000}
    \definecolor{outerrorbackground}{HTML}{FFDFDF}
    \let\Oldtex\TeX
    \let\Oldlatex\LaTeX
    \renewcommand{\TeX}{\textrm{\Oldtex}}
    \renewcommand{\LaTeX}{\textrm{\Oldlatex}}
    \title{smallCasesComplete}
\def\PY@reset{\let\PY@it=\relax \let\PY@bf=\relax%
    \let\PY@ul=\relax \let\PY@tc=\relax%
    \let\PY@bc=\relax \let\PY@ff=\relax}
\def\PY@tok#1{\csname PY@tok@#1\endcsname}
\def\PY@toks#1+{\ifx\relax#1\empty\else%
    \PY@tok{#1}\expandafter\PY@toks\fi}
\def\PY@do#1{\PY@bc{\PY@tc{\PY@ul{%
    \PY@it{\PY@bf{\PY@ff{#1}}}}}}}
\def\PY#1#2{\PY@reset\PY@toks#1+\relax+\PY@do{#2}}
        \newbox\Wrappedcontinuationbox
        \newbox\Wrappedvisiblespacebox
        \newcommand*\Wrappedvisiblespace {\textcolor{red}{\textvisiblespace}}
        \newcommand*\Wrappedcontinuationsymbol {\textcolor{red}{\llap{\tiny$\m@th\hookrightarrow$}}}
        \newcommand*\Wrappedcontinuationindent {3ex }
        \newcommand*\Wrappedafterbreak {\kern\Wrappedcontinuationindent\copy\Wrappedcontinuationbox}
        \newcommand*\Wrappedbreaksatspecials {%
            \def\PYGZus{\discretionary{\char`\_}{\Wrappedafterbreak}{\char`\_}}%
            \def\PYGZob{\discretionary{}{\Wrappedafterbreak\char`\{}{\char`\{}}%
            \def\PYGZcb{\discretionary{\char`\}}{\Wrappedafterbreak}{\char`\}}}%
            \def\PYGZca{\discretionary{\char`\^}{\Wrappedafterbreak}{\char`\^}}%
            \def\PYGZam{\discretionary{\char`\&}{\Wrappedafterbreak}{\char`\&}}%
            \def\PYGZlt{\discretionary{}{\Wrappedafterbreak\char`\<}{\char`\<}}%
            \def\PYGZgt{\discretionary{\char`\>}{\Wrappedafterbreak}{\char`\>}}%
            \def\PYGZsh{\discretionary{}{\Wrappedafterbreak\char`\#}{\char`\#}}%
            \def\PYGZpc{\discretionary{}{\Wrappedafterbreak\char`\%}{\char`\%}}%
            \def\PYGZdl{\discretionary{}{\Wrappedafterbreak\char`\$}{\char`\$}}%
            \def\PYGZhy{\discretionary{\char`\-}{\Wrappedafterbreak}{\char`\-}}%
            \def\PYGZsq{\discretionary{}{\Wrappedafterbreak\textquotesingle}{\textquotesingle}}%
            \def\PYGZdq{\discretionary{}{\Wrappedafterbreak\char`\"}{\char`\"}}%
            \def\PYGZti{\discretionary{\char`\~}{\Wrappedafterbreak}{\char`\~}}%
        }
        \newcommand*\Wrappedbreaksatpunct {%
            \lccode`\~`\.\lowercase{\def~}{\discretionary{\hbox{\char`\.}}{\Wrappedafterbreak}{\hbox{\char`\.}}}%
            \lccode`\~`\,\lowercase{\def~}{\discretionary{\hbox{\char`\,}}{\Wrappedafterbreak}{\hbox{\char`\,}}}%
            \lccode`\~`\;\lowercase{\def~}{\discretionary{\hbox{\char`\;}}{\Wrappedafterbreak}{\hbox{\char`\;}}}%
            \lccode`\~`\:\lowercase{\def~}{\discretionary{\hbox{\char`\:}}{\Wrappedafterbreak}{\hbox{\char`\:}}}%
            \lccode`\~`\?\lowercase{\def~}{\discretionary{\hbox{\char`\?}}{\Wrappedafterbreak}{\hbox{\char`\?}}}%
            \lccode`\~`\!\lowercase{\def~}{\discretionary{\hbox{\char`\!}}{\Wrappedafterbreak}{\hbox{\char`\!}}}%
            \lccode`\~`\/\lowercase{\def~}{\discretionary{\hbox{\char`\/}}{\Wrappedafterbreak}{\hbox{\char`\/}}}%
            \catcode`\.\active
            \catcode`\,\active
            \catcode`\;\active
            \catcode`\:\active
            \catcode`\?\active
            \catcode`\!\active
            \catcode`\/\active
            \lccode`\~`\~
        }
    \let\OriginalVerbatim=\Verbatim
    \renewcommand{\Verbatim}[1][1]{%
        \sbox\Wrappedcontinuationbox {\Wrappedcontinuationsymbol}%
        \sbox\Wrappedvisiblespacebox {\FV@SetupFont\Wrappedvisiblespace}%
        \def\FancyVerbFormatLine ##1{\hsize\linewidth
            \vtop{\raggedright\hyphenpenalty\z@\exhyphenpenalty\z@
                \doublehyphendemerits\z@\finalhyphendemerits\z@
                \strut ##1\strut}%
        }%
        \def\FV@Space {%
            \nobreak\hskip\z@ plus\fontdimen3\font minus\fontdimen4\font
            \discretionary{\copy\Wrappedvisiblespacebox}{\Wrappedafterbreak}
            {\kern\fontdimen2\font}%
        }%

        \Wrappedbreaksatspecials
        \OriginalVerbatim[#1,codes*=\Wrappedbreaksatpunct]%
    }
    \definecolor{incolor}{HTML}{303F9F}
    \definecolor{outcolor}{HTML}{D84315}
    \definecolor{cellborder}{HTML}{CFCFCF}
    \definecolor{cellbackground}{HTML}{F7F7F7}
    \newcommand{\boxspacing}{\kern\kvtcb@left@rule\kern\kvtcb@boxsep}
    \newcommand{\prompt}[4]{
        {\ttfamily\llap{{\color{#2}[#3]:\hspace{3pt}#4}}\vspace{-\baselineskip}}
    }
\tikzset{main node/.style={circle,draw,minimum size=.6cm,inner sep=0pt},}
\newcommand*{\StrikeThruDistance}{0.15cm}%
\newcommand{\cut}{\mathrm{cat}}
\newcommand{\cat}{\cut}
\newtheorem{theorem}{Theorem}[section]
\newtheorem{corollary}[theorem]{Corollary}
\newtheorem{lemma}[theorem]{Lemma}
\newtheorem{definition}[theorem]{Definition}
\crefname{lemma}{Lemma}{Lemmas}
\title{Cuts, Cats, and Complete Graphs}
\author{Rylo Ashmore, Danny Dyer, Trent Marbach, Rebecca Milley}
\begin{document}
\maketitle

\begin{abstract}
We introduce the game of Cat Herding, where an omnipresent herder slowly cuts down a graph until an evasive cat player has nowhere to go. The number of cuts made is the score of a game, and we study the score under optimal play. In this paper, we begin by deriving some general results, and then we determine the precise cat number for paths, cycles, stars, and wheels. Finally, we identify an optimal Cat and Herder strategy on complete graphs, while providing both a recurrence and closed form for $\cat(K_n)$. 
\end{abstract}
\section{Introduction}
The published board game \emph{Nowhere To Go}\cite{Atkins_2012} is a combinatorial game in which each player is on a vertex of a network. On each player's turn, they move to a new vertex along any path not through the other player, and then delete any edge. Play proceeds until a player cannot move along a path, and they have ``Nowhere To Go''. A notable endgame in this game is when each player ends up in their own component. In this case, each component has one player deleting edges and the other moving along non-trivial paths. As play continues, whichever component permits the player to evade capture the longest will win. We introduce the game of Cat Herding, which can model this endgame. One player (the cat) will move their cat token along non-trivial paths and the other (the herder) will delete edges. The score of a game is the total number of edge deletions that are necessary to isolate the cat. The herder aims to minimize the score, while the cat aims to maximize it, and the score when both play optimally is the \emph{cat number} of the graph.

As this paper introduces a novel pursuit-evasion game, we examine some similarities and differences to pre-existing pursuit-evasion games. A classic pursuit-evasion game on graphs is Cops and Robbers. In that game, both players are present on vertices of the graph, and a graph is $k$-cop-win when $k$ cops have a strategy to guarantee some cop eventually occupies the same space as the robber. This is similar to Cat Herding, as the cat is on a vertex of the graph. The graphs where the one cop is sufficient to capture the robber (`copwin' graphs) were first classified structurally in \cite{Nowakowski1983VertextovertexPI}, and then later described algorithmically in \cite{AIGNER19841}, and analyzed by genus in \cite{QUILLIOT198589}. Cat herding differs from Cops and Robbers as the pursuer (the herder) plays omnipresently. Omnipresent play also exists in pre-existing literature, such as in the Angel and Devil game presented in \cite{berlekamp_2018}, where the Devil omnipresently chooses a vertex to delete, and the Angel can move up to distance $k$. The Devil wins if they can eventually capture the Angel, and the Angel wins if they can evade capture forever. The Angel and Devil game has sparked multiple papers seeking the barrier between when the Angel and Devil win in these games\cite{article,10.1017/S0963548306008303,KlosterAngelSolution}. The Angel and Devil problem has many variants, including increasing dimension \cite{MR2192775}, or changing the Angel to walk rather than jump \cite{MR2190915}. Comparing to the game of Cat Herding, there is a notable difference in that the herder deletes edges, while the Devil deletes vertices. Finally, we note that the cat has infinite velocity, as they may move along any non-trivial path on their turn. A variant of Cops and Robbers with an infinite velocity (visible) robber is analyzed in \cite{SEYMOUR199322}, although in this game the pursuers capture the evader by moving onto them as opposed to isolating them through structural deletions. We also note that the localization game introduced in \cite{SEAGER20123265}, in which distance probes are placed in the graph every turn, may be viewed as the localizer playing omnipresently, or the probes moving at infinite velocity.
\section{Definitions}

For standard graph definitions of paths, cycles, and complete graphs, we defer to \cite{west_introduction_2000}.

The game of Cat Herding is played on a graph $G$ between two players, the \emph{cat} and the 
\emph{herder}. The game's setup consists of the cat choosing a starting vertex for their cat token, which we will also refer to as the cat. After this, both players will alternate turns, beginning with the herder: they will delete (any) one edge, called a \emph{cut}, and the cat moves along a path to a new vertex. Note that in this context, a cut refers to a single edge cut, which may or may not disconnect the graph. Note also that the cat must move on its turn. Once the cat is on an isolated vertex and cannot move (which must happen within $|E(G)|$ cuts), the game is done. The objective of the herder is to minimize the number of cuts required to isolate the cat, while the objective of the cat is to maximize the number of cuts required to isolate them. When both players play such that no deviation can improve the score (with respect to their objective), we say that the game is optimally played. We will also refer to the cat having no moves available as being \emph{captured}.

\begin{definition}
    For $v\in V(G)$, we define the cat number of a vertex $\cat(G,v)$ to be the number of cuts required to isolate the cat in an optimally played game in $G$ where the cat starts at $v$. We define the cat number of a graph $\cat(G)=\max_{v\in V(G)}\cat(G,v)$.
\end{definition}

\newcounter{j}
\tikzset{strike thru arrow/.style={
    decoration={markings, mark=at position 0.5 with {
        \draw [thick,-] 
            ++ (-\StrikeThruDistance,-\StrikeThruDistance) 
            -- ( \StrikeThruDistance, \StrikeThruDistance);}
    },
    postaction={decorate},
}}
\newcommand{\subfigSpace}[0]{\vspace{.3cm}}
\newcommand{\eDist}{.15cm}
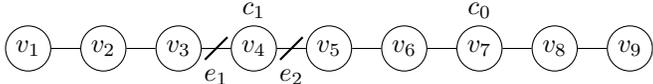
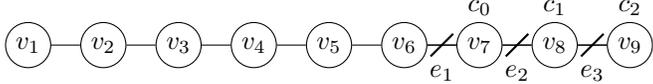
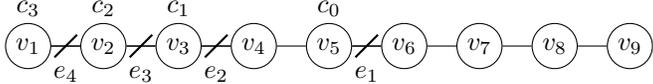
\begin{figure}
     \centering
     \begin{subfigure}[c]{\textwidth}
         \centering
            \begin{tikzpicture}
                \foreach \i in {1,...,9}{
                    \node[main node] (\i) at (\i,0) {$v_\i$};
                }
                \draw (1) -- (2);
                \draw (2) -- (3);
                \draw (3) -- (4);
                \draw (4) -- (5);
                \draw (5) -- (6);
                \draw (6) -- (7);
                \draw (7) -- (8);
                \draw (8) -- (9);
                
                \node at (7,.5) {$c_0$};
                \draw[strike thru arrow] (3) -- (4) node [midway, below=\eDist] {$e_1$};
                \node at (4,.5) {$c_1$};
                \draw[strike thru arrow] (4) -- (5) node [midway, below=\eDist] {$e_2$};
            \end{tikzpicture}
         \caption{Sub-optimal play by both players, with cat starting on $v_7$.}
         \label{fig:subOptEx}
     \end{subfigure}
     \begin{subfigure}[c]{\textwidth}
        \subfigSpace
        \centering
         \begin{tikzpicture}
                \foreach \i in {1,...,9}{
                    \node[main node] (\i) at (\i,0) {$v_\i$};
                }
                \draw (1) -- (2);
                \draw (2) -- (3);
                \draw (3) -- (4);
                \draw (4) -- (5);
                \draw (5) -- (6);
                \draw (6) -- (7);
                \draw (7) -- (8);
                \draw (8) -- (9);
                
                \node at (7,.5) {$c_0$};
                \draw[strike thru arrow] (6) -- (7) node [midway, below=\eDist] {$e_1$};
                \node at (8,.5) {$c_1$};
                \draw[strike thru arrow] (7) -- (8) node [midway, below=\eDist] {$e_2$};
                \node at (9,.5) {$c_2$};
                \draw[strike thru arrow] (9) -- (8) node [midway, below=\eDist] {$e_3$};
            \end{tikzpicture}
         \caption{Optimal play when cat starts on $v_7$}
         \label{fig:optV7}
     \end{subfigure}
     \begin{subfigure}[c]{\textwidth}
        \subfigSpace
        \centering
         \begin{tikzpicture}
                \foreach \i in {1,...,9}{
                    \node[main node] (\i) at (\i,0) {$v_\i$};
                }
                \draw (1) -- (2);
                \draw (2) -- (3);
                \draw (3) -- (4);
                \draw (4) -- (5);
                \draw (5) -- (6);
                \draw (6) -- (7);
                \draw (7) -- (8);
                \draw (8) -- (9);
                
                \node at (5,.5) {$c_0$};
                \draw[strike thru arrow] (5) -- (6) node [midway, below=\eDist] {$e_1$};
                \node at (3,.5) {$c_1$};
                \draw[strike thru arrow] (3) -- (4) node [midway, below=\eDist] {$e_2$};
                \node at (2,.5) {$c_2$};
                \draw[strike thru arrow] (2) -- (3) node [midway, below=\eDist] {$e_3$};
                \node at (1,.5) {$c_3$};
                \draw[strike thru arrow] (1) -- (2) node [midway, below=\eDist] {$e_4$};        
            \end{tikzpicture}
         \caption{Optimal play with optimal starting vertex $v_5$}
         \label{fig:optV5}
     \end{subfigure}
    \caption{Three possible games on $P_9$. Vertex $c_i$ denotes the cat's position on turn $i$, and $e_i$ denotes the $i$th edge that the herder cut.}
    \label{fig:three games}
\end{figure}

For example, consider play on the graph $P_9$, as shown in \Cref{fig:three games}. In \Cref{fig:subOptEx}, which illustrates suboptimal play by both players, the cat makes a sub-optimal opening move $v_7$. The herder responds with a sub-optimal move by cutting edge $v_3v_4$. The cat then makes a sub-optimal move to the leaf $v_4$, and the herder now makes an optimal move of deleting $v_4v_5$, isolating the cat in $2$ cuts. Because neither player played optimally, we do not conclude anything about $\cat(P_9)$.

In \Cref{fig:optV7} diagramming optimal play where the cat is forced to start at $v_7$, the herder first cuts $v_6v_7$. The cat moves to $v_8$, and the herder cuts $v_7v_8$. The cat moves to $v_9$ as the only move, and the herder cuts $v_8v_9$, isolating the cat in $3$ cuts. We will later see that this is an optimal game when the cat starts at $v_7$, so that $\cat(P_9,v_7)=3$.

In \Cref{fig:optV5} diagramming optimal play, the cat starts by choosing to start on $v_5$. The herder deletes $v_5v_6$, and the cat moves to the center of the resulting component, $v_3$. The herder cuts $v_3v_4$, and the cat moves to the center of the resulting component, $v_2$. The herder cuts $v_2v_3$, and the cat makes their only move to $v_1$. After this, the herder cuts $v_1v_2$, isolating the cat in $4$ cuts. We will later see that this is an optimal game when the cat starts at $v_5$, so $\cat(P_9,v_5)=4$. Further, this is maximal across all vertices, so that $\cat(P_9)=4$.    
\section{Initial Results}
We begin by considering the game generally, seeking some easy upper and lower bounds for the $\cat(G)$ parameter. In seeking upper and lower bounds, we note that we need to find cat/herder strategies that guarantee a good performance for their respective objectives.

\begin{lemma}\label{stratRules}
    If there exists a cat strategy $C$ guaranteeing survival for $k$ cuts, then $\cat(G)\geq k$. Similarly, if there exists a herder strategy $H$ guaranteeing capture in at most $k$ cuts, then $\cat(G)\leq k$.
\end{lemma}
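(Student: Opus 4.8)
The plan is to view Cat Herding from a fixed starting vertex as a finite, perfect-information, zero-sum game, so that it has a well-defined value; the lemma then amounts to the familiar fact that any single strategy for one player bounds that value on the appropriate side. First I would record that for each $v\in V(G)$ the game in which the cat starts at $v$ terminates after at most $|E(G)|$ cuts, and at every position there are only finitely many edges the herder may cut and finitely many vertices the cat may move to, so the game tree is finite. Hence (Zermelo) the game has a value, which by the definition of optimal play is exactly $\cat(G,v)$, and it satisfies the minimax identity
\[
\cat(G,v) \;=\; \max_{C}\,\min_{H}\, s(C,H) \;=\; \min_{H}\,\max_{C}\, s(C,H),
\]
where $C$ ranges over cat strategies starting at $v$, $H$ over herder strategies, and $s(C,H)$ is the number of cuts made when the cat plays $C$ and the herder plays $H$.

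For the first claim, suppose a cat strategy $C$ guarantees survival for $k$ cuts. It designates some starting vertex $v_0$, and by hypothesis $s(C,H)\ge k$ for every herder strategy $H$, so $\min_H s(C,H)\ge k$. Since $\cat(G,v_0)=\max_{C'}\min_H s(C',H)$ is a maximum over all cat strategies from $v_0$ (one of which is $C$), we get $\cat(G,v_0)\ge k$, and therefore $\cat(G)=\max_{v}\cat(G,v)\ge\cat(G,v_0)\ge k$.

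For the second claim, suppose a herder strategy $H$ guarantees capture within $k$ cuts against every cat starting vertex and every subsequent cat play, i.e.\ $s(C,H)\le k$ for all $C$. Fix $v\in V(G)$. Restricting to cat strategies that start at $v$ gives $\max_{C} s(C,H)\le k$, and since $\cat(G,v)=\min_{H'}\max_{C} s(C,H')$ is a minimum over herder strategies (one of which is $H$), we get $\cat(G,v)\le k$. As this holds for every $v$, we conclude $\cat(G)=\max_v\cat(G,v)\le k$.

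The only genuinely delicate point — and the step I would make sure to spell out — is the passage from the paper's informal notion of ``optimally played'' to the concrete minimax value $\cat(G,v)$; once that is in hand, both halves are just bookkeeping with $\max$ and $\min$. Because the game is finite with perfect information this is standard, but it deserves an explicit remark since $\cat(G,v)$ was introduced via optimal play rather than via the minimax formula. (One could equally argue by contradiction: if $\cat(G)<k$ while a cat strategy $C$ survives $k$ cuts, then in the optimally played game from $C$'s starting vertex the cat could deviate to $C$ and strictly improve the score, contradicting optimality — but this again rests on the same fact that optimal strategies exist and realize the value.)
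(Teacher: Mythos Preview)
Your argument is correct and is essentially the same as the paper's, just unpacked with considerably more formal machinery: the paper's proof is a two-line observation that the cat can always fall back on $C$ (and the herder on $H$), so the optimal score is bounded accordingly. Your explicit appeal to Zermelo and the minimax identity makes rigorous exactly the step the paper leaves implicit in the phrase ``at worst the cat will choose strategy $C$,'' but the underlying idea is identical.
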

\begin{proof}
    If $C$ is a cat strategy guaranteeing $k$ cut survival, then at worst the cat will choose strategy $C$ and attain $\cat(G)\geq k$ cuts. Better strategies may score better.
    A similar argument holds for herder strategies.
\end{proof}
Recall that a $k$-core of $G$ is a maximal subgraph $H$ with all vertices $v\in V(H)$ having $\deg(v)\geq k$.
\begin{lemma}\label{dumbLower}
    If $G$ is a connected graph with a $k$-core, then $\cat(G)\geq k$. 
\end{lemma}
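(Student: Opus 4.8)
The plan is to apply \Cref{stratRules}: it suffices to describe a cat strategy that is guaranteed to survive at least $k$ cuts, no matter how the herder plays. Let $H$ be a $k$-core of $G$, so every vertex of $H$ has at least $k$ neighbours inside $H$. The cat will start at an arbitrary vertex $v_0\in V(H)$ and will maintain the invariant that after each of its moves its token sits on a vertex of $H$; concretely, whenever it is the cat's turn, it moves along a single edge of $H$ to an adjacent vertex of $H$ (a legal move, since a length-one path is a path, and an adjacent vertex is a new vertex).

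The heart of the argument is a simple edge-counting observation. Suppose the herder has just made its $i$-th cut, so at most $i$ edges have been deleted from $G$ in total, and in particular at most $i$ edges of $H$ incident to the cat's current vertex $c_{i-1}\in V(H)$ have been deleted. Since $c_{i-1}$ had at least $k$ incident edges in $H$ to begin with, at least $k-i$ of them survive, and each of these leads to a vertex still in $V(H)$. Hence, as long as $i\le k-1$, the cat has at least one available move into $H$, so its token is not isolated and the invariant is preserved for its next move.

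Therefore the cat is never captured by any of the cuts $e_1,\dots,e_{k-1}$, so the herder needs at least $k$ cuts to isolate it; that is, $\cat(G,v_0)\ge k$, and hence $\cat(G)=\max_{v}\cat(G,v)\ge k$. I do not anticipate a serious obstacle here: the only points requiring care are confirming that ``move to an adjacent vertex of $H$'' is always a legal cat move and that the number of deleted edges incident to $c_{i-1}$ is correctly bounded by the total number of cuts made so far. It is also worth noting that connectedness of $G$ is never actually used, since the cat confines its token to $H$ throughout.
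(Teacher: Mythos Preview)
Your proof is correct and follows essentially the same approach as the paper's: both have the cat confine itself to the $k$-core $H$ and observe that isolation at a vertex of $H$ requires at least $k$ incident edges to be cut. Your version simply spells out the invariant and the edge-counting bound $k-i\ge 1$ for $i\le k-1$ more explicitly than the paper's terse two-line argument; the observation that connectedness of $G$ is unused is also accurate.
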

\begin{proof}
    Let $H$ be a $k$-core of $G$. The cat's strategy is to voluntarily restrict play to $H$. If the cat is isolated on $v\in V(H)$, then all edges incident to $v$ must have been cut. There are at least $k$ such edges, giving the inequality.
\end{proof}
In particular, this shows that the cat number is unbounded when considered over all graphs. 

Once a graph is disconnected, the herder has a choice. They could play in the component the cat is in, or they may play outside of it. If the herder plays outside of the component the cat is in, we view this as a \emph{passing move}. We note that herder passing moves are never better than cutting in the cat's component. Namely, we show the following.
\begin{lemma}\label{noPass}
    If $G$ is a connected graph, $v\in V(G)$, and $e\in E(G)$, then $\cat(G,v)\geq \cat(G-e,v)$.
\end{lemma}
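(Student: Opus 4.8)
The plan is to prove the slightly more general statement that $\cat(\Gamma,v)\ge\cat(\Gamma-e,v)$ for \emph{every} graph $\Gamma$ (not necessarily connected), every $v\in V(\Gamma)$, and every $e\in E(\Gamma)$, by strong induction on $|E(\Gamma)|$; the lemma is then the special case of $\Gamma$ connected. First I would record the obvious recursive description of the game value. If $v$ is isolated in $\Gamma$ then $\cat(\Gamma,v)=0$; otherwise $\cat(\Gamma,v)=1+\min_{f\in E(\Gamma)}h(\Gamma-f,v)$, where $h(\Delta,u)$ denotes the value of the game from the position with graph $\Delta$ and the cat at $u$ but with the \emph{cat} to move next, namely $h(\Delta,u)=0$ if $u$ is isolated in $\Delta$, and $h(\Delta,u)=\max\{\cat(\Delta,w):w\neq u,\ w\text{ in the component of }u\}$ otherwise. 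The base case $|E(\Gamma)|=0$ is vacuous, as there is no edge to delete.

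The key sub-step, which is immediate from the inductive hypothesis, is that deleting an edge cannot increase the cat-to-move value: for any graph $\Delta$ with $|E(\Delta)|<|E(\Gamma)|$, any vertex $u$, and any $f\in E(\Delta)$, one has $h(\Delta,u)\ge h(\Delta-f,u)$. Indeed, the component of $u$ in $\Delta-f$ is contained in the component of $u$ in $\Delta$, so the maximum defining $h(\Delta-f,u)$ ranges over a subset of the vertices $w$ appearing in the maximum defining $h(\Delta,u)$; and for each such $w$ the inductive hypothesis gives $\cat(\Delta-f,w)\le\cat(\Delta,w)$. (The cases where $u$ becomes isolated only make an $h$ value equal to $0$, which is harmless.)

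With this in hand the induction step is short. I may assume $v$ is not isolated in $\Gamma-e$, since otherwise $\cat(\Gamma-e,v)=0\le\cat(\Gamma,v)$; in particular $E(\Gamma-e)\neq\varnothing$ and the recursion applies to both $\Gamma$ and $\Gamma-e$. Let $f_0\in E(\Gamma)$ attain the minimum in $\cat(\Gamma,v)=1+\min_{f}h(\Gamma-f,v)$. If $f_0=e$, choose any $f'\in E(\Gamma-e)$; the sub-step gives $h(\Gamma-e,v)\ge h(\Gamma-e-f',v)\ge\min_{f'\in E(\Gamma-e)}h(\Gamma-e-f',v)$, hence $\cat(\Gamma,v)=1+h(\Gamma-e,v)\ge\cat(\Gamma-e,v)$. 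If $f_0\neq e$, then $f_0\in E(\Gamma-e)$, and the sub-step applied to $\Gamma-f_0$ gives $h(\Gamma-f_0,v)\ge h(\Gamma-f_0-e,v)\ge\min_{f'\in E(\Gamma-e)}h(\Gamma-e-f',v)$, so again $\cat(\Gamma,v)=1+h(\Gamma-f_0,v)\ge\cat(\Gamma-e,v)$.

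I expect the main obstacle to be resisting the temptation to argue this ``operationally'' — for instance, having the herder in $G$ simply ignore $e$ and copy an optimal strategy on $G-e$. That approach runs into a turn-parity problem: if the herder ever spends a cut on $e$ (a ``wasted'' move), the cat must still respond, yet there is no corresponding cat move in the $G-e$ game to copy, and the naive repair costs the cat a tempo. The induction above is attractive precisely because it never needs to align the two games move for move; the whole force of ``a wasted cut cannot help the herder'' is packaged into the monotonicity of $h$.
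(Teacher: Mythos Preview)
Your proof is correct. The induction on $|E(\Gamma)|$ via the recursive game-value formulas is clean, and the sub-step showing $h(\Delta,u)\ge h(\Delta-f,u)$ is exactly the right engine; the two cases $f_0=e$ and $f_0\neq e$ then fall out immediately. Generalizing to possibly disconnected $\Gamma$ is a sensible move, since $\Gamma-f$ need not be connected.

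Your route, however, is genuinely different from the paper's. The paper argues operationally: the cat in $G$ mimics an optimal $G-e$ strategy, and when the herder finally cuts $e$, the cat pretends some other edge $e''$ was cut instead, makes the corresponding response, and thereafter voluntarily restricts play to the subgraph with $e''$ removed (``ties one hand behind its back''). This is precisely the turn-parity repair you worried about in your last paragraph, and it does work---the herder's wasted cut of $e$ is balanced by the cat's self-imposed deletion of $e''$, so the score from that point on is at least $\cat(H,v)$ for the current subgraph $H$. So the operational approach is not doomed; it just needs this one trick. What your approach buys is that the trick is absorbed into the monotonicity of $h$ and never has to be spelled out as a strategy; what the paper's approach buys is an explicit cat strategy witnessing the bound, which plugs directly into their Lemma~\ref{stratRules} framework and is reused verbatim to prove the subgraph monotonicity Theorem~\ref{subgraphMonotone}.
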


\begin{proof}
    We must provide a cat strategy in $G$ that will score at least $\cat(G-e,v)$. The cat's strategy in general is to mimic play in $G-e$, starting by choosing the same start vertex in $G$ as in $G-e$. On any herder move, if the herder cuts $e'\in G-e$, then the cat may query optimal strategy in $G-e$ to proceed. The only exception is if the herder chooses to cut $e'\in G\setminus (G-e)$, namely $e=e'$.

    At the moment the herder cuts $e'=e$, let $H$ be the subgraph the cat is left playing on. Let the cat be on $v\in V(H)$ with $\cat(H,v)$ as the remaining optimal score, and consider what the cat would do if the herder cut edge $e''\neq e$. If cutting edge $e''$ isolates the cat, then $\cat(H,v)=1$ and $\cat(H-e'',v)=0$, and adding the number of moves taken before $e'=e$ is cut to both sides, we find the result holds. Otherwise, the cat has some follow-up move that must score at least $\cat(H,v)-1$ in $H-e''$. Then it follows that there must be a path in $H-e''$ to a vertex $x$ such that $\cat(H-e'',x)\geq \cat(H,v)-1$ for the cat's move. If the herder cuts $e\not\in G-e$ as a `passing move' in $G$, then the cat may henceforth restrict themselves to $H-e''$ (by tying one hand behind their back) and attain $\cat(H-e'',x)\geq \cat(H,v)-1$ moves by moving to $x$. This is a cat strategy showing a herder's passing move is no better than cutting any edge $e$ in the graph, and so by \cref{stratRules}, $\cat(G,v)\geq \cat(G-e,v)$. 
\end{proof}
The distribution of $\cat(G,v)$ across all $v$ cannot contain a single vertex with a score significantly higher than all others. Namely, we show the following.
\begin{lemma}\label{noPass2}
    For all vertices $v\in V(G)$ in a connected graph $G\ncong K_1$, there exists a vertex $x$ with non-trivial path $P_{x,v}$ from $x$ to $v$ such that $\cat(G,x)\geq \cat(G,v)-1$. 
\end{lemma}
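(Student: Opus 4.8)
The plan is to examine the very first move of an optimally played game in which the cat starts at $v$, and then to take $x$ to be the vertex the cat moves to. Since $G$ is connected and $G\ncong K_1$, the vertex $v$ has a neighbour, so the herder does have a legal opening cut and $\cat(G,v)\geq 1$ (the cat is not already isolated). Let $e^\star$ be an optimal herder opening cut, i.e.\ one minimizing the cat's best remaining score, so that $\cat(G,v)=1+(\text{value of the resulting position for the cat})$.

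I would split into two cases. If cutting $e^\star$ isolates $v$, then the game ends at once and $\cat(G,v)=1$; here I simply take $x$ to be a neighbour of $v$, so the single edge $xv$ is a non-trivial path and trivially $\cat(G,x)\geq 0=\cat(G,v)-1$. Otherwise the cat has an optimal reply, moving along some non-trivial path in $G-e^\star$ from $v$ to a vertex $x$, and optimality of this line gives $\cat(G,v)=1+\cat(G-e^\star,x)$. That same path is still a non-trivial path from $x$ to $v$ in $G$, since re-inserting the deleted edge $e^\star$ destroys no path, so it provides the required $P_{x,v}$.

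It then only remains to compare $\cat(G,x)$ with $\cat(G-e^\star,x)$, and this is exactly \Cref{noPass}: applied to the connected graph $G$, the vertex $x$, and the edge $e^\star$, it yields $\cat(G,x)\geq \cat(G-e^\star,x)=\cat(G,v)-1$, which is the desired inequality.

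The argument is short, and I do not expect a genuine obstacle; the only subtlety worth flagging is the degenerate opening move in which the herder's first cut already isolates the cat, so that the ``follow the cat'' idea has nothing to follow — this is precisely why the case split above is needed. One should also check that the split is exhaustive: $\cat(G,v)=0$ is impossible because $G\ncong K_1$ forces $v$ to have positive degree, and $\cat(G,v)\leq 1$ holds exactly when some opening cut isolates $v$.
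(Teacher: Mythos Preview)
Your proof is correct and follows essentially the same approach as the paper's: both argue from a single herder cut and then take $x$ to be the cat's optimal reply, finishing with \Cref{noPass}. The only cosmetic difference is that the paper picks an \emph{arbitrary} edge $e$ (using $\cat(G,v)\leq 1+\cat(G-e,x)$ since a non-optimal herder cut can only help the cat), whereas you pick an optimal $e^\star$ to get equality; both versions work and the case split for the isolating cut is identical.
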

\begin{proof}
    Let $v\in V(G)$ and $e\in E(G)$. If $G-e$ isolates the cat then, $\cat(G,v)=1$, and $\cat(G,x)\geq 0$ gives us the result. Otherwise, in $G-e$ with the cat on $v$, the cat has some follow-up move, say to $x$. Then the cat moves to vertex $x$ along non-trivial $x-v$ path in $G-e$, scoring $\cat(G-e,x)$. Since $e$ was some herder move (which may or may not be optimal), we know that $\cat(G,v)\leq \cat(G-e,x)+1\leq \cat(G,x)+1$ by using \cref{noPass} for the final inequality.
\end{proof}
How do $\cat(G,v)$ and $1+\cat(G-e,v)$ relate? Namely, when would the cat trade an edge cut $e$ for a point added to the score, without moving? In \cref{noPass}, we showed that $\cat(G,v)\geq \cat(G-e,v)$. In fact, this gap can be arbitrarily large. Consider a cat on vertex $v$ on a $4$-cycle, where the vertex opposite $v$ is a member of a large clique, as in \Cref{fig:stallArbitrarilyBad}. If the cat traded a point added to the score for a cut, the herder would get two consecutive cuts and isolate the cat. Meanwhile, the alternative would be that the cat could escape to the large clique and obtain at least $\delta(K_n)=n-1$ by \cref{dumbLower}, showing the gap is arbitrarily large.

At this point, we can show that the game of Cat Herding is monotonic with respect to subgraphs.

\begin{theorem}\label{subgraphMonotone}
    If $H$ is a subgraph of $G$, then $\cat(H)\leq \cat(G)$. Further, if $v\in V(H)$, then $\cat(H,v)\leq \cat(G,v)$.
\end{theorem}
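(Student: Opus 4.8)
The plan is to reduce the theorem to its second (pointwise) assertion, then reduce that to repeated applications of \cref{noPass}.

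\textbf{Reduction to the pointwise statement.} Since $\cat(H)=\max_{v\in V(H)}\cat(H,v)$, it suffices to prove $\cat(H,v)\le\cat(G,v)$ for every $v\in V(H)$: choosing $v$ so that $\cat(H)=\cat(H,v)$ then yields $\cat(H)=\cat(H,v)\le\cat(G,v)\le\max_{u\in V(G)}\cat(G,u)=\cat(G)$, which is exactly the first assertion.

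\textbf{Removing the vertex deletions.} Fix $v\in V(H)$ and let $H'$ be the spanning subgraph of $G$ with vertex set $V(G)$ and edge set $E(H)$. First I would record the easy observation that $\cat(G,v)$ depends only on the connected component of $G$ containing $v$: the cat can never move out of its component, and an optimal herder never spends a cut in another component, so deleting or adding other components changes nothing. Since every vertex of $V(G)\setminus V(H)$ is isolated in $H'$, the component of $v$ is the same in $H$ and in $H'$, whence $\cat(H',v)=\cat(H,v)$. So it is enough to prove $\cat(H',v)\le\cat(G,v)$, where now $H'$ is obtained from $G$ purely by deleting edges.

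\textbf{Chaining \cref{noPass}.} Write $G=G_0\supseteq G_1\supseteq\cdots\supseteq G_m=H'$ with $G_{i+1}=G_i-e_i$. It suffices to show $\cat(G_i,v)\ge\cat(G_{i+1},v)$ for each $i$, and then telescope. If $G_i$ is connected this is precisely \cref{noPass}. If $G_i$ is disconnected, let $C$ be the component of $v$ in $G_i$; by the observation above $\cat(G_i,v)=\cat(C,v)$, and either $e_i\notin E(C)$, so $v$'s component is unchanged and $\cat(G_{i+1},v)=\cat(G_i,v)$, or $e_i\in E(C)$, in which case $\cat(G_{i+1},v)=\cat(C-e_i,v)\le\cat(C,v)=\cat(G_i,v)$ by \cref{noPass} applied to the connected graph $C$. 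Either way the step holds, and telescoping gives $\cat(G,v)\ge\cat(H',v)=\cat(H,v)$.

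\textbf{Main obstacle.} The one genuinely delicate point is buried inside \cref{noPass}: when the cat imitates optimal play on the smaller graph but the herder cuts an edge irrelevant there (a ``passing move''), the cat is still forced to move, and must do so while giving up at most one unit of its guaranteed survival count. That bookkeeping, carried out for a single passing move in the proof of \cref{noPass}, is exactly what makes monotonicity non-trivial; everything else above is routine. A more self-contained alternative would be to exhibit one combined cat strategy in $G$ that mirrors an optimal strategy on $v$'s component of $H$, treats every herder cut outside that component as a passing move, and maintains the invariant $(\text{virtual remaining value})+(\text{cuts made so far})\ge\cat(H,v)$, concluding via \cref{stratRules}; but this just re-runs the \cref{noPass} argument with possibly many passing moves, so factoring through \cref{noPass} as above is the cleaner route.
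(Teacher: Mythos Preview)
Your proof is correct and takes essentially the same route as the paper—both reduce the theorem to \cref{noPass}, with the paper doing it in one stroke (the cat voluntarily plays in $H$, and by \cref{noPass} an optimal herder never cuts outside $H$) while you chain single-edge deletions. One small remark: your ``easy observation'' that $\cat(\cdot,v)$ depends only on $v$'s component is itself a passing-move statement whose $\ge$ direction needs exactly the \cref{noPass} bookkeeping you later flag as the main obstacle, so it is not quite as free as you suggest, though this does not affect correctness.
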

\begin{proof}
    This follows immediately from \cref{noPass}. We show that a game in $G$ scores at least as high as the corresponding game in $H$. The cat's strategy is to simply play in $H$, and without loss of generality, the herder will never cut outside of $H$ since it is never as effective as cutting $e\in E(H)$. As such, the game will proceed as normal, and the cat will score $\cat(G,v)\geq \cat(H,v)$. The cat may have better strategies in $G$ than tying their hand behind their back, justifying the inequality.
\end{proof}
It follows from \Cref{subgraphMonotone} that the cat may at any point on its turn decide to do a virtual cut of an edge \emph{or} vertex, and decide to play on a subgraph of $G$. Similarly, it suffices to only consider connected graphs.

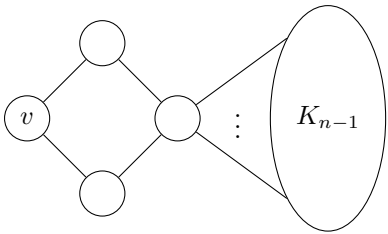
\begin{figure}
    \centering
    \begin{tikzpicture}
        \node[main node] (v) at (-1,0) {$v$};
        \node[main node] (u) at (0,1) {};
        \node[main node] (w) at (0,-1) {};
        \node[main node] (x) at (1,0) {};
        \node[ellipse, minimum height=3cm, draw] (e) at (3,0) {$K_{n-1}$};
        \draw (v) -- (u) -- (x) -- (w) -- (v);
        \draw (e.north west) -- (x) -- (e.south west);
        \node at (1.8,0) {$\vdots$};
    \end{tikzpicture}
    \caption{Values $\cat(G,v),\cat(G-e,v)$ may have arbitrarily large difference.}
    \label{fig:stallArbitrarilyBad}
\end{figure}

\section{Graph families}
We will now analyze some simple graph families' cat numbers, namely paths, cycles, stars, wheels, and planar graphs (asymptotically).

\begin{theorem}\label{thm:PathHerderNumber}
    For all $n\geq 2$, $\cut(P_n)=\lceil\log_2n\rceil$.
\end{theorem}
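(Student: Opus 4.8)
The plan is to produce matched strategies for the two players and conclude with \Cref{stratRules}. Throughout, \Cref{noPass} lets us assume the herder never cuts outside the component currently holding the cat, so after each cut the cat occupies a sub-path of $P_n$; the relevant state is then the length $m$ of that sub-path together with the distance $d$ of the cat from the nearer of its two ends (equivalently, the two side-sizes $d\le m-1-d$). I will also use the elementary identity $\lceil\log_2\lceil n/2\rceil\rceil=\lceil\log_2 n\rceil-1$, valid for every integer $n\ge 2$ (if $2^{k-1}<n\le 2^k$ then $2^{k-2}<\lceil n/2\rceil\le 2^{k-1}$).

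For the upper bound I will use the herder strategy ``cut the edge incident to the cat on the side containing at least half of the current vertices'' (and if the cat sits at an end of its path, cut its unique remaining edge, isolating it). From $P_n$, after one application of this rule the cat — whatever it does on reply — is confined to a path on at most $\lfloor(n-1)/2\rfloor+1=\lceil n/2\rceil$ vertices. An induction on $n$, with base case $\cat(P_1)=0=\lceil\log_2 1\rceil$, then gives $\cat(P_n,v)\le 1+\lceil\log_2\lceil n/2\rceil\rceil=\lceil\log_2 n\rceil$ for every starting vertex $v$, hence $\cat(P_n)\le\lceil\log_2 n\rceil$.

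For the lower bound the cat starts at a most central vertex and, after every cut, walks to a most central vertex of whatever path it now occupies. I will prove by induction on the length of the path that, with the herder to move and the cat at distance $d$ from the nearer end of its path, the cat survives at least $\lceil\log_2(2d+2)\rceil$ further cuts. The herder's sharpest reply is to cut the cat's incident edge on the longer side, trapping the cat at the end of a path on $d+1$ vertices; the cat re-centers to distance $\lfloor d/2\rfloor$, and since $\lceil\log_2(2\lfloor d/2\rfloor+2)\rceil=\lceil\log_2(d+1)\rceil$ the inductive hypothesis yields $1+\lceil\log_2(d+1)\rceil=\lceil\log_2(2d+2)\rceil$. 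Any other herder cut leaves the cat in a strictly longer sub-path, and after re-centering the cat sits at distance at least $\lfloor d/2\rfloor$, so \Cref{subgraphMonotone} and the inductive hypothesis give at least as much. Taking the cat's start at the centre of $P_n$ means $d=\lfloor(n-1)/2\rfloor$, and $\lceil\log_2(2\lfloor(n-1)/2\rfloor+2)\rceil=\lceil\log_2 n\rceil$; with \Cref{stratRules} this finishes the equality.

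The step I expect to be the main obstacle is the rule that the cat \emph{must} move: if the cat already occupies the unique central vertex of an odd-length path and the herder has just shaved one vertex off an end, the cat is forced a step away from the centre and appears to lose ground. The resolution is that this can only occur immediately after the herder spent a whole cut to delete a single vertex, so in the accounting that lost vertex is paid for by the extra cut; concretely one checks $1+\lceil\log_2(2d'+2)\rceil\ge\lceil\log_2(2d+2)\rceil$ for the slightly reduced distance $d'$, using that $x\mapsto\lceil\log_2 x\rceil$ grows by at most $1$ when $x$ doubles. It is precisely to make this robust that the lower-bound induction is framed in terms of the distance $d$ and the quantity $\lceil\log_2(2d+2)\rceil$ rather than as a naive ``the path at most halves each turn'' recursion, and a modest amount of care with floors, ceilings, and these $\lceil\log_2\rceil$ identities is unavoidable.
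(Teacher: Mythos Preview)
Your argument is correct and takes a route that differs from the paper's in two places. For the upper bound, the paper's herder always deletes the central edge of the current path (splitting into pieces of size $\lfloor n/2\rfloor$ and $\lceil n/2\rceil$, with a parity case split), whereas your herder deletes the edge incident to the cat on its longer side; your rule traps the cat on $P_{d+1}\subseteq P_{\lceil n/2\rceil}$ in one stroke and avoids the even/odd case analysis entirely. For the lower bound, both you and the paper have the cat re-center each turn, but the bookkeeping is different: the paper tracks the path length, splits on parity, and handles the ``cat already at the unique center'' problem by having the cat make a \emph{virtual cut} of a leaf before moving; you instead carry the single invariant $\lceil\log_2(2d+2)\rceil$ in the distance $d$ to the nearer end, and absorb the must-move penalty directly via the inequality $1+\lceil\log_2(2d'+2)\rceil\ge\lceil\log_2(2d+2)\rceil$. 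Your framing is more uniform (one invariant, no virtual cuts), at the cost of needing the identities $\lceil\log_2(2\lfloor d/2\rfloor+2)\rceil=\lceil\log_2(d+1)\rceil$ for $d\ge1$ and $\lceil\log_2(2\lfloor(n-1)/2\rfloor+2)\rceil=\lceil\log_2 n\rceil$ for $n\ge2$, both of which hold but deserve a line of justification. Two small things to tighten: state the base case $d=0$ (one cut, matching $\lceil\log_2 2\rceil=1$) explicitly, since your key identity fails there; and your appeal to \Cref{subgraphMonotone} in the non-sharpest case is really just monotonicity of $x\mapsto\lceil\log_2 x\rceil$ applied to $d'\ge\lfloor d/2\rfloor$ --- the claim $d'\ge\lfloor d/2\rfloor$ does hold for every herder cut (including the forced-move situation, where one checks $d'=d-1\ge\lfloor d/2\rfloor$ for $d\ge1$), but it merits the short verification you allude to rather than a citation.
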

    The proof of Theorem~\ref{thm:PathHerderNumber} follows immediately from the following two lemmas.
\begin{lemma}
    For all $n\geq 2$, $\cat(P_n)\leq \lceil \log_2n\rceil$. 
\end{lemma}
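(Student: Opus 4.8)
The plan is to exhibit an explicit herder strategy on $P_n$ that never needs more than $\lceil\log_2 n\rceil$ cuts against any cat, and then conclude via \cref{stratRules}. The strategy is a \emph{bisection} strategy: on the herder's turn the cat sits at some vertex $c$ of a connected component of the current graph, and since every component obtained from $P_n$ by deleting edges is again a path, that component is a path $P_m$. The herder deletes an edge of $P_m$ incident to $c$; if $c$ is an endpoint this isolates the cat, and otherwise, writing $c$ as the $k$-th vertex of that path, the two possible cuts leave the cat in a sub-path on $k$ or on $m-k+1$ vertices. Because $k+(m-k+1)=m+1$, the herder can always choose the cut that leaves the cat in a path on at most $\lceil m/2\rceil$ vertices. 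The cat then moves (if it still can) to an arbitrary vertex of this smaller path, and play repeats.

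The key step is to turn this into a clean recursion. I would let $b_m$ be the worst-case number of cuts this strategy uses when the cat is somewhere in a path component on $m$ vertices and it is the herder's turn; then $b_1=0$, and the bisection cut gives $b_m\le 1+b_{\lceil m/2\rceil}$ for all $m\ge 2$ — this covers $m=2$ as well, where $\lceil m/2\rceil=1$ and a single cut finishes. Since the cat commits to a start vertex before any cut is made, $\cat(P_n,v)\le b_n$ for every $v$, and hence $\cat(P_n)\le b_n$.

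Finally I would prove $b_m\le\lceil\log_2 m\rceil$ by strong induction on $m$: the base case $m=1$ is immediate, and for $m\ge 2$ we have $1\le\lceil m/2\rceil<m$, so the inductive hypothesis together with $b_m\le 1+b_{\lceil m/2\rceil}$ reduces everything to the inequality $\lceil\log_2\lceil m/2\rceil\rceil\le\lceil\log_2 m\rceil-1$. That inequality follows by setting $t=\lceil\log_2 m\rceil$, noting $m\le 2^t$ hence $\lceil m/2\rceil\le 2^{t-1}$, and then taking logarithms and ceilings. Applying \cref{stratRules} with $m=n$ then gives the lemma.

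I expect the argument to be essentially routine; the only places that need care are two bookkeeping points — checking that the reduction to $\lceil m/2\rceil$ vertices is genuinely available from every cat position (using that the cat has a neighbour whenever $m\ge 2$, and that the endpoint case is strictly better), and verifying that the logarithmic recursion bottoms out at $m=2$ rather than $m=1$, so that the base of the induction lines up correctly.
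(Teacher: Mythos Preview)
Your proposal is correct and follows essentially the same approach as the paper: exhibit a herder bisection strategy that leaves the cat in a path on at most $\lceil m/2\rceil$ vertices after each cut, then bound the number of cuts by $\lceil\log_2 n\rceil$ via strong induction. The only tactical difference is that the paper's herder cuts the central edge of the current path (so the cut depends only on which half the cat occupies), whereas your herder cuts an edge incident to the cat; both choices yield the same recursion $b_m\le 1+b_{\lceil m/2\rceil}$ and the same inductive bound.
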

\begin{proof}
    We show this result by induction on $n$. For $n=2$, it is easy to see that $P_2$ has one edge and so after a single cut, the cat is isolated, and thus $\cat(P_2)=1=\lceil\log_22\rceil$.

    Otherwise, suppose $n\geq 3$ and for all $2\leq n'<n$, we have that $\cat(P_n)\leq \lceil\log_2n\rceil$. Let the cat choose their starting location. 
    
    \par If $n$ is even, then there is a central edge, which the herder cuts, leaving the cat on $P_{n/2}$. Since $n\geq 4$, we know that $2\leq n/2<n$, so that induction applies, and inductively the herder can isolate the cat in $1+\cat(P_{n/2})\leq 1+\lceil\log_2n/2\rceil=\lceil\log_2n\rceil$.

    \par If $n$ is odd and vertex $v$ is the center vertex, then the herder cuts an edge incident to $v$ (either edge if the cat is on $v$, or the edge further from the cat otherwise). This leaves the cat on $P_{(n+1)/2}$. Noting that $n\geq 3$ implies that $2\leq (n+1)/2<n$, the herder can inductively isolate the cat in at most $\lceil\log_2(n+1)/2\rceil$ moves. Overall, the herder isolates the cat in fewer than $1+\cat(P_{(n+1)/2})\leq 1+\lceil\log_2\frac{n+1}{2}\rceil=\lceil\log_2(n+1)\rceil=\lceil\log_2n\rceil$. We obtain the final inequality by noting that ($\lceil\log_2(n+1)\rceil>\lceil\log_2n\rceil$ with $n$ an integer) implies that ($n=2^k$), violating $n$ odd.
\end{proof}

\begin{lemma}
    For all $n\geq 2$, $\cat(P_n)\geq \lceil\log_2n\rceil$.
\end{lemma}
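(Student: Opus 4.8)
The plan is to establish the matching lower bound by induction on $n$, mirroring the recursion used for the upper bound. Since $\cat(P_n)=\max_{v}\cat(P_n,v)$, it suffices to exhibit one good starting vertex: let $c$ be a centre vertex of $P_n$ (for even $n$, either of the two middle vertices), and show $\cat(P_n,c)\ge\lceil\log_2 n\rceil$. The base case $n=2$ is immediate, since $\cat(P_2)=1=\lceil\log_2 2\rceil$.

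The engine of the induction is the inequality $\cat(P_n,c)\ge 1+\cat\bigl(P_{\lceil n/2\rceil}\bigr)$ for $n\ge 3$. To prove it, look at the herder's first cut: it deletes one of the $n-1$ edges of $P_n$ and so breaks $P_n$ into two subpaths, and a short case analysis on which edge is removed, using that $c$ is central, shows that the subpath $P'$ containing $c$ always has at least $\lceil n/2\rceil$ vertices. The cat now moves inside $P'$ to the vertex that maximises its remaining value there, and henceforth voluntarily confines itself to the component $P'$ (as in the proof of \Cref{dumbLower}); by \Cref{noPass} and the remark following \Cref{subgraphMonotone}, herder cuts outside the cat's component never help, so from this point on the cat secures as much as it would in the standalone path $P'$. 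By \Cref{subgraphMonotone}, $\cat(P')\ge\cat(P_{\lceil n/2\rceil})$, which together with the initial cut gives the claimed inequality.

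Feeding this into the inductive hypothesis yields $\cat(P_n,c)\ge 1+\lceil\log_2\lceil n/2\rceil\rceil$, so it remains to verify the identity $\lceil\log_2\lceil n/2\rceil\rceil=\lceil\log_2 n\rceil-1$ for $n\ge2$. Writing $k=\lceil\log_2 n\rceil$, so that $2^{k-1}<n\le 2^k$, one gets $2^{k-2}<n/2\le\lceil n/2\rceil\le 2^{k-1}$, which is exactly what is needed. Hence $\cat(P_n,c)\ge\lceil\log_2 n\rceil$, and the induction closes.

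The step I expect to demand the most care is the reduction $\cat(P_n,c)\ge 1+\cat(P_{\lceil n/2\rceil})$. The subtlety is that after the first cut the cat must make a genuine move and may be sitting exactly at the centre of the (now disconnected) component $P'$, so it cannot simply ``stay'' at the optimal vertex; one has to check that a best non-central vertex it can reach is still good enough. This works out because whenever $|P'|$ is as small as $\lceil n/2\rceil$ the vertex $c$ is forced to be an endpoint of $P'$ (so the cat moves freely to the centre), and whenever $|P'|$ is larger the extra length absorbs the loss. Tracking the exact centre for even $n$, and discharging the herder's passing moves, are the other routine-but-fiddly points; everything else is the induction and the $\lceil\log_2\rceil$ bookkeeping.
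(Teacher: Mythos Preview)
Your plan is correct and essentially the same as the paper's argument, recast as an induction on values rather than as an explicit cat strategy. The paper has the cat always move to the middle-most unoccupied vertex and maintains the invariant that the cat ends each turn at the centre of a path of at least $\lceil n/2\rceil$ vertices; your inequality $\cat(P_n,c)\ge 1+\cat(P_{\lceil n/2\rceil})$ captures exactly the same recursion. The subtle case you flag --- the cat sitting at the unique centre of $P'$ and being forced to move --- is precisely the case the paper handles by a \emph{virtual cut} of a leaf: the cat voluntarily drops one endpoint of $P'$, leaving an even path whose two centres include the cat's current position, so the cat moves to the other one. Your observation that ``the extra length absorbs the loss'' is correct and is the reason this trick works (since in that case $|P'|>\lceil n/2\rceil$, so $|P'|-1\ge\lceil n/2\rceil$ and the induction still fires); you will want to make that step explicit, and the cleanest way is the same virtual-leaf-cut the paper uses.
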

\begin{proof}
    Let $n\geq 2$. It suffices to show that there exists a cat strategy forcing $\lceil\log_2n\rceil$ cuts on $P_n$. Let $C$ be the cat strategy where the cat always moves to a middle-most unoccupied vertex in the path. If the cat is on an even length path and starts in the middle (i.e. they cannot move to a middle-most unoccupied vertex), they will make a virtual cut of a leaf, and play on the resulting path. Note that this virtual cut makes it an odd length path with two middle-most vertices, and thus the normal strategy will apply.
    
    
    Note that inductively, every cat turn ends with the cat in the middle of a path of length $n$ (from the cat's perspective). We will show that regardless of which edge is deleted, the cat will end on a path of length at least $\frac{n}{2}$. For the base cases, when $n=1$, the cat loses immediately, and when $n=2$, the cat indeed moves to the other vertex and lasts $\lceil n/2\rceil = 1$ cuts. Inductively, let the cat be on a middle vertex of $P_n$ for $n\geq 3$. Label the vertices in order $v_1,\dots, v_n$. We split on the parity of $n$.
    
    If $n=2k$, then either of $v_k,v_{k+1}$ is a middle vertex that the cat may be on. Without loss of generality, say $v_k$. Suppose edge $\{v_i,v_{i+1}\}$ is deleted. If $i<k$, then vertices $v_k,\dots, v_{2k}$ are still connected and the cat can freely move to the middle of a path of at least $k+1$ vertices. Otherwise, if $i\geq k$ and $i\neq 2k-1$, then the cat has vertices $v_1,\dots, v_k$, and can freely move to the middle of a path of at least $k$ vertices. The final case is when edge $\{v_{2k-1},v_{2k}\}$ is deleted. Then the cat will (virtually) delete the edge $\{v_{2k-2},v_{2k-1}\}$, and move to $v_{k-1}$ as the center of $P_{2k-2}$. Thus the cat has scored at least $\lceil n/2\rceil = k$ vertices in any of these cases.
    
    If $n=2k+1$, then the cat starts on $v_{k+1}$. Suppose the herder deletes edge $\{v_i,v_{i+1}\}$. If $i< k$, then the cat will have at least $k+1$ vertices ($v_{k+1},\dots, v_{2k+1}$) to move on, and the cat is not presently in the center of the resulting path. If $i>k$, then the cat has at least $k+1$ vertices ($v_1,\dots, v_{k+1}$) to move on. In either case, since $v_k$ was where we started and some edge was deleted, $v_k$ is no longer the middle vertex, and the cat does have some new center vertex to move to. Thus the cat may score at least $\lceil n/2\rceil = k+1$ vertices in either of these cases.
    
    This completes the argument that the cat may score at least $\lceil n/2\rceil$ vertices after a single cut following a binary search strategy. It follows inductively that binary search scores at least $\lceil \log_2n\rceil$ cuts. Since a cat strategy exists that achieves the desired bound, we get that $\cat(P_n)\geq \lceil \log_2n\rceil$ by \Cref{stratRules}.
\end{proof}

Suppose $G$ contains a subgraph $H$, and $H$ belongs to a graph family with a known analysis. Then $\cat(H,v)\leq \cat(G,v)$ provides a lower bound for some vertices $v\in V(G)$. In particular, we may consider all leaf-to-leaf paths in trees, and obtain a lower bound for trees' cat numbers. This is not generally tight; see a minimal counterexample in \Cref{fig:treeBoundNotTight}.

\begin{figure}
    \centering
    \begin{subfigure}[c]{\textwidth}
        \centering
        \begin{tikzpicture}
            \node[main node] (a) at (0,1) {$1$};
            \node[main node] (b4) at (0,0) {$4$};
            \node[main node] (b5) at (1,0) {$3$};
            \node[main node] (b6) at (2,0) {$2$};
            \node[main node] (b7) at (3,0) {$1$};
            \node[main node] (b3) at (-1,0) {$3$};
            \node[main node] (b2) at (-2,0) {$2$};
            \node[main node] (b1) at (-3,0) {$1$};
            \draw (b1) -- (b2) -- (b3) -- (b4) -- (b5) -- (b6) -- (b7);
            \draw (b4) -- (a);
        \end{tikzpicture}
        \caption{Actual cat values}
        \label{fig:treeLowBoundNotTightA}
    \end{subfigure}
    \begin{subfigure}[c]{\textwidth}
        \centering
        \begin{tikzpicture}
            \node[main node] (a) at (0,1) {$1$};
            \node[main node] (b4) at (0,0) {$3$};
            \node[main node] (b5) at (1,0) {$3$};
            \node[main node] (b6) at (2,0) {$2$};
            \node[main node] (b7) at (3,0) {$1$};
            \node[main node] (b3) at (-1,0) {$3$};
            \node[main node] (b2) at (-2,0) {$2$};
            \node[main node] (b1) at (-3,0) {$1$};
            \draw (b1) -- (b2) -- (b3) -- (b4) -- (b5) -- (b6) -- (b7);
            \draw (b4) -- (a);
        \end{tikzpicture}
        \caption{Lower bounds from all paths}
        \label{fig:treeLowBoundNotTightB}
    \end{subfigure}
    \caption{An example of a tree, where each vertex contains (a) the cat number of that vertex in the tree, and (b) the lower bound of that vertex in some leaf-to-leaf subgraph containing that vertex. The lower bounds on the herder number from leaf-to-leaf paths are not tight.}
    \label{fig:treeBoundNotTight}
\end{figure}
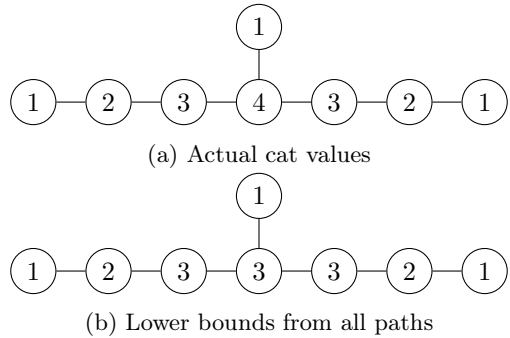
\begin{theorem}
    If $k\geq 2$, then $\cat(C_{2k+1})=\cat(C_{2k})=\lceil \log_22k\rceil+1$. Similarly, $\cat(C_3)=2$.
\end{theorem}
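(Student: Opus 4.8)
\section*{Proof proposal}

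The plan is to reduce the cycle game to the (already analysed) path game. On $C_n$ the herder moves first, and the cat does not relocate while the herder cuts; so after the opening cut the board is a copy of $P_n$ with the cat sitting at whatever vertex its starting position has become, and as the cut edge ranges over $E(C_n)$ this landing vertex ranges over \emph{every} vertex of $P_n$. Writing
\[
  g(w)\;=\;\max\bigl\{\cat(P_n,u)\;:\;u\in V(P_n),\ u\neq w\bigr\}
\]
for the value of ``$P_n$ with the cat still to move from $w$'', I would first establish, via \Cref{stratRules} and \Cref{noPass}, that $\cat(C_n)=1+\min_{w\in V(P_n)}g(w)$ (in particular $\cat(C_n,v)$ is the same for all $v$). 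For ``$\le$'': the herder chooses the opening cut placing the cat at a $g$-minimizing vertex $w^\star$ (possible since the landing vertex can be any vertex of $P_n$), after which whatever $u\neq w^\star$ the cat relocates to, the residual play is an untainted $P_n$-game of value $\cat(P_n,u)\le g(w^\star)$, and \Cref{noPass} lets us assume the herder stays in the cat's component throughout. For ``$\ge$'': after \emph{any} opening cut leaving the cat at $w$, the cat walks to a $g$-attaining vertex $u\neq w$ (reachable since $P_n$ is connected) and then plays optimally, lasting $1+g(w)\ge 1+\min_w g(w)$ cuts.

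It then remains to compute $\min_w g(w)$, for which I would extract three facts from \Cref{thm:PathHerderNumber} and \Cref{subgraphMonotone}. \textbf{(i)} A most-central vertex of $P_m$ attains $\cat(P_m)=\lceil\log_2 m\rceil$ (the binary-search cat strategy of \Cref{thm:PathHerderNumber} may be started there, and the matching upper bound caps it). \textbf{(ii)} In $P_{2k+1}$ every vertex $v_i$ other than the centre $v_{k+1}$ has $\cat(P_{2k+1},v_i)\le\lceil\log_2 2k\rceil$: the herder cuts the edge separating $v_i$ from its longer side, confining the cat to a subpath on $\min(i,2k+2-i)\le k$ vertices, so $\cat(P_{2k+1},v_i)\le 1+\lceil\log_2 k\rceil=\lceil\log_2 2k\rceil$. \textbf{(iii)} The two neighbours $v_k,v_{k+2}$ of the centre of $P_{2k+1}$ satisfy $\cat(P_{2k+1},v_j)\ge\lceil\log_2 2k\rceil$: deleting a suitable endpoint of $P_{2k+1}$ leaves a subgraph $P_{2k}$ in which $v_k$ (resp.\ $v_{k+2}$) is central, so \Cref{subgraphMonotone} and (i) apply.

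Combining these: for $C_{2k}$, the reflection $v_i\leftrightarrow v_{2k+1-i}$ of $P_{2k}$ has no fixed vertex, so $P_{2k}$ has at least two $\cat$-maximizers; hence $g(w)=\cat(P_{2k})=\lceil\log_2 2k\rceil$ for every $w$, and $\cat(C_{2k})=1+\lceil\log_2 2k\rceil$. For $C_{2k+1}$, fact (ii) gives $g(v_{k+1})\le\lceil\log_2 2k\rceil$, while fact (iii) gives $g(w)\ge\lceil\log_2 2k\rceil$ for every $w$ (one of $v_k,v_{k+2}$ always differs from $w$); hence $\min_w g(w)=\lceil\log_2 2k\rceil$ and $\cat(C_{2k+1})=1+\lceil\log_2 2k\rceil$. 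Finally $C_3$ is the $k=1$ instance of the odd case ($C_2$ being degenerate and excluded): the herder's opening cut of the edge missing the cat puts it at the centre of $P_3$, from which one further cut finishes, so $\cat(C_3)=2$.

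I expect the main obstacle to be making the reduction fully rigorous --- in particular, arguing that after the opening cut the residual play is \emph{literally} a fresh $P_n$-game (so its value from the cat's chosen vertex $u$ is exactly $\cat(P_n,u)$, with no residual advantage to either side) and bookkeeping the ``$1+$'' correctly. Within the supporting facts, (ii) is where the delicacy lies: the identity $\lceil\log_2 2k\rceil=1+\lceil\log_2 k\rceil$ is precisely what lets the herder's ``force the cat onto the centre of $P_n$ first'' opening shave off the extra cut in exactly the cases ($2k$ a power of two) where the naive estimate $\cat(C_n)\le 1+\cat(P_n)$ fails to be tight.
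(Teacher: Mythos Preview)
Your proposal is correct and follows essentially the same approach as the paper: both reduce the cycle to a path via the herder's opening cut, identify the herder's best opening on $C_{2k+1}$ as the edge opposite the cat (placing it at the centre of $P_{2k+1}$), and have the cat respond by moving to a near-centre vertex of the resulting path. Your packaging via $g(w)=\max_{u\neq w}\cat(P_n,u)$ and the supporting facts (ii)--(iii) is a slightly cleaner formalism than the paper's direct strategy description, and your second herder cut (incident to the cat's new position on its longer side) differs cosmetically from the paper's (the first edge the cat traversed from the centre), but both confine the cat to $P_{\leq k}$ and yield the same bound.
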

\begin{proof}
    First, odd cycles have $\cat(C_{2k+1})\leq \lceil \log_22k\rceil+1$. To show this, we need a herder strategy. Set $H$ to be the strategy as follows: for the first deletion, delete the edge diametrically opposed to the cat (existence follows by $C_n$ being an odd cycle). The cat is now on the center vertex of a path $P_{2k+1}$, and must now move off of it. The herder deletes the first edge from the path that the cat just moved along. This locks the cat into $P_k$ in $2$ cuts, and continuing the game will take at most $\cat(P_k)=\lceil \log_2k\rceil$ cuts. Overall, then, the herder takes at most $2+\lceil\log_2k\rceil=1+\lceil 1+\log_2k\rceil = 1+\lceil\log_22k\rceil$ cuts.
    
    We next show that $\cat(C_{2k+1})\geq \lceil \log_22k\rceil+1$. To show this, we need a cat strategy. For the first move, choose any vertex $v\in V(C_{2k+1})$. After this, the cat plays the path strategy on the resulting path. The only change from the previous analysis is that the assumption that we always end our turns on the center of a path is no longer true. Rather, after the first cut, we will be somewhere on the path. If we are not in the center, we may then move to the center, and then play as normal on $P_{2k+1}$, and we end with a score of $\lceil \log_22k+1\rceil +1$ cuts, which is at least the desired bound of $\lceil\log_22k\rceil +1$. If we are in the center, though, we follow our previous strategy and do a virtual cut of a leaf, leaving us on one of the two center vertices of $P_{2k}$. Then we will move to the other center vertex and continue play on $P_{2k}$. In this case, we score at least $\cat(P_{2k})+1=\lceil\log_22k\rceil+1$ cuts, and thus we establish the desired bound.
    

    Next, we show that even cycles have $\cat(C_{2k})=\lceil\log_22k\rceil+1$. Consider an optimal game. Let the cat start at $v\in V(C_{2k})$ (since cycles are vertex-transitive this is without loss of generality). After the first optimal cut, the cat may play their already found optimal path strategy on $P_{2k}$ (since there are two middle vertices, at least one must be possible to move to). The length of this optimal game is then $\cat(C_{2k})=1+\cat(P_{2k})=\lceil\log_22k\rceil + 1$. 
\end{proof}

Let $S_i$ denote the star graph, which has a vertex with $i$ additional leaves appended to this vertex. That is, $S_i=K_{1,i}$.
\begin{lemma}
    If $n\geq 2$, then $\cat(S_n)=2$. For small stars, $\cat(S_0)=\cat(K_1)=0$ and $\cat(S_1)=\cat(P_2)=1$. 
\end{lemma}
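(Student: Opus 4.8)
The plan is to dispatch the three small cases by inspection and then, for $n\ge 2$, prove the matching bounds $\cat(S_n)\le 2$ and $\cat(S_n)\ge 2$ by exhibiting explicit herder and cat strategies and appealing to \Cref{stratRules}. Write $c$ for the centre of $S_n$ and $v_1,\dots,v_n$ for the leaves. The first observation is that the worst starting vertex is $c$: a leaf start is bad for the cat because the herder moves first.

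For the small cases, $S_0=K_1$ has no edges, so the cat is isolated before any cut and $\cat(S_0)=0$; and $S_1=K_2=P_2$, so $\cat(S_1)=1$ by \Cref{thm:PathHerderNumber}. Neither needs argument. For $n\ge 2$ I would first handle a leaf start: if the cat begins on $v_i$, the herder's opening move is to cut $cv_i$, after which the cat (whose turn it now is) has no available move and is captured in a single cut; since $S_n$ has an edge, at least one cut is always needed, so $\cat(S_n,v_i)=1$. Next, the upper bound for the centre start: the herder cuts any edge $cv_1$; because $n\ge 2$ there remains a leaf $v_i$ with $i\ge 2$, and the cat is forced onto it — crucially, after one cut the graph is a forest, so there is no alternative path back to the severed $v_1$. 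The cat now sits on $v_i$, which has a single incident edge $cv_i$; the herder cuts it and the game ends after two cuts, giving $\cat(S_n,c)\le 2$.

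For the lower bound I would let the cat start at $c$: whatever edge $cv_j$ the herder removes, the cat is still on $c$, now inside a copy of $S_{n-1}$, which (as $n\ge 2$) still contains a leaf; the cat moves to that leaf and is not yet isolated, so a second cut is forced, and by \Cref{stratRules} this yields $\cat(S_n,c)\ge 2$ (alternatively, $P_3$ is a subgraph of $S_n$ for $n\ge 2$, so \Cref{subgraphMonotone} and $\cat(P_3)=\lceil\log_2 3\rceil=2$ give the same bound). Combining, $\cat(S_n,c)=2$, and since every leaf scores $1$, taking the maximum over starting vertices gives $\cat(S_n)=2$.

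There is no genuinely hard step; the points that need care are (i) remembering that the herder moves first, which is exactly why a leaf start costs only one cut, (ii) using the fact that after the first cut the (former tree) $S_n$ becomes a forest, so the cat cannot return to an amputated leaf and is genuinely forced into the surviving star, and (iii) phrasing the two strategies so that \Cref{stratRules} applies cleanly in each direction.
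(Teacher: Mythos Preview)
Your proof is correct and follows essentially the same approach as the paper: an explicit herder strategy for the upper bound (cut the incident edge if the cat is on a leaf, otherwise cut any edge and then the edge the cat just traversed) and an explicit cat strategy for the lower bound (start at the centre, then move to a surviving leaf). Your write-up is in fact more careful than the paper's, handling the small cases and the leaf-start case explicitly and noting the alternative lower bound via $P_3\subseteq S_n$ and \Cref{subgraphMonotone}.
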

    
\begin{proof}
    For the herder strategy, if the cat starts on a leaf, cut the edge incident and win in one cut. Otherwise, cut an arbitrary edge. Then the cat will move to a leaf and the herder will cut the edge just traversed, winning in at most $2$ cuts.
    
    For the cat, start in the middle, and move to any available leaf on the second move. This shows that $\cat(S_n)=2$.
\end{proof}

\begin{lemma}
    If an optimal game of Cat Herding is played on $G$ with $\cat(G)\geq 2$, then the cat's component after the penultimate herder turn is a star, and the cat's unique optimal move is to move to the center of the star.
\end{lemma}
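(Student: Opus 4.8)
The plan is to read off the structure of the last two herder cuts by tracking the number of cuts still needed under optimal play (the ``value'' of a position). Since the cat starts at an optimal vertex and both players play optimally, the game lasts exactly $m := \cat(G)\ge 2$ cuts; write $e_1,\dots,e_m$ for the cuts in order, so $e_{m-1}$ is the penultimate one. After $e_{m-1}$ is played the cat sits at some vertex $p$ of its component $H$, and it is the cat's turn. The first step is to observe that the value of this position is exactly $1$: along an optimal play the value of every intermediate position equals the number of cuts still to come, and after $e_{m-1}$ only $e_m$ remains. Equivalently, if the cat could reach a position of value $\ge 2$ it would do so, forcing a game of length $\ge m+1$, a contradiction.

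Next I would convert ``value $1$, cat to move at $p$ in $H$'' into combinatorial structure. Once the cat is confined to a component $H$ it can never leave $H$, and by \cref{noPass} a herder cut outside $H$ (a passing move) is never better for the herder than a cut inside $H$; hence the value of a position with the herder to move and the cat at $w\in V(H)$ equals $\cat(H,w)$. Consequently the value at ``$p$, cat to move'' equals $\max\{\cat(H,w): w\in V(H)\setminus\{p\}\}$ (the cat is not yet captured, so $H$ is connected with $|V(H)|\ge 2$ and the cat has at least one legal move). Therefore $\max_{w\ne p}\cat(H,w)=1$, and since any $w\ne p$ has a neighbour in $H$ we get $\cat(H,w)=1$ for \emph{every} $w\in V(H)\setminus\{p\}$. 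I would then record the elementary equivalence, for connected $H$ on $\ge 2$ vertices, that $\cat(H,w)=1$ iff $\deg_H(w)=1$: a single cut at $w$ isolates a leaf, whereas if $\deg_H(w)\ge 2$ no single cut isolates $w$. Hence every vertex of $H$ other than $p$ is a leaf.

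The remaining step is purely graph-theoretic: a connected graph in which all but one vertex are leaves is a star $S_t$ with the exceptional vertex as its centre (when $|V(H)|\ge 3$), or is $K_2=S_1$ (when $|V(H)|=2$), since two mutually adjacent leaves would form an isolated $K_2$. So $H$ is a star and the cat stands at its centre; its only legal move is to a leaf, and since from any leaf the herder finishes in one cut, every such move is optimal. This is the substance of the claim, and the phrasing about ``moving to the centre of the star'' is the one-cut-earlier shadow of it — the cat's move immediately before $e_{m-1}$ is what put it on the vertex that $e_{m-1}$ turns into the centre of the star — which I would pin down by applying the same value-counting one cut earlier.

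The only genuinely non-routine point is the very first one: justifying that the value after the penultimate cut is exactly $1$, i.e.\ that an optimal cat has no longer continuation available at that moment. That is precisely where global optimality of the play is used, and it deserves to be stated explicitly rather than waved through; once it is in hand, the rest is the ``leaf $\iff$ value $1$'' observation plus the trivial classification of almost-all-leaves connected graphs. A secondary technical point to handle with care is the identification ``value with the herder to move $=\cat(H,\cdot)$'' after the graph has disconnected, which rests on \cref{noPass} (with \cref{subgraphMonotone} justifying the reduction to a single component).
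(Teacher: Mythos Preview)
Your argument is correct and is essentially the paper's, recast in value-theoretic language. The paper argues by backtracking: the final cut isolates the cat, so just before it the cat sat on a leaf; an optimal cat would never move to a leaf if a non-leaf were available (since a non-leaf cannot be isolated in one cut), hence every vertex the cat could have moved to was a leaf, forcing the component to be a star with the cat at its centre. Your version formalises ``non-leaf $\Rightarrow$ survives at least two more cuts'' as the equivalence $\cat(H,w)=1 \iff \deg_H(w)=1$ and reads the same conclusion off the position value $\max_{w\ne p}\cat(H,w)=1$, invoking \cref{noPass} to justify that the herder-to-move value really is $\cat(H,\cdot)$. The extra scaffolding is sound but not needed: the paper's two-line backtracking captures the same content.

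You are also right to flag the wording. Both your argument and the paper's own proof establish that after the penultimate cut the cat \emph{is already at} the centre of a star (and is therefore forced to move to a leaf); the paper's proof in fact concludes with ``Thus the cat is in the center of a star,'' which matches your reading rather than the lemma's ``move to the center'' phrasing. Your attempt to rescue that phrasing by looking one cut earlier is reasonable but goes beyond what the paper itself proves.
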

\begin{proof}
    By definition, the game ends when the cat is isolated. Backtracking one turn, the cat must have been on a leaf, and the herder deleted the edge incident. Since a move to a leaf guarantees loss after a single more turn, and any other move cannot result in the game ending in a single turn, the cat will not move to a leaf unless necessary. It follows that the cat must have had no other legal moves than to move to a leaf. Thus the cat is in the center of a star.
\end{proof}

Note that paths have $\cat(T)\approx \log |V(G)|$, while stars have $\cat(T)\leq 2$. We have found that the cat number of trees can vary widely.

Recall $W_n$ is a wheel with $n-1$ vertices $v_1,\dots, v_{n-1}$ on a cycle and one vertex $v_0$ connected to all other vertices. 
\begin{theorem}
    For $n\geq 3$, $\cat(W_n)=n+1$.
\end{theorem}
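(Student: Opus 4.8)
The plan is to prove the two inequalities $\cat(W_n)\le n+1$ and $\cat(W_n)\ge n+1$ separately. For the upper bound I would describe a herder strategy that ignores the cat's moves for as long as possible: on turns $1,\dots,n-1$ the herder deletes the $n-1$ rim edges $v_1v_2,v_2v_3,\dots,v_{n-1}v_1$, in any order. During this phase no spoke is ever cut, and since every vertex of $W_n$ (the hub and each rim vertex) is incident with a spoke, no vertex is ever isolated, so the cat always has a legal move and the game necessarily reaches the end of turn $n-1$. At that moment every rim edge is gone and every spoke survives, so the board is precisely the star $S_{n-1}$ with the cat on some vertex; since $\cat(S_{n-1})=2$ (using $n-1\ge 2$), the herder finishes in at most two more cuts. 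Hence $\cat(W_n,v)\le(n-1)+2=n+1$ for every start vertex $v$, so $\cat(W_n)\le n+1$.

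For the lower bound I would exhibit a cat strategy forcing at least $n+1$ cuts, beginning with the cat on the hub. The cat keeps the hub reachable and plays the path "binary-search" strategy on whichever rim-arc is currently richest in spokes: when it is forced off the hub it moves to (near) the centre of the largest rim-arc that still contains an intact spoke, and when it sits on a rim vertex it either relocates through the hub to a richer arc or continues the binary search inside its arc. Since a single cut lowers the spoke-count of any one arc by only one (spoke cut) or at worst halves it (rim cut), the cat can continue in this way as long as some arc carries at least two intact spokes; it loses that option only once every arc carries at most one intact spoke, and at that point the number of surviving spokes is at most the number of arcs, hence at most the number of rim edges already deleted, so the number of cuts so far is at least $\bigl((n-1)-\#\{\text{surviving spokes}\}\bigr)+\#\{\text{rim cuts}\}\ge n-1$. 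From there the cat still survives at least two more turns — either as the centre of the star it steers into, or because a trapped rim-arc $C_m$ or $P_m$ has $\cat(C_m)\ge 2$, $\cat(P_m)\ge 2$ for $m\ge 3$ — giving $\ge n+1$ total.

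The step I expect to be the real work is the lower-bound accounting, where one must carry an invariant tracking the numbers of cut spokes and cut rim edges together with the cat's position ("the cat is on the hub, or one rim-step from it, inside an arc large relative to the number of cuts made so far") and check it survives every herder reply, not just the canonical "rim edges first" one. The most delicate point is extracting the final $+1$: a careless argument only gives $\cat(W_n)\ge n$, because if the cat is sitting on the hub at the instant the last rim edge is severed it is driven onto a leaf and the herder needs just one more cut. The fix is an endgame rule forcing the cat, once only one rim edge remains, to end every move on a rim (leaf-to-be) vertex rather than on the hub, so that after the herder removes the last rim edge the cat steps onto the hub of the resulting $S_{n-1}$ on the herder's turn and thereby costs two further cuts, exactly as in the star; verifying that the cat can always obey this rule (it has enough mobility inside its arc, or can relocate via the hub) is the crux.
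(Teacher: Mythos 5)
Your upper bound is correct and is exactly the paper's: cut the $n-1$ rim edges to reduce to a star, then finish in $2$ more cuts. The lower bound, however, has a genuine gap --- one you partly acknowledge yourself by deferring ``the crux'' (that the cat can always maintain hub-access and obey the endgame rule) to unverified future work. Two concrete problems. First, your arithmetic $\bigl((n-1)-\#\{\text{surviving spokes}\}\bigr)+\#\{\text{rim cuts}\}\geq n-1$ rests on ``number of surviving spokes $\leq$ number of arcs $\leq$ number of rim cuts,'' which fails when the herder cuts no rim edges at all: then there is one arc (the intact rim cycle), so your stopping criterion is met after only $n-2$ spoke cuts, and $r+s=n-2<n-1$. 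That case is rescuable (the cat is on $C_{n-1}$ with one pendant spoke and can score $3$ more), but it needs a separate argument your accounting does not supply. Second, the closing ``$+2$'' is asserted rather than derived from an invariant: nothing in your bookkeeping prevents the cat's arc, at the critical moment, from being a $P_2$ with both spokes cut (worth only $1$ more cut), and the ``endgame rule'' you propose to force the cat onto a leaf-to-be vertex is precisely the mobility claim you have not checked against adversarial herder play (e.g.\ the herder interleaving spoke cuts at the cat's current vertex with rim cuts isolating it from the hub).

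For comparison, the paper resolves both difficulties with a single device: a \emph{safe zone} is a chordless cycle through the hub, there are $n-1$ of them initially, each cut destroys at most one (with one exceptional cut, handled separately), and the cat's invariant is to always end its turn on a \emph{non-central vertex of a safe zone}. Sitting on a cycle gives the cat two edge-disjoint escape routes, so mobility and the count of $n-1$ survivable cuts come packaged together, and the residual structure when the last safe zone dies (a path of at least $3$ vertices, or a reachable $P_3$ of spokes) is what certifies the final two cuts. If you want to salvage your arc-and-spoke ledger, you would need to (i) add the $r=0$ cycle endgame as a separate case, and (ii) replace the informal ``endgame rule'' with an invariant strong enough to guarantee the cat's terminal component has cat number at least $2$.
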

\begin{proof}
    For the herder strategy herding the cat in at most $n+1$ cuts, cut all $n-1$ edges in the cycle. The resulting graph is a star, which can be won in $2$ cuts by making a stalling move, the cat moves to a leaf, and the herder isolates the leaf. As a result, the herder will win in $n-1+2=n+1$ cuts.
    
    For the cat's strategy, we define a `safe zone' as any cycle containing the central vertex $v_0$ and having no internal chords. Note that the game starts with $n-1$ safe zones, one for each edge in the cycle  combined with the spokes to the center. 
    
    Note that most cuts either keep the number of safe zones constant or decreases them by one. Suppose edge $e$ is cut. If $e$ is a spoke, it may merge two safe zones (if there is at least $2$ other spokes), destroy a safe zone, or delete a leaf, all at most decreasing safe zones by $1$. If $e$ is an outer-cycle edge, at most one safe zone cycle or leaf is deleted. The sole exception is on cutting the next-to-last spoke, in which case we move from $2$ to $0$ safe zones. This case will be evaluated separately. The cat's strategy is to always move to a safe zone's non-central vertex. This is generally possible since all safe zones have at least 2 edge-disjoint paths to all other safe zones (both have at least $2$ edges to the shared central vertex), and each safe zone has at least $2$ candidate vertices (cycles have at least $3$ vertices, at most one may be central). The cat can employ this strategy for $n-1$ cuts at least until all safe zones are cut, or in the exceptional case have $n-2$ spokes cut to remove all safe zones.
    
    When the last safe zone is cut, in the exceptional case where $n-2$ spokes are cut, we have $C_n$ with a pendant leaf, and the cat has scored $n-2$ already. The cat will score at least $3$ on $C_n$ if $n>3$. The remaining case $C_3$ with a tail can be computationally evaluated (cat moves on $C_3$ non-adjacent to the tail, then after the next cut can move to the center of a $P_3$, scoring $2$ more) to have score at least 3, thus the cat will score $n+1$.
    
    In the more general case, the herder has made at least $n-1$ cuts to remove all safe zones, so that there are no cycles containing the center vertex, it is the cat's turn and the cat is not in the center vertex. Further, the cat was in a safe zone, and thus is on a path of at least $3$ (from the cycle the cat was previously on) vertices. If the cat is not on the center of the path or the path has more than $3$ vertices, then they can score at least $2$ more by moving to the center-most available vertex, totalling $n+1$ cuts. If the cat is in the center of three vertices remaining, then the cat has a spoke and an outer-cycle edge adjacent to them. If all other spokes have been cut, that means that $n-2$ spokes in total have been cut, and we are in the previous case. Thus there is a $P_3$ formed by the spokes that is reachable to the cat, scoring $2$ more by the cat moving to the center. Overall, the cat can score $n+1$ again.
        
    Collectively, this provides a cat strategy that scores at least $n+1$ for a wheel on $n$ vertices. Combined with the herder strategy of herding in at most $n+1$ cuts, this shows that $\cat(W_n)=n+1$.
\end{proof}
We now look at the asymptotics of Cat Herding on planar graphs. We next show that planar graphs $G$ with $n$ vertices and maximum degree $\Delta$ have $\cat(G)$ asymptotically growing with $O(\sqrt{\Delta n})$, and this is asymptotically tight. We refer to \cite{DIKS1993258} for the main idea for these arguments, based on edge separators.

An \emph{edge separator} of a graph $G$ is a partition of $V(G)$ into $A,B$, such that $|A|\leq \frac{2}{3}n,|B|\leq \frac{2}{3}n$. We say that $|[A,B]|$ is the \emph{size} of the edge separator, where $[A,B]=\{\{a,b\}\in E(G)|a\in A,b\in B\}$.

Theorem 2.2 of \cite{DIKS1993258} informs us that a planar graph $G$ with maximum degree $\Delta$ and $n$ vertices has an \emph{edge separator} of size $\leq 2\sqrt{2\Delta n}$.

\begin{theorem}\label{thm:planarUpper}
    If $G$ is a planar graph on $n$ vertices of maximum degree $\Delta$, then $$\cat(G)\leq 2\sqrt{2}(3 + \sqrt{6})\left(\sqrt{\Delta n}\right)\leq 15.42\sqrt{\Delta n}$$
\end{theorem}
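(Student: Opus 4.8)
The plan is to let the herder play a divide-and-conquer strategy built from the quoted edge-separator bound. Given the current planar graph $G$ on $n$ vertices with maximum degree $\Delta$, the herder fixes \emph{in advance} an edge separator $(A,B)$ with $|A|,|B|\le\tfrac23 n$ and $|[A,B]|\le 2\sqrt{2\Delta n}$, and then simply deletes every edge of $[A,B]$, one per turn in an arbitrary order, ignoring whatever the cat does in the meantime. After these at most $2\sqrt{2\Delta n}$ cuts the remaining graph is the disjoint union of $G[A]$ and $G[B]$, so wherever the cat now sits, its connected component $H$ is a subgraph of one of these two, hence has at most $\tfrac23 n$ vertices, is planar, and has maximum degree at most $\Delta$. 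That the cat was free to roam all of $G$ while the cut-set was being dismantled is irrelevant to an upper bound: only the state after the cut-set is gone matters, and from that point $H$ is a separate component, so by \cref{noPass} (equivalently \cref{subgraphMonotone}) the remaining length of the game is at most $\cat(H)$. The herder then recurses on $H$.

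This yields a recursion for $f(n)$, the worst value of $\cat(G)$ over planar $G$ on at most $n$ vertices with $\Delta(G)\le\Delta$:
\[
 f(n)\ \le\ 2\sqrt{2\Delta n}\ +\ f\bigl(\lfloor\tfrac23 n\rfloor\bigr)\qquad(n\ge 2),\qquad f(1)=0 .
\]
Since $\lfloor\tfrac23 n\rfloor<n$ for every $n\ge 1$, unrolling down to a single-vertex component produces a finite sum dominated by the geometric series
\[
 f(n)\ \le\ \sum_{k\ge 0} 2\sqrt{2\Delta\,(2/3)^{k} n}\ =\ 2\sqrt{2}\,\sqrt{\Delta n}\sum_{k\ge 0}\bigl(\sqrt{2/3}\bigr)^{k}\ =\ \frac{2\sqrt{2}}{1-\sqrt{2/3}}\,\sqrt{\Delta n}.
\]
Finally one checks the algebraic identity $\dfrac{1}{1-\sqrt{2/3}}=\dfrac{\sqrt{3}}{\sqrt{3}-\sqrt{2}}=\sqrt{3}\,(\sqrt{3}+\sqrt{2})=3+\sqrt{6}$, giving $\cat(G)\le 2\sqrt{2}\,(3+\sqrt{6})\sqrt{\Delta n}$, and $2\sqrt{2}\,(3+\sqrt{6})=15.41\ldots<15.42$ is just the numerical estimate.

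I expect no deep obstacle here; the work is in the bookkeeping. One must confirm the separator theorem may legitimately be re-applied to the cat's component $H$ — planarity, the vertex bound $\le\tfrac23 n$, and $\Delta(H)\le\Delta$ are all inherited — that the floors in ``$\le\tfrac23 n$ vertices'' only help (monotonicity of $f$), and that the unrolled expression really is a finite sum bounded by the infinite geometric one, so no convergence issue is being glossed over. The one point genuinely worth spelling out in the write-up is why the herder may commit to the separator edges before seeing the cat's moves: the cat cannot reconnect $A$ to $B$, the upper bound uses only the post-removal configuration, and that is exactly where \cref{noPass}/\cref{subgraphMonotone} enters. Everything else (choice of stopping point, tiny base cases) is routine.
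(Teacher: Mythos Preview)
Your proof is correct and follows essentially the same approach as the paper: both use the divide-and-conquer herder strategy built on the Diks et al.\ edge-separator bound, yielding the same recurrence. The only cosmetic difference is that the paper verifies the constant $2\sqrt{2}(3+\sqrt{6})$ by a one-step induction, whereas you unroll the recurrence into a geometric series and sum it to derive the constant.
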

\begin{proof}
    Let $G$ be a planar graph on $n$ vertices with maximum degree $\Delta$. We proceed by induction on $n$. As a base case, if $n=1$, then $\Delta=\cat(G)=0$, and we are done. Otherwise, we apply Theorem 2.2 of \cite{DIKS1993258} to get an edge separator $V(G)=A\cup B$ of size $\leq 2\sqrt{2\Delta n}$. Then the herder's strategy is to cut $[A,B]$, and then play on a subgraph induced by either $A$ or $B$, each of which contain at most $\frac{2n}{3}$ vertices. Without loss of generality, assume it is component $A$. The overall cost is then computable by the following.
    \begin{align*}
        \cat(G)
        &\leq |[A,B]|+\cat(A)\\
        &\leq 2\sqrt{2\Delta n}+2\sqrt{2} (3 + \sqrt{6})\left(\sqrt{\Delta \frac{2}{3}n}\right)\\
        &= 2\sqrt{2} (3 + \sqrt{6})\left(\sqrt{\Delta n}\right)
    \end{align*} 
    We thus obtain the desired bound.
\end{proof}

It is known that replacing each edge of $P_p\square P_p$ with $k$ copies of $P_3$ results in a graph with edge separator of size at least $\sqrt{\Delta n}/9=\sqrt{4k(p^2+2kp(p-1))}/9$; see \cite{DIKS1993258}. We show that this graph family has cat number asymptotically $\sqrt{\Delta n}$. We thus know exactly the asymptotic cat behaviour on planar graphs.

Let $p,k\in \mathbb{N}$, with $k\geq 1, p\geq 3$. Set $H=P_p\square P_p$. Replace every edge with $k$ edges between the same vertices. Subdivide every edge with one new vertex. Call the resulting graph $Gr(p,k)$.

It should be clear that $P_p\square P_p$ has $p^2$ vertices and $2p(p-1)$ edges. On following the construction of $Gr(p,k)$ it should be clear that each edge results in $k$ new vertices. Thus in total $|V(Gr(p,k))|=p^2+2kp(p-1)$. Also note that $\Delta(Gr(p,k))=4k$ when $p >2$.

\begin{lemma}
    If $k\geq 1,p\geq 3$, then $\cat(Gr(p,k))\geq kp$.
\end{lemma}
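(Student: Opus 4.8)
The plan is to have the cat play entirely on the $p^2$ \emph{grid vertices} of the copy of $P_p\square P_p$ inside $Gr(p,k)$, treating the $k$ subdivided parallel edges joining two adjacent grid vertices $u,v$ as a single \emph{bundle}. Call a bundle \emph{traversable} if at least one of its $k$ subdivision vertices still has both of its incident edges present; along such a subdivision vertex the cat can move between $u$ and $v$ in one turn. The enabling observation is a counting fact: each cut deletes one edge, which is incident to exactly one subdivision vertex, and a bundle becomes non-traversable only once all $k$ of its subdivision vertices have lost an incident edge; since each subdivision vertex lies in a unique bundle, after $m$ cuts at most $\lfloor m/k\rfloor$ bundles can be non-traversable. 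In particular $k-1$ cuts leave every bundle traversable, and $k$ cuts leave at most one bundle non-traversable.

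The core of the argument is the following claim, proved by induction on $q\ge 2$: \emph{if at some moment the cat occupies a grid vertex of an intact $q\times q$ sub-grid-bundle $B$ (that is, $q^2$ grid vertices arranged as a $q\times q$ grid, with all of their $2q(q-1)$ bundles fully intact), then the cat survives at least $kq$ further cuts.} For the base case $q=2$, the bundled $C_4$ on the four grid vertices of $B$ remains connected (all four grid vertices in one component) as long as at most one of its bundles is non-traversable, hence---by the counting fact---through at least the first $2k-1$ cuts; a short case check on the $2k$-th cut (the removed bundles leave at most a $P_3$ on the four grid vertices, unless they isolate a single grid vertex, which the cat has avoided) shows the cat still has another grid vertex to move to, so it survives $\ge 2k$ cuts.

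For the inductive step, $q\ge 3$: through the herder's first $k$ cuts at most one bundle of $B$ becomes non-traversable, and a $q\times q$ grid minus one edge is connected, so the cat can keep moving among the grid vertices of $B$ without ever being isolated. After these $k$ cuts, choose a $(q-1)\times(q-1)$ sub-grid-bundle $B'\subseteq B$ by deleting the row or column containing the at-most-one non-traversable bundle; every bundle of $B'$ is then intact. Since $B$ minus one edge is still connected, the cat can move onto a grid vertex of $B'$, and the induction hypothesis gives at least $k(q-1)$ further cuts of survival, for a total of $k+k(q-1)=kq$. Applying the claim with $q=p$ (so $q\ge 3\ge 2$), with the cat started on any grid vertex of $Gr(p,k)$ and noting that all of $Gr(p,k)$ is itself an intact $p\times p$ sub-grid-bundle, we obtain a cat strategy surviving $\ge kp$ cuts, whence $\cat(Gr(p,k))\ge kp$ by \cref{stratRules}.

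The step I expect to be the main obstacle is making the inductive step airtight: ruling out that the herder short-cuts the game by boxing in the cat's current grid vertex instead of shrinking $B$ (handled by observing that isolating a grid vertex requires all of its $\ge 2$ incident bundles to be non-traversable, i.e.\ at least $2k$ cuts, whereas the cat is always free to vacate a threatened vertex), and carefully verifying the reachability used to relocate the cat onto $B'$, which rests on the $2$-edge-connectivity of the $q\times q$ grid. The $q=2$ base case is elementary but its case analysis should be written out in full.
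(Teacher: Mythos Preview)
Your inductive scheme has a genuine gap in the step from $q$ to $q-1$. You assume the cat sits on a $q\times q$ sub-grid $B$ whose bundles are \emph{fully intact} (all $2k$ edges present in each bundle), let the herder make $k$ cuts, and then claim you can pass to a $(q-1)\times(q-1)$ sub-grid $B'$ that is again fully intact. But the counting fact only bounds the number of \emph{non-traversable} bundles, not the number of \emph{damaged} ones: the herder may spread the $k$ cuts across $k$ distinct bundles, leaving all of them traversable yet none of them fully intact. Concretely, with $p=q=3$ and $k=2$, let the herder cut one edge from each of the two middle-row bundles $\{(2,1),(2,2)\}$ and $\{(2,2),(2,3)\}$; every one of the four $2\times 2$ corners contains one of these damaged bundles, so no fully intact $2\times 2$ sub-grid exists. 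Your inductive hypothesis therefore fails to propagate. Weakening ``intact'' to ``all bundles traversable'' does not rescue the argument either, because then the premise of the counting fact (that each bundle needs $k$ \emph{fresh} cuts to die) no longer holds at the start of the next phase: a bundle already carrying $k-1$ cuts can be killed by a single further cut, so $k$ new cuts could destroy up to $k$ bundles of $B'$.

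The paper avoids this by abandoning sub-grid induction altogether. It works globally with rows and columns: disconnecting any single row (or column) costs at least $k$ cuts, rows are pairwise edge-disjoint (and disjoint from columns), so with $\le kp-1$ total cuts at least one full row and at least one full column remain connected. The cat's invariant is to occupy the intersection of such a row/column pair; after one more cut at most one of its current row or column is broken, and the surviving one provides a path to the new good intersection. This sidesteps the damaged-but-traversable problem entirely because the accounting is over total cuts and whole rows/columns, not over incremental cuts within a shrinking window. If you want to keep an inductive flavour, you would need to phrase the hypothesis in terms of total cuts from the start of the game (so the global $\lfloor m/k\rfloor$ bound applies), not in terms of cuts made since entering the current sub-grid.
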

\begin{proof}
    The cat's strategy is to start anywhere on a vertex $(u,v)$ from $P_p\square P_p$. For all $u,v$, let $H(u,v)$ be the graph induced by all vertices with $u$ as first coordinate or $v$ as second from $P_p\square P_p$, as well as all vertices created by subdividing edges in $H(u,v)$, so that $H(u,v)$ has $k$ edge-disjoint paths between any pair of vertices from $P_p\square P_p$ (at least initially). We argue that after the first $kp-1$ cuts, some vertex $(x,y)$ exists such that: 1) all $v\in V(H(x,y))$ from $P_p
    \square P_p$ are connected, and 2) there exists an $(x,y),(u,v)$ path. We will call such $H(u,v)$ \emph{intact}.

    Note that for the first $kp-1$ cuts (when the cat needs a response), at least one row and at least one column must both still have paths between all vertices (as it takes $k$ cuts to fully disconnect any given $P_p$, and removing all $p$ in either rows/columns must take at least $kp$ cuts). Thus an intact $H(u,v)$ exists. The cat's strategy is to move to $(u,v)$. Note that since $H$ initially contained a full vertical/horizontal set of $p$ grid vertices, with only one cut, some row or column is still fully connected. This must meet the fully intact row/column pair that must exist, so the cat can move there.

    The sole exceptional case is when all fully intact rows/columns are the same as the one the cat is currently in. In this case, there are $(p-1)$ other rows/columns that have been fully cut, thus costing at least $2k(p-1)=kp+k(p-2)$ cuts. Since we restrict $p\geq 3$, this takes strictly more cuts to reach than $kp$.
\end{proof}
\begin{theorem}\label{thm:planarLower}
    For $p \geq 3$, the family $Gr(p,k)$ have $\Delta,n$ arbitrarily large, and $\cat(Gr(p,k))\geq \frac{\sqrt{3}}{6}\sqrt{\Delta n}>0.288\sqrt{\Delta n}$.
\end{theorem}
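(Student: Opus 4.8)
The plan is to combine the preceding lemma $\cat(Gr(p,k)) \geq kp$ with the explicit vertex count and maximum degree of $Gr(p,k)$, reducing the theorem to an elementary inequality in $p$ and $k$.

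First I would record the two facts supplied by the construction: $n := |V(Gr(p,k))| = p^2 + 2kp(p-1)$ and, since $p \geq 3 > 2$, $\Delta := \Delta(Gr(p,k)) = 4k$. Both of these tend to infinity, for instance along $k \to \infty$ with $p$ fixed (using $p - 1 \geq 2 > 0$), so the ``$\Delta, n$ arbitrarily large'' assertion is immediate and needs no further comment.

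Next, by the lemma it suffices to prove $kp \geq \frac{\sqrt{3}}{6}\sqrt{\Delta n}$. Since both sides are nonnegative I would square, so that the goal becomes $k^2 p^2 \geq \tfrac{1}{12}\Delta n = \tfrac{1}{12}\cdot 4k\bigl(p^2 + 2kp(p-1)\bigr) = \tfrac{k}{3}\bigl(p^2 + 2kp(p-1)\bigr)$. Multiplying through by $3/k$ and expanding $2kp(p-1) = 2kp^2 - 2kp$, this is equivalent to $3kp^2 \geq p^2 + 2kp^2 - 2kp$, i.e.\ to $kp^2 + 2kp \geq p^2$, i.e.\ to $k(p+2) \geq p$. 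As $k \geq 1$ we get $k(p+2) = kp + 2k \geq p + 2 > p$, which closes the argument; I would also note in passing that $\frac{\sqrt{3}}{6} = 0.2886\ldots > 0.288$, which yields the second, numerical form of the bound stated in the theorem.

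There is essentially no hard step here: the substantive work is the lower-bound lemma $\cat(Gr(p,k)) \geq kp$, whose intact-row/column argument does the real lifting. The only point requiring care is the bookkeeping of constants — verifying that $(\sqrt 3/6)^2 = 1/12$ and that the factor $4k$ coming from $\Delta$ cancels cleanly against the separator-motivated normalization, so that the inequality genuinely collapses to $k(p+2)\ge p$, which is manifestly true for every $k \geq 1$ and every $p \geq 1$ (in particular for $p \geq 3$).
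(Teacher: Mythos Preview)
Your proposal is correct and follows essentially the same approach as the paper: invoke the lemma $\cat(Gr(p,k))\ge kp$, then square and verify the elementary inequality $k^2p^2 \ge \tfrac{1}{12}\Delta n$ using $n=p^2+2kp(p-1)$ and $\Delta=4k$. Your algebraic route (reducing to $k(p+2)\ge p$) differs in bookkeeping from the paper's (which factors out $k^2p^2$ and bounds $\tfrac{1}{3}(\tfrac{1}{k}+2-\tfrac{2}{p})\le 1$), but the two are equivalent rearrangements of the same inequality.
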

\begin{proof}
    All we need to do is show that $kp\geq \frac{\sqrt{3}}{6}\sqrt{\Delta n}$. Noting everything is positive, it will be simpler to show that $k^2p^2\geq \frac{1}{12}\Delta n$.
    \begin{align*}
        \frac{1}{12}\Delta n
        &=\frac{1}{12}4k(p^2+2kp(p-1))\\
        &=k^2p^2\left(\frac{1}{3}\left(\frac{1}{k}+2-\frac{2}{p}\right)\right)\\
        &\leq k^2p^2\left(\frac{1}{3}\left(\frac{1}{k}+2\right)\right)\\
        &\leq k^2p^2\left(\frac{1}{3}\left(3\right)\right)\\
        &=k^2p^2
    \end{align*}
    We again have obtained the desired bound.
\end{proof}
\begin{corollary}
    Planar graphs have $\cat(G)=O(\sqrt{\Delta n})$, and this is tight.
\end{corollary}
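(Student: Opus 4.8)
The statement is a packaging of the two preceding theorems, so the plan is simply to assemble them and to check the one point that has not yet been made explicit. For the upper bound, I would invoke Theorem~\ref{thm:planarUpper} directly: every planar $G$ on $n$ vertices with maximum degree $\Delta$ satisfies $\cat(G)\leq 2\sqrt{2}(3+\sqrt{6})\sqrt{\Delta n}\leq 15.42\sqrt{\Delta n}$, which is by definition $O(\sqrt{\Delta n})$ (the constant is absolute, independent of the graph). That disposes of the ``$\cat(G)=O(\sqrt{\Delta n})$'' half immediately.

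For tightness, the plan is to exhibit a single family witnessing that the exponent $1/2$ on $\Delta n$ cannot be lowered, and the natural candidate is exactly the family $Gr(p,k)$ from Theorem~\ref{thm:planarLower}. First I would note that $Gr(p,k)$ is planar: $P_p\square P_p$ is planar, replacing each edge by $k$ parallel copies keeps it planar (draw the copies as $k$ nearly-parallel arcs between the same two vertices, with no other vertex in the thin region they bound), and subdividing every edge with one new vertex preserves planarity and in fact returns us to a simple graph. Then Theorem~\ref{thm:planarLower} gives $\cat(Gr(p,k))\geq \tfrac{\sqrt{3}}{6}\sqrt{\Delta n}>0.288\sqrt{\Delta n}$, with $\Delta=4k$ and $n=p^2+2kp(p-1)$ both growing without bound as $p,k\to\infty$. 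Hence there is a constant $c>0$ and an infinite family of planar graphs with $\cat(G)\geq c\sqrt{\Delta n}$.

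Combining the two: if the $O(\sqrt{\Delta n})$ bound could be improved to $o(\sqrt{\Delta n})$, it would in particular force $\cat(Gr(p,k))=o(\sqrt{\Delta(Gr(p,k))\,n(Gr(p,k))})$ along the family, contradicting the lower bound of Theorem~\ref{thm:planarLower} for $p,k$ large. Therefore $\cat(G)=\Theta(\sqrt{\Delta n})$ over planar graphs, i.e.\ the $O(\sqrt{\Delta n})$ bound is tight, as claimed.

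\textbf{Main obstacle.} There is essentially no mathematical difficulty here beyond what is already proved; the only thing requiring a word of care is articulating what ``tight'' means (matching upper and lower bounds up to a multiplicative constant, with the relevant parameters unbounded) and confirming the planarity of $Gr(p,k)$, which the earlier text uses implicitly but states only in passing.
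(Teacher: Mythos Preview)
Your proposal is correct and follows the same approach as the paper: the paper's proof is a one-line appeal to \Cref{thm:planarUpper} and \Cref{thm:planarLower}, and you simply unpack that appeal with the explicit constants and a remark on the planarity of $Gr(p,k)$. The added planarity justification is a reasonable clarification but not a different argument.
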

\begin{proof}
    The proof is immediate from the previous bounds in \Cref{thm:planarUpper} and \Cref{thm:planarLower}.
\end{proof}

\newcommand{\cutwidth}[0]{\mathrm{cutwidth}}
The notion of edge separators leads to another natural bound in terms of cut-width. We recall the definition of cut-width. Let $G$ be a graph and $v_1,\dots, v_n$ be a indexing (ordering) $I$ of the vertices of $G$. For this indexing $I$, set $wd(G,I)=\max_i|[\{v_1,\dots, v_i\},\{v_{i+1},\dots, v_n\}]|$, the number of edges required to separate the first $i$ vertices from the rest. Amongst all indexings $I$, we have $\cutwidth(G)=\min_Iwd(G,I)$. We can now state our theorem.
\begin{theorem}
    If $G$ is a graph with $n$ vertices, then $\cat(G)\leq \cutwidth(G)\lceil\log_2n\rceil$.
\end{theorem}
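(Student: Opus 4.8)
The plan is to produce a herder strategy that always isolates the cat using at most $\cutwidth(G)\lceil\log_2 n\rceil$ cuts; the theorem then follows from the herder half of \cref{stratRules}. Write $w=\cutwidth(G)$ and fix once and for all an indexing $I=(v_1,\dots,v_n)$ of $V(G)$ with $wd(G,I)=w$, so that for every index $i$ the cut $[\{v_1,\dots,v_i\},\{v_{i+1},\dots,v_n\}]$ has at most $w$ edges. The idea is to run the same binary search along this indexing that the herder uses on $P_n$, with the single difference that carving out one ``middle cut'' now takes up to $w$ turns instead of one.

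Concretely, I would have the herder maintain the invariant that at the start of each \emph{phase} the cat's connected component in the current (partially cut) graph is contained in a contiguous block $B=\{v_a,\dots,v_b\}$ of the indexing; initially $B=V(G)$, and the invariant holds trivially. In a phase starting from a block of size $m=b-a+1\geq 2$, set $i=a+\lceil m/2\rceil-1$, which splits $B$ into $L=\{v_a,\dots,v_i\}$ (of size $\lceil m/2\rceil$) and $R=\{v_{i+1},\dots,v_b\}$ (of size $\lfloor m/2\rfloor$); the herder then spends consecutive turns deleting, one at a time, every edge of the \emph{current} graph that joins $L$ to $R$. Such edges form a subset of $[\{v_1,\dots,v_i\},\{v_{i+1},\dots,v_n\}]$, so there are at most $w$ of them, and the phase costs at most $w$ cuts (and $0$ if there are none; if the cat happens to get isolated partway through, the game is already over). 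When the phase ends, the herder lets the new block $B'$ be whichever of $L,R$ contains the cat.

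The step I expect to be the crux is verifying that this invariant really does survive a phase, even though the cat keeps moving throughout it. Here monotonicity of edge deletion is what makes it work: since the herder only ever deletes edges, the cat's component can only shrink, so it stays inside $B$ for the whole phase; and once every current $L$--$R$ edge has been removed, the cat's component, being connected and contained in $B=L\cup R$, must lie entirely within $L$ or entirely within $R$. Hence $B'$ is well defined, $|B'|\leq\lceil m/2\rceil$, and the invariant is restored. Note that we never need to separately argue that a sub-block inherits a small-width indexing; all that is ever used is that each middle cut the herder slices has at most $w$ edges, which is immediate from the fixed indexing $I$.

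It remains to count. Each phase costs at most $w$ cuts and shrinks the block size from $m$ to at most $\lceil m/2\rceil$, and iterating the map $m\mapsto\lceil m/2\rceil$ from $n$ reaches $1$ after exactly $\lceil\log_2 n\rceil$ steps (the standard fact that repeated halving-and-rounding-up from $n$ takes $\lceil\log_2 n\rceil$ steps); once $|B|=1$ the cat stands alone and is captured. Therefore the herder finishes within $w\lceil\log_2 n\rceil$ cuts, and \cref{stratRules} gives $\cat(G)\leq\cutwidth(G)\lceil\log_2 n\rceil$, as claimed.
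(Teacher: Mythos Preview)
Your proof is correct and follows essentially the same approach as the paper: fix a minimum-width linear layout and have the herder simulate the binary-search path strategy, with each simulated edge cut replaced by cutting all (at most $w$) crossing edges at the chosen index. Your write-up is in fact slightly more careful than the paper's, making explicit the invariant that the cat's component stays inside the current block and why it persists through a phase, whereas the paper simply remarks that the path-herder only needs to know which side the cat is on.
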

\begin{proof}
    Let $v_1,\dots,v_n$ be an indexing that minimizes width, and thus all cuts separating $v_1,\dots,v_i$ from $v_{i+1},\dots, v_n$ are at most $\cutwidth(G)$. The herder's strategy is to play as if on a path $v_1,\dots, v_n$, and when they would delete edge $v_iv_{i+1}$, they instead cut all edges from vertices in $\{v_1,\dots, v_i\}$ to vertices from $\{v_{i+1},\dots,v_n\}$. Note that this herder strategy on paths does not ask the cat's specific location, simply which component it is in. It is thus not relevant if the cat is able to end their turn in the same spot (in the corresponding path game). Since each `path' cut costs at most $\cutwidth(G)$ real cuts, and at most $\lceil\log_2n\rceil$ `path' cuts must be made, we obtain the bound.
\end{proof}

We next pursue the cat numbers of complete graphs as the main result, but the argument will require its own section.
\section{Complete Graphs}\label{sec:Complete}
In paths, the herder essentially uses binary search to isolate the cat, cutting the graph in half each time. We aim to argue an analogous herder strategy is optimal for complete graphs. Namely, we show that an optimal strategy on complete graphs for the herder is to cut $K_n$ into $K_{\lceil n/2\rceil}$ and $K_{\lfloor n/2\rfloor}$, and repeat until a base case of $K_2=P_2$ or $K_3=C_3$ is reached. We will discover that there are some cases where the binary search strategy is not the unique optimal herder strategy, although it is certainly one of the simplest to describe, and no strategy can surpass it in performance.

The overall objective is to show the following results, spread across \Cref{sec:Complete}.
\begin{restatable}[]{theorem}{comCatThm}
\label{thm:mainComplete}
    If $c_2=1$, $c_3=2$, and $c_n=\lfloor \frac{n}{2}\rfloor\lceil \frac{n}{2}\rceil+c_{\lceil\frac{n}{2}\rceil}$ for $n> 3$, then for $n\geq 2$, $\cat(K_n)=c_n$, $\frac{n^2}{3}-1\leq \cat(K_n)\leq \frac{n^2+n}{3}$, and 
    \[\cat(K_n)=\frac{n^2-n}{2}-\lfloor \log_2(n-1)\rfloor-\sum_{j=2}^{\lfloor \frac{n-2}{2}\rfloor}j\left(\left\lfloor 1+\log_2\frac{n-1}{2j+1}\right\rfloor\right).\]
\end{restatable}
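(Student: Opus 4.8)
This theorem bundles three claims, and I would establish them in order: first $\cat(K_n)=c_n$ (the recurrence), then the quadratic bounds, then the closed form. The first is the substance; the other two are bookkeeping on top of the recurrence.

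\textbf{The recurrence.} The values $\cat(K_2)=\cat(P_2)=1$ and $\cat(K_3)=\cat(C_3)=2$ are already known, so fix $n>3$ and argue the two inequalities. For $\cat(K_n)\le c_n$, take a balanced vertex partition $V(K_n)=A\sqcup B$ with $|A|=\lceil n/2\rceil$; the herder deletes, one per turn, the $\lfloor n/2\rfloor\lceil n/2\rceil$ edges of $[A,B]$. The cat's intervening moves are harmless — while the cat sits in $A$ (or in $B$) it still sees an uncut clique on $A$ (resp.\ $B$), so it is not isolated early — and once $[A,B]$ is gone the cat's component is $K_{\lceil n/2\rceil}$ or $K_{\lfloor n/2\rfloor}$, whence \Cref{subgraphMonotone} and induction finish the game in at most $\lfloor n/2\rfloor\lceil n/2\rceil+\cat(K_{\lceil n/2\rceil})=c_n$ cuts. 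For $\cat(K_n)\ge c_n$ I would prove, by strong induction on $m$, the stronger statement: \emph{if the cat stands on a vertex $v$ lying in a $K_m$-subgraph of any graph $G$, then $\cat(G,v)\ge c_m$.} By \Cref{subgraphMonotone} it suffices to take $G=K_m$, and the plan is a two-phase cat strategy — phase one forces at least $\lfloor m/2\rfloor\lceil m/2\rceil$ cuts while maintaining an invariant guaranteeing that, at the phase's end, some uncut $K_{\lceil m/2\rceil}$ is both reachable and occupiable by the cat; phase two then calls the induction hypothesis for $c_{\lceil m/2\rceil}$ further cuts.

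\textbf{The main obstacle} is designing the phase-one invariant and the cat's move rule. A crude edge count does not suffice: for $m\ge 5$, after only $\lfloor m/2\rfloor\lceil m/2\rceil-1$ cuts the surviving graph need not contain any $K_{\lceil m/2\rceil}$, so the cat cannot wait passively and grab a clique at the end — it must steer. I would look for a potential function $\mu$ on (graph, cat-position) pairs with $\mu(K_m,v)=c_m$, $\mu\ge 0$, $\mu=0$ exactly when the cat is captured, and such that against any single herder cut the cat has a reply with $\mu$ dropping by at most $1$; a witnessing $\cat(G,v)\ge\mu(G,v)$ then follows by induction on the game length. The recursive form of $c_m$ suggests $\mu$ should reward a nested family of near-cliques around the cat and credit vertices the cat has recently vacated but which are still densely connected, so that the herder cannot corner the cat without systematically damaging many would-be hideouts. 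Pinning down such a $\mu$ exactly — and checking the parity split and the small cases ($m=4$, the transition to the $K_2,K_3$ bases) match the count $\lfloor m/2\rfloor\lceil m/2\rceil$ on the nose — is where the work lies.

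\textbf{The quadratic bounds and the closed form.} Once $\cat(K_n)=c_n$ is in hand, both inequalities $\tfrac{n^2}{3}-1\le c_n\le\tfrac{n^2+n}{3}$ follow by a routine induction from the recurrence, splitting on parity: $c_{2t}=t^2+c_t$ and $c_{2t+1}=t(t+1)+c_{t+1}$ (both valid for $n>3$), after which the inductive hypotheses substitute directly, with $n=2,3$ as base cases. For the closed form I would not derive it from scratch but verify that the right-hand expression $f(n)$ satisfies the defining recurrence and base values, hence equals $c_n$ by uniqueness. Writing $R(n):=\binom n2-c_n$ and using $\binom n2=\binom{\lceil n/2\rceil}{2}+\binom{\lfloor n/2\rfloor}{2}+\lfloor n/2\rfloor\lceil n/2\rceil$ together with the identity $\lceil n/2\rceil-1=\lfloor (n-1)/2\rfloor$, the required identity $f(n)-f(\lceil n/2\rceil)=\lfloor n/2\rfloor\lceil n/2\rceil$ reduces to showing that $\lfloor\log_2(n-1)\rfloor-\lfloor\log_2(\lceil n/2\rceil-1)\rfloor$ together with the telescoped difference of the two $j$-sums equals $\binom{\lfloor n/2\rfloor}{2}$; one checks this by comparing the sums term by term, the one delicate point being that the base value $R(3)=1$ is not $\binom 12$, which is exactly what forces the stand-alone $\lfloor\log_2(n-1)\rfloor$ term rather than a naive $j=1$ contribution.
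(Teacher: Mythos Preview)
Your proposal has a genuine gap in the heart of the argument: the lower bound $\cat(K_n)\ge c_n$. You correctly identify the upper bound via the balanced-bisection herder, and you correctly diagnose why a naive count fails (after $\lfloor m/2\rfloor\lceil m/2\rceil-1$ cuts no $K_{\lceil m/2\rceil}$ need survive). But your remedy is to \emph{hope} for a potential function $\mu$ with the listed properties, and you explicitly concede that ``pinning down such a $\mu$ exactly\dots is where the work lies.'' That is not a proof; it is a wish. No candidate $\mu$ is given, no invariant is stated, and there is no argument that the cat can always reply to a cut with $\mu$ dropping by at most one. Since the entire theorem rests on this inequality, the proposal as written does not establish the result.

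The paper does not use a potential function at all. Instead it exploits the minimax structure (\Cref{lrScore}): it writes down an explicit herder strategy $H$ (balanced bisection) and an explicit, rather intricate cat strategy $C$ (Algorithms~\ref{alg:nSmall}--\ref{alg:general}), shows $C$ is optimal against $H$ (\Cref{KnHCisC}), and then shows $H$ is optimal against $C$. The latter is the bulk of the work and proceeds by analysing any rival herder strategy $H'$ through the first moment the graph ceases to be $2$-edge-connected; the sizes of the maximal $2$-edge-connected components form an integer partition $\lambda$ of $n$, and \Cref{Knl2,KnkBiglk11,KnkBiglk1Bigl2Islk,KnkBiglk1Bigl21,KnkBiglk1Bigl2Mid} exhaust the possible shapes of $\lambda$ (with a computer check of several hundred small partitions in \Cref{KnSmallCases}). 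The delicate special cases $\lambda=(3\cdot 2^a,1)$ and $\lambda=(3\cdot 2^a,3\cdot 2^a,1)$, where an alternative herder matches bisection exactly, are what force the baroque branches in the cat's Algorithm~\ref{alg:general}. If a clean potential $\mu$ exists it would be a substantial simplification, but nothing in your proposal indicates what it should be.

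Your treatment of the quadratic bounds is fine and matches the paper. For the closed form the paper takes the telescoping route $c_n=c_2+\sum_{i=2}^{n-1}\Delta_i$ after deriving a closed form for $\Delta_i$ (\Cref{lem:firstDiffClosed}), rather than verifying the recurrence directly as you propose; either approach works, though the telescoping one explains where the formula comes from.
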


This provides a concise recurrence representing binary search, asymptotic bounds on the $\cat$ number, and (surprisingly) provides an exact closed formula for $\cat(K_n)$.
\subsection{General Approach}
We first observe that Cat Herding is an instance of a minimax game.

\begin{definition}
    A \emph{minimax game} is a finite collection of game states, $S\times \{L,R\}$, where $S$ is the current state, and $L,R$ denotes which player has the next move. In addition, each state $s\in S$ can be represented as $\{L|R\}$, where $L,R$ are the collections of legal moves for each player in that state. Some game states $s\in S$ are considered terminal, in which case there is an associated score from a totally ordered set of scores (we will use $\mathbb{N}$ for this context). One player's objective is to maximize score, while the other's is to minimize it.
\end{definition}

We note that the game of Cat Herding is a minimax game where $S=\{G\}\cup \bigcup_{H\subseteq G}(H\times V(G))$, for subgraphs $H$ of $G$ obtained by edge deletion, and allowing position $G$ without a cat token position as the initial setup.

Note that by maintaining a count of the number of edges the herder has deleted, the game of Cat Herding satisfies the conditions of a minimax game, with a game tree of height at most $|E(G)|$.

\begin{definition}
    A \textbf{sub-position} of a minimax game is any minimax game that is reachable from the starting position. Let $\mathcal{S}$ be the set of all sub-positions. 
    An \textbf{$L$-strategy} for a minimax game is a function from $\ell:\mathcal{S}\rightarrow \mathcal{S}$ such that if $\ell(\{L|R\})=s$, then $s\in L$, so that $\ell$ maps positions to legal $L$ moves. The \textbf{$R$-strategies} are defined likewise.
\end{definition}

We view $L$-,$R$-strategies essentially as rules that decide what move to make from a given position.

Suppose $G$ is a minimax game. The left player ($L$) tries to maximize the score and the right player ($R$) tries to minimize it. We will use $L,R$ to denote strategies for left/right, respectively, functions from non-terminal states to child states (moves) describing what $L,R$ will do in each game state. Denote $s(G,L,R)$ to be the score obtained when $L$ plays $R$ at game $G$ according to their strategies. When we omit $L,R$ from $s(G,L,R)$, we let the omitted strategy be any optimal strategy for that player. An $L$-strategy is considered optimal when for all $R$, $s(G,L,R)\leq s(G,L)$, and an $R$-strategy is considered optimal when for all $L$, $s(G,R)\leq s(G,L,R)$. Because minimax games have finitely many states, from any position $s\in S\times \{L,R\}$, one can take the minimum or maximum (respectively, for $L,R$) of $s$'s direct children sub-positions' values in order to compute the value of $s$. Doing so recursively assigns all states a value.  Optimal strategies must exist then, as from any state, there must be a move preserving score.

\par Note then that $s(G)=s(G,L)=s(G,R)$ means that $L$ scores as well as any optimal left-strategy, and similarly for $R$, meaning $L,R$ are optimal as well.
\begin{lemma}\label{lrScore}
    If $G$ is a scoring game, $L,R$ are strategies for the $2$ players such that $L$ is optimal against $R$, and $R$ is optimal against $L$, then both $L,R$ are optimal against any strategy.
\end{lemma}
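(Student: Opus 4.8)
The plan is to leverage two facts already in place: (i) the game value $v=s(G)$ exists, and globally optimal strategies $L^{*},R^{*}$ exist (established in the preceding paragraph by backward induction on the finite game tree), where ``globally optimal'' means $L^{*}$ guarantees at least $v$ against every opponent and $R^{*}$ guarantees at most $v$ against every opponent; and (ii) the hypothesis ``$L$ is optimal against $R$'' means $L$ is a best response to $R$, i.e. $s(G,L,R)\ge s(G,L',R)$ for every $L$-strategy $L'$, and symmetrically ``$R$ is optimal against $L$'' means $s(G,L,R)\le s(G,L,R')$ for every $R$-strategy $R'$. The argument is then the standard observation that in a minimax game every equilibrium pair realizes the value.

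First I would pin down $s(G,L,R)=v$ by a two-sided squeeze. Specializing the best-response inequality for $L$ to $L'=L^{*}$ gives $s(G,L,R)\ge s(G,L^{*},R)\ge v$, the last step because $L^{*}$ is globally optimal. Specializing the best-response inequality for $R$ to $R'=R^{*}$ gives $s(G,L,R)\le s(G,L,R^{*})\le v$, the last step because $R^{*}$ is globally optimal. Hence $s(G,L,R)=v$.

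Next I would upgrade this equality to global optimality of each strategy, which is what the lemma asserts. For an arbitrary $R$-strategy $R'$, the best-response property of $R$ against $L$ gives $s(G,L,R')\ge s(G,L,R)=v$, so $L$ guarantees at least the value against every opponent and is therefore optimal. Symmetrically, for an arbitrary $L$-strategy $L'$, the best-response property of $L$ against $R$ gives $s(G,L',R)\le s(G,L,R)=v$, so $R$ guarantees at most the value against every opponent and is therefore optimal.

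There is no real combinatorial obstacle here; the only thing to be careful about is keeping the two inequality directions straight (maximizer versus minimizer) and being explicit that the proof consumes, rather than re-proves, the existence of the globally optimal strategies $L^{*},R^{*}$ and the value $v$. I would also remark that the result is exactly what licenses the later strategy-stealing style arguments for $K_n$: once a candidate Cat strategy and a candidate Herder strategy are each shown to be non-improvable against the other, both are automatically optimal against all play, so it suffices to analyze the single game they produce together.
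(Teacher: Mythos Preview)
Your proof is correct and follows essentially the same saddle-point squeeze as the paper: both use that a best response to $R$ does at least as well as a globally optimal $L^{*}$, and symmetrically, to pin $s(G,L,R)$ at the value $v=s(G)$, then read off optimality. The only cosmetic difference is that the paper packages your $L^{*},R^{*}$ into the omitted-argument notation $s(G,R),s(G,L)$ and compresses your final ``upgrade'' step into the sentence preceding the lemma, whereas you spell out explicitly that the best-response hypothesis on $R$ (resp.\ $L$) is what forces $s(G,L,R')\ge v$ (resp.\ $s(G,L',R)\le v$) for arbitrary opponents.
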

\begin{proof}
    Note this is well-defined as all strategies will be deterministic. Since $L$ is optimal against $R$, we find that $s(G,R)=s(G,L,R)$. Similarly, $R$ optimal against $L$ implies $s(G,L)=s(G,L,R)$. Since left is trying to maximize score and right minimize it, $s(G,L)\leq s(G)\leq s(G,R)$. We then find that

    \[s(G,L,R)=s(G,L)\leq s(G)\leq s(G,R)=s(G,L,R)\]

    If any inequality is strict, we reach a contradiction $s(G,L,R)<s(G,L,R)$, and thus $s(G,L)=s(G)=s(G,R)$, and both $L,R$ are optimal strategies.
\end{proof}

\subsection{Herder Strategy}
Let $C$ be any strategy that guarantees that when the binary search herder disconnects the graph, the cat is on the component with the most vertices. We will later specify a full strategy $C$ that satisfies this condition.

We now specify a herder strategy which will be optimal.
\begin{algorithm}
\caption{Optimal Herder Strategy $H$}\label{alg:herderStrat}
\begin{algorithmic}[1]
\Require $G=K_n$ for some $n\in \mathbb{N}$
\Ensure Output series of cuts that are an optimal sequence to isolate the cat.
\State If $G=K_2,K_3$, herder plays $G=P_2,C_3$ strategy.
\State Partition $V(K_n)$ into $V(K_n)=S\cup T$ where $S\cap T=\emptyset, ||S|-|T||\leq 1$.
\State Over the next $|S||T|$ turns, delete all edges from $S$ to $T$ (the edge set $[S,T]$).
\State Repeat on whichever component the cat is on.
\end{algorithmic}
\end{algorithm}

\begin{lemma}\label{KnHCisC}
    Any optimal strategy $C'$ against $H$ on $K_n$ may be replaced by $C$ without decreasing the score; that is, $\cat(K_n,H,C)=\cat(K_n,H)$.
\end{lemma}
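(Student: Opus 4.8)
The plan is to show that, against the fixed herder strategy $H$ of Algorithm~\ref{alg:herderStrat}, any optimal cat strategy $C'$ can be transformed, one ``phase'' at a time, into the strategy $C$ (the one that always stays in the largest surviving component when $H$ disconnects) without ever lowering the score. The key structural observation is that $H$ proceeds in \emph{phases}: in a phase it fixes a partition $V = S \cup T$ of the current clique $K_m$ with $\bigl||S|-|T|\bigr| \le 1$ and spends exactly $|S||T|$ consecutive cuts deleting the edge set $[S,T]$; only at the end of a phase is the graph disconnected, and then $H$ recurses on the component containing the cat. So the only choices the cat's strategy makes that matter to $H$ are (i) how it moves \emph{within} the current clique during a phase, which is irrelevant since $K_m$ stays connected on $V$ until the last cut of the phase and the cat can always reposition, and (ii) which side of the partition, $S$ or $T$, the cat ends up on when the phase closes.

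First I would set up induction on $n$ (equivalently, on the number of remaining phases). The base cases $K_2, K_3$ are immediate since the herder plays the known $P_2$, $C_3$ strategies and there is essentially no choice. For the inductive step, consider the first phase, with partition $V(K_n) = S \cup T$, $|S| = \lceil n/2 \rceil \ge |T| = \lfloor n/2 \rfloor$. I claim that ending the phase on the larger side $S$ is (weakly) better for the cat than ending on $T$: during the $|S||T|$ cuts of the phase, no matter how the cat moves, on the final cut of the phase the graph splits into $K_{|S|}$ and $K_{|T|}$, and by \Cref{subgraphMonotone} (monotonicity under subgraphs, since $K_{|T|}$ is a subgraph of $K_{|S|}$) we have $\cat(K_{|S|}) \ge \cat(K_{|T|})$. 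Hence replacing $C'$ by a strategy that behaves like $C'$ but, on the last move of the phase, steps into the $S$-side (always possible: $K_n$ restricted to $V$ is still connected right up to the last cut, so the cat can be on any vertex it likes, in particular one in $S$) yields a score of $|S||T| + \cat(K_{|S|}) \ge |S||T| + \cat(K_{|T|})$, i.e.\ at least the original score. After this swap, the residual game is exactly Cat Herding on $K_{|S|}$ against $H$ restricted to that clique, and by the induction hypothesis the cat's continuation there can be replaced by $C$ without loss; combined with the fact that $C$ by definition always takes the larger component, this exhibits $C$ as a strategy achieving $\cat(K_n, H, C') \le \cat(K_n, H, C)$, and since $C'$ was optimal we get equality, so $\cat(K_n, H, C) = \cat(K_n, H)$.

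The one point needing care — and the main obstacle — is the ``irrelevance of within-phase movement'' claim: I must argue that whatever vertex the cat occupies partway through a phase, it always has a legal non-trivial path to a vertex of its choosing (in particular into $S$) on the move that the phase-closing cut forces. This is where I would use that throughout a phase the induced subgraph on $V$ remains connected (only edges of $[S,T]$ are being removed, and $K_n[S]$, $K_n[T]$ are untouched and each has $\ge 1$ vertex, with at least one of them of size $\ge 2$ when $n \ge 3$), so non-trivial paths of the required type always exist — a move ``to the center of the component'' in the sense already used for paths and wheels. A secondary subtlety is making the induction bookkeeping match the recurrence $c_n = \lfloor n/2\rfloor\lceil n/2\rceil + c_{\lceil n/2\rceil}$: the inductive hypothesis should be stated as ``against $H$ on $K_m$, $C$ is optimal and achieves $c_m$'', so that the phase-cost $|S||T| = \lfloor n/2\rfloor\lceil n/2\rceil$ plus $c_{\lceil n/2\rceil}$ lines up on the nose. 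With those two pieces in place the transformation argument goes through cleanly.
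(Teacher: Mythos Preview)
Your proposal is correct and follows essentially the same approach as the paper: both exploit that $H$ proceeds in phases and only queries the cat's component at the moment of disconnection, then use induction together with monotonicity to conclude that landing in the larger half is best. The paper differs only cosmetically, first arguing that $\cat(K_m)$ is \emph{strictly} increasing so that any optimal $C'$ must already sit in the larger component (rather than modifying $C'$ phase by phase as you do). One small point to tighten: the comparison you actually need is $\cat(K_{|S|},H)\ge\cat(K_{|T|},H)$ against the fixed herder $H$, not $\cat(K_{|S|})\ge\cat(K_{|T|})$ from \Cref{subgraphMonotone}; your own last-paragraph induction hypothesis $\cat(K_m,H)=c_m$ supplies exactly this, since $(c_m)$ is visibly increasing from its recurrence.
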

\begin{proof}
    Let $C'$ be an optimal strategy in Cat Herding on $K_n$ against the herder strategy $H$. Consider games of $H$ vs $C$ and $H$ vs $C'$ played in parallel. Note that since the herder strategy takes no input from where the cat is excepting when the graph is cut in half and in $K_2,K_3$ endgames, we only need to consider when $C,C'$ differ in play on these turns. For $K_2,K_3$ endgames, we note that these are small cases, and one can easily check that $C$ performs optimally in these cases. Otherwise, consider the turns when a herder executing binary search fully disconnects the graph, say into $K_n,K_m$ with $n\geq m$.
    
    Note $n\geq m$ implies $\cat(K_n)\geq \cat(K_m)$ by Theorem~\ref{subgraphMonotone}. If $n>m$, then a game on $n$ may be played by voluntarily restricting cat play to $K_m$, and on being isolated in the subgraph, there must be one more edge to an exterior vertex (or else extra cuts have been made). Thus $\cat(K_n)>\cat(K_m)$, and $\cat$ on $K_n$ is a strictly increasing function. Thus when $C'$ causes the binary herder to disconnect the graph, the two components are equal in size, or $C'$ has the cat moving to the larger component. Similarly, $C$ by construction also satisfies this property. We do not care specifically which vertex is targeted in $K_n$ as the herder will only query component, not precise location, and so both cat strategies achieve the same score. Finally noting the score from induction in $K_n$ is the same in both $C,C'$, this completes the inductive argument.
\end{proof}
\subsection{Full Cat Strategy}

The cat's strategy pays attention to the structure of maximal $2$-edge-connected components in the first graph $G$ that is not $2$-edge-connected. Such graphs will be referred to as `critical graphs'.
\begin{definition}
    For any given $K_n$ and any herder strategy $H'$, we let $G$ be the first critical graph (not $2$-edge-connected) reached in play against $C$. We also let $(\lambda_i)_{i=1}^k$ denote the structure of $G$, where each $\lambda_i$ corresponds to a $2$-edge-connected component of $\lambda_i$ vertices, partitioning $V(G)$, so that $\sum_{i=1}^k\lambda_i=n$, and $\lambda_i\geq \lambda_j$ whenever $i\geq j$. This also leaves us a common convention of $|\lambda|=k$, and we will generally use $k$ for the size of $\lambda$.
\end{definition}
We consider the example of Cat Herding in $K_{39}$. During play on $K_{39}$, the resulting graph after each herder cut is result in $2$-edge-connected, until cutting some edge $e$ results in more than one $2$-edge-connected component. On edge $e$ being deleted, there are $2$-edge-connected components of size $12,1,5,13,8$, each connected by a bridge to the previous one, in that order. In this case, we would write $\lambda=(13,12,8,5,1)=(\lambda_5,\lambda_4,\lambda_3,\lambda_2,\lambda_1)$, denoting that these are the sizes of the $2$-edge-connected components, and we sort them by order for convenience. Also note here that $\lambda_5+\lambda_4+\lambda_3+\lambda_2+\lambda_1=13+12+8+5+1=39$, so that $\lambda$ may also be viewed as an integer partition of $39=|V(K_{39})|$. Note that as a consequence of the definition, the integer $2$ may never be part of a partition $\lambda$ that represents a critical graph, as $K_2$ is not $2$-edge-connected.

These $\lambda_i$ induce integer partitions (of size $k$) which are convenient to reason over. With our notation, we now specify the full cat strategy, using a vertex labelling to keep the strategy deterministic. We break the cat's strategy up into multiple sub-strategies in \Cref{alg:nearlyBin,alg:nSmall}, with the full strategy given in \Cref{alg:general}.

\begin{algorithm}
\caption{Cat Strategy on small graphs}\label{alg:nSmall}
\begin{algorithmic}[1]
\Require $n\leq 9$, and $\lambda_k=3$ or $\lambda=(6,1)$)
\Ensure Cat makes an optimal move assuming play originated on $K_n$.
\If{$\lambda=(3,3,1)$}
    \State The cat moves to the center-most component.\label{lin:catMoveCenter}
\ElsIf{$\lambda_k=3$}
    \State The cat moves to a vertex of a $K_3$ such that one of the other two vertices of this $K_3$ has a pendant edge. 
    \State On the cat's next turn, if the herder does not cut within the cat's $K_3$, then the cat moves within $K_3$ and voluntarily restricts play there. Otherwise, the cat has a path within the old $K_3$ to the pendant edge (since $k\geq 2$), providing access to the center of a $P_3$ which they move to, proceeding to play on a path.
\ElsIf{$\lambda=(6,1)$}
    \State The cat temporarily voluntarily restricts their play to $K_6$ until a new critical graph's $\lambda$ is reached, at which point they return to the start of the strategy.
\EndIf
\end{algorithmic}
\end{algorithm}
In \Cref{alg:nSmall}, line \ref{lin:catMoveCenter}, such a move is allowed since the cat selects maximum degree vertices on the previous turn, and if $K_1$ is in center component, it has $\deg(v)=2$, while some vertex of each of the $K_3$ has $\deg(v)=3$, which must have been where the cat was. Note also that this means that in some partitions, such as $(3,3,3,1)$ with the single vertex as a center node branching to each of the three triangles, the cat will play (locally) sub-optimally in order to guarantee this condition. However, this locally sub-optimal play remains globally optimal as the cat's strategy sacrifices less score than they gained to reach the substructure. These sorts of locally sub-optimal decisions likely occur elsewhere, but as we assume optimal play from both players, this does not affect our proof.

\begin{algorithm}
\caption{Cat's Strategy when herder behaves close to binary search}\label{alg:nearlyBin}
\begin{algorithmic}[1]
\Require $\lambda_k-\lambda_{k-1}\leq 1$, $\lambda_k> 1$
\Ensure Cat makes an optimal move assuming play originated on $K_n$.

\If{$\lambda_k=\lambda_{k-1}$}
            \If{any cuts are made within $C_{k-1},C_k$}
                \State The cat moves to the component $C_k-1$ or $C_k$ with fewer edge cuts and voluntarily restricts their play there henceforth.
            \Else
                \State The cat moves anywhere in component $C_k$. 
                \State After any next cut, at least one component of size $\lambda_k$ has no cuts and is reachable from the cat's current location, and the cat moves there, voluntarily restricting play to that complete graph thereafter. 
            \EndIf
        \Else 
            \If{any cuts are made within $C_{k-1}$}
                \State The cat voluntarily restricts their play to $C_k$, and moves anywhere.
            \Else
                \State The cat counts any current cuts in $C_{k}$ towards the `next' $\lambda_{k-1}$ cuts that are considered. 
                \If{any of the next $\lambda_{k-1}$ cuts are outside of $C_{k}$}
                    \State The cat voluntarily restricts their play to $C_{k}$ henceforth.
                \Else
                    \State For the next $\lambda_{k-1}-2$ cuts made, the cat moves anywhere in $C_{k}$. 
                    \State For the next $2$ cuts made, move anywhere in $C_{k-1}$. \label{lin:nextTwo}
                \EndIf
            \EndIf
        \EndIf
\end{algorithmic}
\end{algorithm}
In \Cref{alg:nearlyBin}, line \ref{lin:nextTwo}, we note that $K_{\lambda_k}$ is $\lambda_{k-1}$-edge-connected. Therefore this plan for the next two cuts is always possible, as all moves within $K_{\lambda_k}$ are before $\lambda_{k-1}$ cuts have been made in $K_{\lambda_k}$.

\begin{algorithm}
\caption{Optimal Cat Strategy $C$}\label{alg:general}
\begin{algorithmic}[1]
\Require $l:V(G)\rightarrow [1,\dots, n]$ be a bijection labelling every vertex with a number.
\Ensure The cat makes an optimal move assuming play originated on $K_n$.
\If{$G$ is $2$-edge connected}
    \State Move to the vertex $v$ with maximal $\deg(v)$ and lowest label $l(v)$.
\Else
    \State Let $C_i$ be all $k$ maximal $2$-edge-connected subgraphs. 
    \State Index the $C_i$ so that $|V(C_i)|=\lambda_i$ and $i\leq j$ implies $\lambda_i\leq \lambda_j$. 
    \State Set $\lambda=(\lambda_k,\dots, \lambda_1)$ as the integer partition of $n$ representing the critical graph $G$.
    \If{$n\leq 9$ and ($\lambda_k=3$ or $\lambda=(6,1)$)}
        \State The cat executes \Cref{alg:nSmall}.
    \EndIf
    \If{$\lambda_k=1$}
        \State The cat plays known optimal path strategy.
    \ElsIf{$\lambda=(3(2^m),3(2^m),1)$}
        \State The cat moves to the center-most component. After the next cut, the cat voluntarily restricts their play in $K_{3(2^m)}$ with an adjacent vertex, so that the partition is now $(3(2^m),1)$.\label{alg:weirdCase}
        \State The cat further places a temporary restriction to not enter the adjacent vertex.
        \State The cat removes the temporary restriction when the partition changes from $(3(2^m),1)$.
    \ElsIf{$\lambda_k-\lambda_{k-1}\leq 1$}
        \State The cat executes \Cref{alg:nearlyBin}
    \ElsIf{$\lambda=(3(2^m),1)$}
        \State The cat follows the strategy of line 21 but does not restrict themselves to the component, rather will continue play until the partition $\lambda$ changes again.
    \Else
        \State The cat moves to the vertex $v$ that is in a maximum $2$-edge-connected subgraph with maximal degree and lowest label $l(v)$ that is legal to move to. 
        \State The cat voluntarily restricts themselves to this component.
    \EndIf
\EndIf
\end{algorithmic}
\end{algorithm}

It should be clear that when playing against the binary search herder $H$ the cat will always move in a $2$-edge-connected component; for $n\leq 9$, one can check \cref{alg:nSmall}, and for larger $n$, a trace through \cref{alg:nearlyBin} gives the desired property. Thus this full strategy of \cref{alg:general} satisfies the key property for \cref{KnHCisC}.

Henceforth, we will aim to show that any herder strategy $H'$ playing against the provided cat strategy $C$ will perform no better than binary search herder strategy $H$. In order to do this, we seek a strong understanding of how the binary search strategy $H$ performs.
\subsection{Binary Search Herder Recurrence}
\begin{definition}
    Let $(c_n)$ be a sequence defined by the following recurrence: $c_2=1$, $c_3=2$, and $c_n=\lfloor \frac{n}{2}\rfloor\lceil \frac{n}{2}\rceil+c_{\lceil\frac{n}{2}\rceil}$ for $n> 3$.
\end{definition}

Note that the herder strategy of binary search is exactly expressed by this recurrence; it encapsulates deleting all edges between a vertex partition, and then repeating on the larger part (where the cat will end up). We will benefit by some study of the sequence $(c_n)$. We will make use of the first differences sequence, $\Delta_n=c_{n+1}-c_n$ for $n\geq 2$. 

\begin{lemma}\label{lem:firstDiffRecur}
    If $n\geq 2$, then the first differences $\Delta_n=c_{n+1}-c_n$ obey the following recurrence.
    \begin{align*}
        \Delta_n=\left\{\begin{array}{cc}
            \frac{n}{2}+\Delta_{\frac{n}{2}} & n>3,n\ even \\
            \frac{n+1}{2} & n>3,n\ odd\\
            1 & n=2\\
            3 & n=3
        \end{array}\right.
    \end{align*}
\end{lemma}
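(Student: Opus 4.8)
The plan is to verify the stated recurrence directly; no induction is needed, because the formula for $\Delta_n$ refers only to $\Delta_{n/2}$, and that very term materializes as soon as we expand the definition of $c_n$ at an even argument. The argument splits into the two explicit base values $n=2,3$ and a case analysis on the parity of $n$ for $n>3$, in each case substituting the defining recurrence for $c_n$ and $c_{n+1}$ and simplifying.

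First I would record the base cases. Straight from the definition, $\Delta_2 = c_3 - c_2 = 2 - 1 = 1$. Computing $c_4 = \lfloor 2\rfloor\lceil 2\rceil + c_2 = 4 + 1 = 5$ yields $\Delta_3 = c_4 - c_3 = 5 - 2 = 3$. These match the last two lines of the claimed recurrence.

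Next, suppose $n>3$ is even, say $n = 2m$ with $m \geq 2$. Since $2m>3$ and $2m+1>3$, both uses of the $c$-recurrence below are legal, and $\lfloor (2m+1)/2\rfloor = m$, $\lceil (2m+1)/2\rceil = m+1$. Hence $c_{2m} = m^2 + c_m$ and $c_{2m+1} = m(m+1) + c_{m+1}$, so
\[ \Delta_{2m} = c_{2m+1} - c_{2m} = m(m+1) + c_{m+1} - m^2 - c_m = m + (c_{m+1} - c_m) = \tfrac{n}{2} + \Delta_{n/2}, \]
and $\Delta_{n/2} = \Delta_m$ is well-defined because $m \geq 2$. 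Finally, suppose $n>3$ is odd, say $n = 2m+1$ with $m \geq 2$, so that $n+1 = 2(m+1)$. Both $2m+1$ and $2m+2$ exceed $3$, so $c_{2m+1} = m(m+1) + c_{m+1}$ and $c_{2m+2} = (m+1)^2 + c_{m+1}$; the common term $c_{m+1}$ cancels and
\[ \Delta_{2m+1} = c_{2m+2} - c_{2m+1} = (m+1)^2 - m(m+1) = m+1 = \tfrac{n+1}{2}. \]
This exhausts all cases.

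The only thing requiring care — more a bookkeeping matter than a genuine obstacle — is to evaluate the floors and ceilings of half-integers correctly, and to confirm that every invocation of the $c_n$-recurrence has argument strictly larger than $3$ (which holds as soon as $n \geq 4$) and that $\Delta_{n/2}$ is referenced only when $n/2 \geq 2$. Everything else is elementary algebra, with the cancellation of the $c$-terms doing all the work.
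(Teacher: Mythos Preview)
Your proof is correct and follows essentially the same approach as the paper: verify the base cases directly, then split on the parity of $n$ and substitute the defining recurrence for $c_n$ and $c_{n+1}$. Your choice to write $n=2m$ or $n=2m+1$ makes the floor/ceiling bookkeeping slightly cleaner than the paper's version, but the argument is otherwise identical.
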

\begin{proof}
    The base cases of $n=2,3$ are trivial as $c_3-c_2=2-1=1=\Delta_2$ and $c_4-c_3=5-2=3$.
    Suppose $n>3$, then we may use the recursive definition of $c_n$. If $n$ is odd, then we compute the following.
    \begin{align*}
        c(n+1)-c(n)
        &=\left\lfloor\frac{n+1}{2}\right\rfloor\left\lceil\frac{n+1}{2}\right\rceil+c_{\left\lceil \frac{n+1}{2}\right\rceil}-\left\lfloor\frac{n}{2}\right\rfloor\left\lceil\frac{n}{2}\right\rceil-c_{\left\lceil \frac{n}{2}\right\rceil}\\
        &=\left(\left(\frac{n+1}{2}\right)^2-\left(\frac{n-1}{2}\right)\left(\frac{n+1}{2}\right)\right)+0\\
        &=\frac{n+1}{2}
    \end{align*}
    If $n$ is even, then we may compute the following.
    \begin{align*}
        c(n+1)-c(n)
        &=\left\lfloor\frac{n+1}{2}\right\rfloor\left\lceil\frac{n+1}{2}\right\rceil+c_{\left\lceil \frac{n+1}{2}\right\rceil}-\left\lfloor\frac{n}{2}\right\rfloor\left\lceil\frac{n}{2}\right\rceil-c_{\left\lceil \frac{n}{2}\right\rceil}\\
        &=\left(\left(\frac{n}{2}\right)\left(\frac{n+2}{2}\right)-\left(\frac{n}{2}\right)^2\right)+\left(c_{\frac{n+2}{2}}-c_{\frac{n}{2}}\right)\\
        &=\left(\frac{n}{2}\right)+\Delta_{\frac{n}{2}}
    \end{align*}
    We have verified all of the necessary cases.
\end{proof}

\begin{corollary}\label{lem:delNBounds}
    If $n\geq 2$, then $\frac{n}{2}\leq \Delta_n\leq n$.
\end{corollary}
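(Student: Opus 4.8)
The plan is to prove both inequalities simultaneously by strong induction on $n$, feeding off the recurrence for $\Delta_n$ established in \Cref{lem:firstDiffRecur}. The base cases $n=2$ and $n=3$ are handled directly: $\Delta_2=1$ satisfies $\tfrac{2}{2}\leq 1\leq 2$, and $\Delta_3=3$ satisfies $\tfrac{3}{2}\leq 3\leq 3$.

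For the inductive step with $n>3$, I would split on the parity of $n$. If $n$ is odd, then $\Delta_n=\tfrac{n+1}{2}$ straight from the recurrence, and one checks $\tfrac{n}{2}\leq\tfrac{n+1}{2}\leq n$; the left inequality is immediate and the right is equivalent to $n\geq 1$, so no induction hypothesis is even needed here. If $n$ is even, then $\Delta_n=\tfrac{n}{2}+\Delta_{n/2}$, and since $n>3$ even forces $n/2\geq 2$, the induction hypothesis applies at $n/2$, giving $\tfrac{1}{2}\cdot\tfrac{n}{2}\leq\Delta_{n/2}\leq\tfrac{n}{2}$. Adding $\tfrac{n}{2}$ to all parts gives $\tfrac{n}{2}+\tfrac{n}{4}\leq\Delta_n\leq n$, and since $\tfrac{n}{2}+\tfrac{n}{4}\geq\tfrac{n}{2}$, both desired bounds follow.

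There is essentially no obstacle in this argument; the only point that warrants a moment's care is confirming that $n/2$ lies in the range where the induction hypothesis is available, i.e.\ $n/2\geq 2$, which holds precisely because the even branch of the recurrence is only invoked for $n\geq 4$. Everything else reduces to one-line arithmetic verifications.
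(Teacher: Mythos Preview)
Your proof is correct and follows essentially the same approach as the paper, which merely states that the result is immediate by induction on the recurrence for $\Delta_n$; you have simply filled in the routine details the paper omits.
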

\begin{proof}
    This is immediate by an inductive argument and examining the recurrence for $\Delta_n$.
\end{proof}
\begin{lemma}\label{lem:firstDiffClosed}
    If $n=2^km$ with $m\geq 3$ odd, then $\Delta_n=n$ if $m=3$, and $\Delta_n=n-\frac{m-1}{2}$ when $m>3$. Further, $\Delta_{2^k}=2^k-1$.
\end{lemma}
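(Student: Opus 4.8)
The plan is to unwind the recurrence of \Cref{lem:firstDiffRecur}. Its even case, $\Delta_n = \tfrac{n}{2} + \Delta_{n/2}$ for even $n > 3$, lets us halve the argument repeatedly, and halving $n = 2^k m$ peels off a geometric sum. So the first step I would carry out is to prove, by induction on $k \ge 0$, the telescoped identity $\Delta_{2^k m} = (2^k - 1)m + \Delta_m$ for every odd $m \ge 3$. The base case $k = 0$ is vacuous; for the inductive step, $2^k m \ge 6$ is even and exceeds $3$, so one application of the even-case recurrence gives $\Delta_{2^k m} = 2^{k-1} m + \Delta_{2^{k-1} m}$, and then the inductive hypothesis together with $2^{k-1} + (2^{k-1} - 1) = 2^k - 1$ finishes it.

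The second step is to substitute the value of $\Delta_m$ coming from the two ``small'' cases of \Cref{lem:firstDiffRecur}. When $m = 3$ we use the base value $\Delta_3 = 3$, obtaining $\Delta_{2^k \cdot 3} = 3(2^k - 1) + 3 = 3 \cdot 2^k = n$. When $m > 3$ is odd, the odd-case recurrence gives $\Delta_m = \tfrac{m+1}{2}$, hence $\Delta_{2^k m} = (2^k - 1)m + \tfrac{m+1}{2} = 2^k m - m + \tfrac{m+1}{2} = n - \tfrac{m-1}{2}$, which is the claimed formula (with $k = 0$ covering $n = m$ itself).

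The third step handles the pure powers of two. Here the same halving argument applies, but the descent now bottoms out at the other base case $\Delta_2 = 1$: for $k \ge 1$ one gets $\Delta_{2^k} = 2^{k-1} + 2^{k-2} + \cdots + 2 + \Delta_2 = (2^k - 2) + 1 = 2^k - 1$, the sum being empty when $k = 1$ (in which case this is just the base value).

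I do not expect a genuine obstacle; the only care points are (i) checking that the even-case recurrence is legitimately applicable at each halving step, i.e.\ that every intermediate argument is even and strictly greater than $3$, which holds because $m \ge 3$ forces $2^j m \ge 6$ for all $j \ge 1$, and (ii) keeping the two base values $\Delta_2 = 1$ and $\Delta_3 = 3$ separate, since it is precisely the anomalous value $\Delta_3 = 3$ (rather than $\tfrac{3+1}{2} = 2$) that forces $m = 3$ into its own case in the statement. Implicitly the final clause requires $k \ge 1$, as $\Delta_1$ is not defined.
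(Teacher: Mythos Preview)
Your proof is correct and essentially identical to the paper's: both unwind the even-case recurrence of \Cref{lem:firstDiffRecur} $k$ times to reach $\Delta_{2^k m} = n - m + \Delta_m$ (your $(2^k-1)m + \Delta_m$ is the same expression), then plug in $\Delta_3 = 3$ or $\Delta_m = \tfrac{m+1}{2}$, and treat $n = 2^k$ by descending to $\Delta_2 = 1$. Your version is slightly more careful in checking that each halving step stays within the domain of the even-case recurrence and in noting the implicit $k \ge 1$ for the power-of-two clause.
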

\begin{proof}
    Let $n=2^km$ with $m\geq 3$ odd. Applying \cref{lem:firstDiffRecur} for even numbers $k$ times we find that $\Delta_n=\sum_{i=1}^k\frac{n}{2^i}+\Delta_m=n\left(1-\frac{1}{2^k}\right)+\Delta_m=n-m+\Delta_m$. If $m=3$, we find that $\Delta_n=n-3+3$. Similarly, for $m>3$, we find $\Delta_n=n-m+\frac{m+1}{2}=n+\frac{1-m}{2}$. Applying \cref{lem:firstDiffRecur} $k$ times to $n=2^k$ gives a similar formula $\sum_{i=1}^k2^{k-i}=2^k-1$.
\end{proof}

\begin{theorem}\label{thm:recurrence_closed}
    For $n\geq 2$

    \[c_n=\frac{n^2-n}{2}-\lfloor \log_2(n-1)\rfloor-\sum_{j=2}^{\lfloor \frac{n-2}{2}\rfloor}j\left(\left\lfloor 1+\log_2\frac{n-1}{2j+1}\right\rfloor\right)\]
\end{theorem}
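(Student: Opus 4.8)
The plan is to obtain $c_n$ by summing first differences and then to evaluate the resulting sum by grouping the summation indices according to their odd part. Since $\Delta_i = c_{i+1}-c_i$, telescoping gives $c_n = c_2 + \sum_{i=2}^{n-1}\Delta_i = 1 + \sum_{i=2}^{n-1}\Delta_i$ for all $n \ge 2$ (the sum is empty when $n=2$), so the whole problem reduces to evaluating $\sum_{i=2}^{n-1}\Delta_i$ using the closed form from \Cref{lem:firstDiffClosed}.

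First I would introduce the \emph{defect} $\varepsilon_i := i - \Delta_i$. Reading off \Cref{lem:firstDiffClosed}, if we write $i = 2^k m$ with $m$ odd, then $\varepsilon_i = 1$ when $m=1$, $\varepsilon_i = 0$ when $m=3$, and $\varepsilon_i = \tfrac{m-1}{2}$ when $m \ge 5$; in particular $\varepsilon_i$ depends only on the odd part of $i$. Then
\[\sum_{i=2}^{n-1}\Delta_i = \sum_{i=2}^{n-1} i \;-\; \sum_{i=2}^{n-1}\varepsilon_i = \left(\frac{n^2-n}{2}-1\right) - \sum_{i=2}^{n-1}\varepsilon_i,\]
so $c_n = \frac{n^2-n}{2} - \sum_{i=2}^{n-1}\varepsilon_i$, and it only remains to show that the defect sum equals $\lfloor\log_2(n-1)\rfloor + \sum_{j=2}^{\lfloor(n-2)/2\rfloor} j\lfloor 1+\log_2\frac{n-1}{2j+1}\rfloor$.

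To see this, partition $\{2,\dots,n-1\}$ by odd part. The indices with odd part $1$ are $2,4,\dots,2^{\lfloor\log_2(n-1)\rfloor}$, of which there are exactly $\lfloor\log_2(n-1)\rfloor$, each contributing $1$; these account for the first term. Indices with odd part $3$ contribute nothing. For each odd $m = 2j+1 \ge 5$ (so $j \ge 2$) with $m \le n-1$ --- equivalently $2 \le j \le \lfloor(n-2)/2\rfloor$ --- the indices in $\{2,\dots,n-1\}$ with odd part $m$ are exactly $2^k m$ with $0 \le k \le \log_2\frac{n-1}{m}$, and there are $\lfloor\log_2\frac{n-1}{m}\rfloor + 1 = \lfloor 1+\log_2\frac{n-1}{2j+1}\rfloor$ of them (using $\#\{k\ge 0 : 2^k\le x\} = \lfloor 1+\log_2 x\rfloor$ for real $x\ge 1$), each contributing $j$. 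Summing over $j$ yields the second term and hence the claimed identity; the base cases $n=2,3$ are automatically included since then $\lfloor(n-2)/2\rfloor<2$ makes the sum empty and the formula collapses to $c_2=1$ and $c_3=2$.

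The only delicate point --- the closest thing to an obstacle --- is the index bookkeeping: keeping $i=1$ out of the sum, and noticing that the two exceptional odd parts behave exactly as needed, namely $m=1$ contributes $1$ (not $(1-1)/2$) so that powers of two furnish precisely the $\lfloor\log_2(n-1)\rfloor$ term, while $m=3$ contributes $0$ (not $(3-1)/2$) so that multiples of three drop out entirely. Once the counting identity $\#\{k\ge 0 : 2^k\le x\} = \lfloor 1+\log_2 x\rfloor$ is verified, the rest is routine rearrangement.
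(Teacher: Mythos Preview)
Your proof is correct and follows exactly the approach the paper sketches: telescope $c_n = c_2 + \sum_{i=2}^{n-1}\Delta_i$ and then evaluate the sum by splitting into the three cases $m=1$, $m=3$, $m\geq 5$ of \Cref{lem:firstDiffClosed}. The paper's own proof is a one-sentence outline of precisely this, so your write-up simply fills in the bookkeeping (the defect $\varepsilon_i$ and the count of integers in $\{2,\dots,n-1\}$ with a given odd part) that the paper leaves to the reader.
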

\begin{proof}
    This can be found by taking the telescoping sum $c_n=c_n-c_{n-1}+c_{n-1}-c_{n-2}+\dots -c_3+c_3-c_2+c_2=\Delta_{n-1}+\Delta_{n-2}+\dots+\Delta_2+c_2$, then using the closed forms for $\Delta_{2^km}$ from \Cref{lem:firstDiffClosed} in the cases $m=1,m=3,m\geq 5$.
\end{proof}
This could be massaged further into an expression where the $O(n)$ terms in the sum are combined by common $\lfloor \log_2\frac{n-1}{2j+1}\rfloor$ terms to result in an (too complex) $O(\log n)$ sum, but the authors have not found an $O(1)$ closed form, which would be the desired closed form. We also note that the original recurrence $c_n=\lfloor\frac{n}{2}\rfloor\lceil\frac{n}{2}\rceil+c_{\lceil\frac{n}{2}\rceil}$ provides a way to compute the terms $c_n$ in $O(\log n)$ time already. 
\begin{corollary}\label{cor:recurrence_bounds}
    The recurrence $c_n$ obeys the bounds $\frac{n^2}{3}-1\leq c_n\leq \frac{n^2+n}{3}$ for all $n\geq 2$.
\end{corollary}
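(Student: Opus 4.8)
The plan is a straightforward strong induction on $n$, applying the recurrence $c_n = \lfloor n/2\rfloor\lceil n/2\rceil + c_{\lceil n/2\rceil}$ directly rather than the closed form of \Cref{thm:recurrence_closed}. First I would dispatch the base cases $n=2$ and $n=3$ by hand: $c_2 = 1 \in [\tfrac13, 2]$ and $c_3 = 2 \in [2, 4]$. (The lower bound is tight at $n=3$, which is a useful sanity check that the inequality is not misstated.) For every $n \geq 4$ one has $2 \leq \lceil n/2\rceil \leq n-1$, so the induction hypothesis is available for $m := \lceil n/2\rceil$, and this is the only point where one must be slightly careful about the range.

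For the inductive step I would split on the parity of $n$. If $n = 2m$, then $\lfloor n/2\rfloor\lceil n/2\rceil = m^2$, so $c_n = m^2 + c_m$; plugging in $c_m \leq \tfrac{m^2+m}{3}$ gives $c_n \leq \tfrac{4m^2+m}{3} \leq \tfrac{4m^2+2m}{3} = \tfrac{n^2+n}{3}$, and plugging in $c_m \geq \tfrac{m^2}{3} - 1$ gives $c_n \geq \tfrac{4m^2}{3} - 1 = \tfrac{n^2}{3} - 1$. If $n = 2m+1$, then $\lfloor n/2\rfloor\lceil n/2\rceil = m(m+1)$ and $c_n = m(m+1) + c_{m+1}$; the upper bound $c_{m+1} \leq \tfrac{(m+1)(m+2)}{3}$ yields $c_n \leq \tfrac{(m+1)(4m+2)}{3} = \tfrac{n(n+1)}{3}$ with equality, while the lower bound $c_{m+1} \geq \tfrac{(m+1)^2}{3} - 1$ yields $c_n \geq \tfrac{(m+1)(4m+1)}{3} - 1$, and since $(m+1)(4m+1) = 4m^2+5m+1 \geq 4m^2+4m+1 = (2m+1)^2$ this is at least $\tfrac{n^2}{3} - 1$.

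There is essentially no hard step: the argument is a clean parity split with two applications of the induction hypothesis in each case. The only points demanding attention are (i) confirming $\lceil n/2\rceil$ lands in $\{2,\dots,n-1\}$ so the step is already valid from $n=4$ onward without extra base cases, and (ii) keeping the floor/ceiling bookkeeping straight in the odd case. The fact that the upper bound computation is tight when $n$ is odd (and the lower bound is tight at $n=3$) incidentally shows the constant $\tfrac13$ is optimal in an inequality of this shape.
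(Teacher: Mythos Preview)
Your proposal is correct and follows exactly the approach the paper indicates: the paper's proof simply says ``This is immediate from an inductive proof using the recurrence,'' and you have supplied precisely that induction with the parity split and the base cases checked. There is nothing to add.
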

\begin{proof}
    This is immediate from an inductive proof using the recurrence.
\end{proof}

\subsection{Herder Strategy is optimal against Cat Strategy}
We now need to show that the herder binary search algorithm is optimal against a cat that follows known strategy $C$. Namely, we must show that any other herder strategy $H'$ may be replaced with $H$ without increasing the score.

Proving \Cref{thm:complete} is the target over the next few pages, but we will need a few prerequisite lemmas.

\begin{theorem}\label{thm:complete}
    For all $n\geq 2$, $\cat(K_n)=c_n$, where $c_n$ is defined by the recurrence $c_2=1,c_3=2,c_n=\lfloor\frac{n}{2}\rfloor\lceil\frac{n}{2}\rceil+c_{\lceil \frac{n}{2}\rceil}$ for $n>3$.
\end{theorem}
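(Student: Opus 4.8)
The plan is to establish $\cat(K_n) = c_n$ by proving matching upper and lower bounds. The upper bound $\cat(K_n) \le c_n$ follows immediately: the binary search herder strategy $H$ (Algorithm \ref{alg:herderStrat}) partitions $V(K_n)$ into nearly equal halves, deletes the $\lfloor n/2\rfloor \lceil n/2\rceil$ crossing edges, recurses on the component containing the cat, and bottoms out at $K_2$ or $K_3$; by induction this costs exactly $c_n$ cuts regardless of the cat's play, so $\cat(K_n) \le c_n$ by \Cref{stratRules}. The substance is the lower bound $\cat(K_n) \ge c_n$, and by \Cref{lrScore} (applied with $L$ the cat, $R$ the herder) together with \Cref{KnHCisC}, it suffices to show that the explicit cat strategy $C$ of \Cref{alg:general} scores at least $c_n$ against \emph{every} herder strategy $H'$ — i.e., that binary search $H$ is optimal against $C$, so that $\cat(K_n, H', C) \ge \cat(K_n, H, C) = c_n$.

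First I would set up the induction on $n$, handling the base cases $K_2, K_3$ (and the small-$n$ exceptional partitions of \Cref{alg:nSmall}) by direct inspection. For the inductive step, I would track the game played by $H'$ against $C$ until the first \emph{critical graph} $G$ is reached — the first graph that is not $2$-edge-connected — with $2$-edge-connected component structure recorded as the partition $\lambda = (\lambda_k, \dots, \lambda_1)$ of $n$. Up to this point $C$ has been sitting on a maximum-degree vertex of a $2$-edge-connected graph, so no cut has isolated the cat, and every cut so far was "spent" reducing edge-connectivity. The key accounting claim is: if $t$ cuts have been made to reach $G$, then $t$ plus the value $C$ can still guarantee from $G$ is at least $c_n$. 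This is where the case analysis of \Cref{alg:general} comes in — one argues case by case on $\lambda$ (the near-binary case $\lambda_k - \lambda_{k-1} \le 1$ via \Cref{alg:nearlyBin}; the degenerate cases $\lambda = (3\cdot 2^m, 3\cdot 2^m, 1)$ and $\lambda = (3\cdot 2^m, 1)$; the case $\lambda_k = 1$ reducing to the path strategy; and the generic case where the cat retreats into the largest $2$-edge-connected component) that the cat's guaranteed remaining score, combined with the cuts already forced, meets or exceeds $c_n$. Here I would lean on the structural facts about $(c_n)$ and its first differences $\Delta_n$ proved in \Cref{lem:firstDiffRecur}, \Cref{lem:delNBounds}, and \Cref{lem:firstDiffClosed}: the bound $\Delta_n \ge n/2$ is what makes "an extra cut to disconnect, then recurse into a component of size $\ge n/2$" at least as good for the cat as letting the herder do clean binary search, because merging or fragmenting the graph only forces the herder to spend more crossing-edge cuts than the $c_n$ recurrence budgets.

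The main obstacle I anticipate is the generic critical-graph case: showing that when $H'$ produces a lopsided partition $\lambda$, the total cost (cuts already made to fragment $K_n$ into the $\lambda_i$-pieces connected by bridges, plus the recursively-guaranteed $\cat(K_{\lambda_k})$ from inside the largest piece, plus the bridge edges the cat forces on the way out) is at least $c_n$. This requires a clean lower bound on how many edges must be deleted from $K_n$ to realize a given $2$-edge-connected component partition $\lambda$ — essentially $\sum_{i<j} \lambda_i \lambda_j$ minus the $k-1$ surviving bridges — and then a monotonicity/convexity argument comparing $\left(\sum_{i<j}\lambda_i\lambda_j\right) - (k-1) + c_{\lambda_k}$ against $c_n = \lfloor n/2\rfloor\lceil n/2\rceil + c_{\lceil n/2\rceil}$, using that $c$ is strictly increasing and super-linear (\Cref{cor:recurrence_bounds}) so that concentrating mass in one part and paying for all the cross-edges is never cheaper for the herder than an even split. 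The delicate exceptional partitions $(3\cdot 2^m, 3\cdot 2^m, 1)$ are precisely the places where this inequality is tight and the cat must play locally sub-optimally (as flagged in the discussion after \Cref{alg:nSmall}), so those need the separate bespoke arguments already earmarked in \Cref{alg:weirdCase}. Once the accounting claim is established in all cases, the induction closes and $\cat(K_n) = c_n$; the asymptotic bounds and closed form in \Cref{thm:mainComplete} then follow from \Cref{cor:recurrence_bounds} and \Cref{thm:recurrence_closed}.
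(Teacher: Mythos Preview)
Your high-level plan matches the paper's: establish $\cat(K_n)\le c_n$ via the binary herder $H$, then use \Cref{lrScore} and \Cref{KnHCisC} to reduce the lower bound to showing no herder $H'$ beats $H$ against the explicit cat $C$, and analyze the partition $\lambda$ at the first critical graph. That framework is exactly right.

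There are, however, two genuine gaps in your sketch. First, your accounting formula $\big(\sum_{i<j}\lambda_i\lambda_j\big)-(k-1)+c_{\lambda_k}$ silently assumes the largest $2$-edge-connected component is still a complete $K_{\lambda_k}$, so that the inductive hypothesis applies to it. In general $H'$ may have already cut edges \emph{inside} components before the critical graph is reached, and then the cat's guaranteed residual score is not $c_{\lambda_k}$. The paper handles this with a separate reduction (\Cref{lem:assumeCompComps}) showing that among herder strategies optimal against $C$, one may always assume every $2$-edge-connected component of the critical graph is complete; this lemma is not automatic and requires tracing through every branch of $C$.

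Second, your hope for a ``clean monotonicity/convexity argument'' comparing $\big(\sum_{i<j}\lambda_i\lambda_j\big)-(k-1)+c_{\lambda_k}$ against $c_n$ does not survive contact with examples: for $\lambda=(6,1)$ the left side is $5+c_6=16<17=c_7$, and for $\lambda=(3,3,1)$ it is $13+c_3=15<17=c_7$. The inequality only holds once you credit the cat with the extra points earned by the branch-specific maneuvers in \Cref{alg:nSmall} and \Cref{alg:nearlyBin}, and even then the paper needs a lengthy case split (\Cref{Knl2}, \Cref{KnkBiglk11}, \Cref{KnkBiglk1Bigl2Islk}, \Cref{KnkBiglk1Bigl21}, \Cref{KnkBiglk1Bigl2Mid}), the special lemma \Cref{Kn1weird} for the $(3\cdot 2^m,1)$ pendants, and a computer verification of several hundred small partitions (\Cref{KnSmallCases}). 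So the obstacle you flag as ``main'' is real, but it is not dispatched by a single convexity estimate; you should expect the bulk of the work to live there.
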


We henceforth will assume that $\cat(K_n)\neq c_n$ for some $n$. We will consider the minimal $n$ such that $\cat(K_n)\neq c_n$. For such $n$, if $\cat(K_n,H,C)=\cat(K_n,C)$, then we know that $H,C$ are optimal (by \cref{lrScore} and \cref{KnHCisC}), and $\cat(K_n)=c_n$ as $H$ always scores $c_n$. This is a contradiction, thus $\cat(K_n,H,C)>\cat(K_n,C)$, so that $H'$ must be some strategy that can isolate the cat faster against $C$. Let $H'$ be such a strategy. We will proceed to deduce various properties that $H'$'s critical graph cannot have until we have fully deduced that no such $H'$ can exist as there is no possible corresponding critical graph.

It will be convenient to assume that each $2$-edge-connected component is complete, we show the following lemma to make this assumption without loss of generality.

\begin{lemma}\label{lem:assumeCompComps}
    For any strategy $H'$ that is optimal against $C$, there exists a strategy $H''$ that is optimal against $C$ such that the critical graph $G$ has all $2$-edge-connected components complete.
\end{lemma}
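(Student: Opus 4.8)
The plan is to argue that if $H'$ is optimal against $C$ but leaves some $2$-edge-connected component $B$ of its critical graph $G$ non-complete, then we can modify $H'$ into a strategy $H''$ that postpones cutting the ``missing'' internal edges of $B$ until after $G$ is reached, thereby producing a critical graph with one more complete component (or with $B$ enlarged toward completeness), without increasing the final score. Iterating this finitely many times (each step strictly increases the number of edges present inside $2$-edge-connected components of the critical graph, which is bounded) yields the desired $H''$ with all $2$-edge-connected components complete.

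The key steps, in order, are as follows. First I would fix $H'$ optimal against $C$ and let $G$ be the critical graph it reaches against $C$; suppose some maximal $2$-edge-connected component $B$ on vertex set $W$ is not complete, so there is a pair $u,v\in W$ with $uv\notin E(G)$ yet $uv\in E(K_n)$, meaning $H'$ cut $uv$ at some earlier point while the graph was still $2$-edge-connected. Second, I would define $H''$ to play exactly like $H'$ except it skips the cut of $uv$ (and, more generally, of every internal non-edge of every non-complete $2$-edge-connected component of $G$), deferring those cuts to be performed immediately after the turn on which $G$ would have been reached. Since $C$'s response on a $2$-edge-connected graph depends only on degrees and labels (and on which $2$-edge-connected component structure is present), and since adding back the edge $uv$ only keeps the relevant prefix graphs $2$-edge-connected, the cat under $C$ behaves the same way up to that point — in particular it is still at the analogous vertex when the (now larger, still $2$-edge-connected, or critical-with-more-complete-structure) graph $G''\supseteq G$ is reached. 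Third, I would check the bookkeeping: $H''$ makes exactly the same multiset of cuts as $H'$, just in a different order, so the total number of cuts to isolate the cat is unchanged; hence $\cat(K_n,H'',C)=\cat(K_n,H',C)=\cat(K_n,H',C)$, and $H''$ is still optimal against $C$. Fourth, I would invoke \cref{noPass} / \Cref{subgraphMonotone}-style reasoning only if needed to confirm that deferring cuts never helps the cat beyond what is already accounted for — but the cleaner line is simply that $H''$ and $H'$ use the same cuts, so the scores literally coincide.

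The main obstacle I anticipate is making precise the claim that reordering the cuts does not disturb $C$'s trajectory. The cat strategy $C$ in \Cref{alg:general} branches on whether the current graph is $2$-edge-connected and, once it is not, on the partition $\lambda$; so I must verify that along the modified sequence of cuts the graph stays $2$-edge-connected exactly as long as it needs to, and that when criticality is first reached the induced partition only differs from $\lambda$ by having $B$'s component complete (and possibly relabeled), which does not change $C$'s chosen move — or if it does change the move, it changes it to a vertex of the same component, which is all that matters since the herder only queries the component. Handling the case where several internal non-edges across several components are being restored simultaneously requires care that restoring them does not prematurely merge two components of $G$; but since each such edge lies inside a single $2$-edge-connected component of $G$, restoring it cannot connect distinct components, so the partition $\lambda$ is genuinely preserved up to completing its parts. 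Once this structural stability is nailed down, the rest is routine.
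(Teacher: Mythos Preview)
Your high-level plan---defer the internal cuts $F$ so that the critical graph has complete blocks---matches the paper's. However, your third step contains a real gap. You argue that since $H''$ makes ``exactly the same multiset of cuts as $H'$, just in a different order,'' the scores must coincide. This inference is invalid: the game's length depends on where the cat sits when each cut lands, and strategy $C$ on a $2$-edge-connected graph moves to a \emph{maximum-degree} vertex with lowest label. Skipping the cut of $uv$ raises the degrees of $u$ and $v$, so the max-degree vertex---and hence the cat's move---can change at every intermediate turn. The trajectories under $H'$ and $H''$ need not agree, and once the two games reach the critical structure with the cat at different vertices, the deferred $F$-cuts under $H''$ are being made against a cat in a different position than under $H'$; there is no reason the continuations match. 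Your ``main obstacle'' paragraph correctly identifies this, but the resolution you sketch (``it changes the move to a vertex of the same component, which is all that matters'') is precisely the substantive content of the lemma and is not established by your argument.

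The paper's proof takes a different tack. Its $H''$ simply \emph{omits} the $F$-cuts (taking $F'=\emptyset$) rather than deferring them, so $H''$ reaches a critical graph with the same partition $\lambda$ using $|F|$ fewer cuts. It then proves only the inequality $\cat(K_n,C,H')\geq \cat(K_n,C,H'')$---which suffices for optimality of $H''$---by walking through every branch of the cat strategy (\Cref{alg:general}, \Cref{alg:nearlyBin}, \Cref{alg:nSmall}) at the critical graph and checking, case by case, that the cat's restriction to a component and its subsequent recursive play cannot exploit the $|F|$ saved cuts to out-score $H'$. That explicit case analysis against the algorithm is what your proposal lacks.
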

\begin{proof}
    Let $H'$ be a herder strategy, and let $G$ be the first critical graph reached on play against $C$. Let $C_i$ be the maximal $2$-edge-connected subgraphs' vertices, as defined in \Cref{alg:general}. Let $A=\{uv\not\in G | u\in C_i,v\in C_j,i\neq j\}$ be the set of all edges that needed to be cut in order to reach the critical graph. Let $F=(E(K_n)\setminus E(G))\setminus A$ be the set of all edges that are cut within $2$-edge-connected components of the critical graph.

    \par Strategy $H'$ may now be described as cutting $A\cup F$, say ending with edge $e$ that drops the graph's edge connectivity to one.

    \par We introduce strategy $H''$ which will be to cut $A$, say ending with edge $e$, and then deleting $F'\subseteq F$. When $F'$ is not specified in the following analysis, it should be assumed to be $F'=\emptyset$. Assume that $F\neq \emptyset$ so that $H'\neq H''$ and we don't have a trivial argument.

    \par We must show that $\cat(K_n,C,H')\geq \cat(K_n,C,H'')$. We will do so by considering where the cat moves under strategy $H'$ after edge $e$ is cut.
    \begin{enumerate}
        \item When $n\leq 9$ and $\lambda_k=3$ or $\lambda=(6,1)$, the cat chooses a move according to \Cref{alg:nSmall}.
        \begin{enumerate}
            \item Assume that $\lambda_k=3$. These are not possible as $F\neq \emptyset$ implies that some cut was made within a $2$-edge-connected component, which is not possible if $\lambda_k=3$.
            \item Otherwise assume $\lambda=(6,1)$. In this case, the cat will voluntarily restrict their play to the largest $2$-edge-connected component (until a new critical $\lambda$ is reached for c). As such, they enter case 6 and we will resolve that separately.
        \end{enumerate}
        \item Suppose that $\lambda_k=1$. Note that $\lambda_k=1$ is not possible as $F\neq \emptyset$. 

        \item Suppose that $\lambda=(3(2^m),3(2^m),1)$ for some integer $m$. In this case, the cat moves to the center component (of the three), and after one more cut the cat plays in partition $(3(2^m),1)$, with $|F|$ extra cuts from $F$. In the parallel game with $H''$, the same game is played, but resulting in $K_{3(2^m)}$ with a pendant bridge, and no extra cuts. As play proceeds, we end up in case 6 (or 1(b)), with the sole condition that we need to check that the cat will always be on a vertex that is of degree at least 3, so that the next move is still legal. Noting that if all vertices are of degree at most 2, then $\lambda_k=1$ or $k=1$, and since the cat goes to the vertex of highest degree, we satisfy the needed condition.
    
        \item Suppose that $\lambda_k-\lambda_{k-1}\leq 1$, so that the cat chooses a move according to \Cref{alg:nearlyBin}.

        \begin{enumerate}
            \item  Suppose further that $\lambda_k=\lambda_{k-1}$. If $F$ contains cuts from both components $C_k,C_{k-1}$, then the cat moves to the one with fewer cuts (say $C_k$) and voluntarily restricts themselves to that component. This play by $H'$ scores no worse than the binary search herder $H$, who will use the saved cut from $C_{k-1}$ to cut the bridge and proceed to score the same result. In the case $F$ contains cuts from only one component, the cat freely moves to the other and uses the savings to count as cutting the bridge.
    
            \item Alternately assume that $\lambda_k=\lambda_{k-1}+1$. If $F$ contains any cuts from $C_k$, then the cat voluntarily restricts play to $C_{k-1}$, as if the bridge were cut. Otherwise, all cuts are presently in $C_{k-1}$, and the cat proceeds as normal, providing the cat against $H'$ at least as many cuts as against $H''$.
        \end{enumerate}
        \item Suppose that  $\lambda=(3(2^m),1)$. This mirrors the structure of 1(b), it will collapse to case 6, since the cat will never move to the bridge, and the herder will never cut it until the $\lambda$ changes.
    
        \item Otherwise, the cat will run the final else case in \Cref{alg:general}, and will move to a large connected subgraph. In particular, the only thing that the cat cares about is moving to a vertex with degree at least $2$ (to avoid edge cases of \Cref{alg:nearlyBin}). However, if the herder is approaching a critical graph of the form $(3(2^m),3(2^m),1)$, then the cat will move to the center component. Since the $3(2^m)$ components are $2$-edge-connected, all vertices have degree at least $2$. Combined with the bridges, both components must have at least one vertex of degree at least $3$. Thus if the center component is of size $1$, then the cat is not presently there, as the cat must be on a vertex of degree at least $3$ (as there must be at least $2$ options). This means that in $H',H''$, the cats will always be in the same $2$-edge-connected component, or otherwise $H''$ does not increase the score of $H'$.
    \end{enumerate}
    Thus regardless of how $H'$ forces $C$ to move, we have $\cat(K_n,C,H')\geq \cat(K_n,C,H'')$.
\end{proof}
Due to \Cref{lem:assumeCompComps}, we will henceforth always assume that the critical graph has all $2$-edge-connected components complete, thus enabling inductive arguments.

The following lemma describes a key idea that is used throughout the case arguments that follow.
\begin{lemma}\label{lem:standardCompleteAnalysis}
    If strategy $H'$ against $C$ on minimal counterexample $K_n$ leads to a critical graph with corresponding integer partition $\lambda$ of size $k$ such that $\displaystyle \sum_{1\leq i<j\leq k}\lambda_i\lambda_j-k+\frac{1}{3}\lambda_k^2\geq \frac{1}{3}n^2+\frac{7}{12}n+\frac{1}{4}$, then $H'$ scores at least as high as $H$.
\end{lemma}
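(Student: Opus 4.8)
The plan is to show directly that, under the stated hypothesis on $\lambda$, the cat playing $C$ survives at least as long against $H'$ as against the binary search herder $H$, which scores exactly $c_n$ against $C$ (by \Cref{KnHCisC} and the definition of binary search); this contradicts the standing assumption that $H'$ beats $H$ against $C$, so no such $\lambda$ can arise. First I would count the cuts used to reach the critical graph $G$. By \Cref{lem:assumeCompComps} we may assume every $2$-edge-connected component of $G$ is complete, so $G$ is $K_{\lambda_1},\dots,K_{\lambda_k}$ joined by $k-1$ bridges (the condensation of a connected graph by its $2$-edge-connected components is a tree), and hence $|E(G)|=\sum_{i=1}^k\binom{\lambda_i}{2}+(k-1)$. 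Using $\sum_i\lambda_i=n$, the herder has therefore made exactly
\[
\binom{n}{2}-\sum_{i=1}^k\binom{\lambda_i}{2}-(k-1)=\sum_{1\le i<j\le k}\lambda_i\lambda_j-k+1
\]
cuts by the time $G$ is first reached.

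Next I would track where $C$ sends the cat from $G$. In the generic branch of \Cref{alg:general} the cat moves to, and voluntarily restricts itself to, a maximum component $K_{\lambda_k}$; this move is legal since all bridges of $G$ and all component edges are intact, so the cat can reach $K_{\lambda_k}$ along the condensation tree, and it costs no cut. After the move it is the herder's turn, so by the subgraph-monotonicity principle (\Cref{subgraphMonotone}, together with the no-passing \Cref{noPass}) the cat then survives at least $\cat(K_{\lambda_k},v)$ further cuts, where $v$ is its landing vertex. Since $k\ge 2$ forces $\lambda_k<n$, minimality of the counterexample gives $\cat(K_{\lambda_k})=c_{\lambda_k}$, and vertex-transitivity of complete graphs gives $\cat(K_{\lambda_k},v)=c_{\lambda_k}$. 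Combining with the count above,
\[
\cat(K_n,H',C)\ \ge\ \Bigl(\sum_{1\le i<j\le k}\lambda_i\lambda_j-k+1\Bigr)+c_{\lambda_k}.
\]

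Finally I would close with arithmetic. The lower bound $c_{\lambda_k}\ge\tfrac13\lambda_k^2-1$ from \Cref{cor:recurrence_bounds}, followed by the hypothesis and then $\tfrac7{12}n+\tfrac14\ge\tfrac13 n$, gives
\[
\cat(K_n,H',C)\ \ge\ \sum_{1\le i<j\le k}\lambda_i\lambda_j-k+\tfrac13\lambda_k^2\ \ge\ \tfrac13 n^2+\tfrac7{12}n+\tfrac14\ \ge\ \tfrac{n^2+n}{3}\ \ge\ c_n,
\]
the last step again by \Cref{cor:recurrence_bounds}. Thus $H'$ scores at least $c_n$ against $C$, i.e.\ at least as high as $H$, as claimed. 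The one step needing genuine care — and the main obstacle — is the middle one: confirming that strategy $C$ really does funnel the cat into a maximum complete component, that the condensation‑tree traversal is always available, and that the turn parity lines up so that $\cat(K_{\lambda_k},v)$ is the correct remaining count. The exceptional partitions not governed by the generic branch (for instance $\lambda_k=1$, $\lambda_k-\lambda_{k-1}\le 1$, $\lambda=(3\cdot2^m,\dots)$, and the small‑$n$ cases) are dealt with separately in the surrounding case analysis; one checks that the hypothesis of this lemma is too strong to hold in those regimes (e.g.\ it is vacuous for the partitions with $\lambda_k-\lambda_{k-1}\le 1$), so they need not be revisited here.
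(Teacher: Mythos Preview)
Your inequality chain is correct and essentially matches the paper's proof: both start from $\sum_{i<j}\lambda_i\lambda_j-(k-1)+c_{\lambda_k}$, apply $c_{\lambda_k}\ge\tfrac13\lambda_k^2-1$ from \Cref{cor:recurrence_bounds}, invoke the hypothesis, and then compare to $c_n$. Your final step uses $c_n\le\tfrac{n^2+n}{3}$ directly, whereas the paper unwinds one level of the recurrence to $\lceil n/2\rceil\lfloor n/2\rfloor+c_{\lceil n/2\rceil}$ and bounds the second term; these are equivalent, and yours is arguably cleaner.

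The only genuine issue is your closing paragraph. You assert that the non-generic branches of $C$ can be ignored because ``the hypothesis of this lemma is too strong to hold in those regimes (e.g.\ it is vacuous for the partitions with $\lambda_k-\lambda_{k-1}\le1$)''. This is false, and the paper in fact invokes precisely this lemma in \Cref{KnkBiglk1Bigl21} for partitions $(\lambda_3,\dots,\lambda_3,1,1)$ with $k\ge4$, where $\lambda_k-\lambda_{k-1}=0$. A quick check with $\lambda=(m,m,1,1)$ shows the hypothesis holds for all $m\ge35$, matching the small-case cutoff in \Cref{KnSmallCases}. So the nearlyBin branch is not vacuous and must be covered.

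The fix is short: rather than arguing vacuity, simply observe that every branch of \Cref{alg:general} leaves the cat, after at most one additional herder cut, in a complete component of size $\ge\lambda_k-1$ (in \Cref{alg:nearlyBin} the cat ends in $K_{\lambda_k}$ or $K_{\lambda_{k-1}}$ and in fact gains an extra point), so the lower bound $\sum_{i<j}\lambda_i\lambda_j-(k-1)+c_{\lambda_k}$ on the score of $H'$ against $C$ remains valid regardless of which branch fires. The paper's proof leaves this observation implicit (writing ``the left hand side corresponds to a cat playing strategy $C'$ resulting in complete $K_{\lambda_k}$''), but your explicit branch discussion needs this correction rather than the vacuity claim.
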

\begin{proof}
    This follows through a straight-forward inequality chasing, and will be the foundation for many of the following arguments.
    \begin{align*}
        \sum_{1\leq i<j\leq k}\lambda_i\lambda_j-(k-1)+\cat(\lambda_k)
        &\label{eq:ind}\geq \sum_{1\leq i<j\leq k}\lambda_i\lambda_j-(k-1)+\frac{1}{3}\lambda_k^2-1\\
        &= \sum_{1\leq i<j\leq k}\lambda_i\lambda_j-k+\frac{1}{3}\lambda_k^2\\
        &\geq \frac{1}{3}n^2+\frac{7}{12}n+\frac{1}{4}\\
        &\geq \left\lceil\frac{n}{2}\right\rceil\left\lfloor \frac{n}{2}\right\rfloor+\frac{1}{3}\left\lceil\frac{n}{2}\right\rceil\left(\left\lceil\frac{n}{2}\right\rceil+1\right)\\
        &\geq \left\lceil\frac{n}{2}\right\rceil\left\lfloor \frac{n}{2}\right\rfloor+\cat\left(\left\lceil\frac{n}{2}\right\rceil\right)
    \end{align*}
    By our assumption of $n$ being a minimal counterexample, we get our first and last 
    inequalities from \Cref{thm:mainComplete} on $\lceil\frac{n}{2}\rceil$ and $\lambda_k$.
    
    Noting that the left hand side corresponds to a cat playing strategy $C'$ resulting in complete $K_{\lambda_k}$, as well as the fact that the right hand side corresponds exactly to the herder binary search strategy, this gives us a direct comparison tool for when the structure of $\lambda,n$ is highly restricted.
\end{proof}

Some small cases of our argument are computer-assisted, and code for \texttt{testPartition} is available in the appendix.
\begin{lemma}\label{KnSmallCasesCode}
    The algorithm \texttt{testPartition} correctly computes a lower bound on the cat number of the corresponding herder strategy.
\end{lemma}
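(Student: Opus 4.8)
The plan is to prove the lemma by exhibiting a single invariant satisfied by every recursive call of \texttt{testPartition} and verifying it by induction on the height of the game tree below the current state. Concretely, \texttt{testPartition} operates on an abstract state consisting of an integer partition $\lambda$ (the sizes of the $2$-edge-connected components of a critical graph, all assumed complete by \Cref{lem:assumeCompComps}) together with the bookkeeping the cat strategy $C$ maintains: which block the cat occupies, how many cuts have landed inside each block, and any temporary restrictions imposed in \Cref{alg:nSmall,alg:nearlyBin,alg:general}. The invariant to establish is: \emph{the value returned by \texttt{testPartition} on an abstract state $\sigma$ is at most the number of additional cuts the cat is guaranteed to survive from any genuine game position realising $\sigma$, when the cat plays $C$ and the herder plays adversarially.} Since any herder strategy $H'$ that reaches a critical graph with profile $\lambda$ lands the game in such a position, the assertion that \texttt{testPartition}$(\lambda)$ plus the cuts already made is a valid lower bound on $\cat(K_n,H')$ then follows immediately.

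First I would pin down the correspondence between abstract states and real positions: a partition $\lambda$ with a marked cat-block and interior-cut counters is realised by the graph that is a path of complete blocks $K_{\lambda_1},\dots,K_{\lambda_k}$ joined in sequence by bridges, with the recorded number of interior edges already deleted from each block. Because complete graphs and every move prescribed by $C$ depend only on block sizes and the local data $C$ tracks, all genuine positions realising a given $\sigma$ are interchangeable for counting the cat's guaranteed survival. I would also record the symmetry reductions the code relies on, namely that within one complete block all not-yet-deleted edges are interchangeable and that all bridges between distinct blocks are interchangeable, so that from any abstract state the herder has only $O(k)$ essentially different cuts to consider rather than $\Theta(n^2)$.

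Next comes the induction. The base cases are the terminal and hard-coded states: an isolated cat returns $0$, and a single remaining complete block falls back to the already-established values $\cat(K_2)=1$, $\cat(K_3)=2$, the path value $\cat(P_m)=\lceil\log_2 m\rceil$, and the star value $\cat(S_m)=2$, in each of which \texttt{testPartition} returns exactly (hence at most) the true value. For the inductive step, at a cat-to-move state the code performs the move dictated by $C$ (or, where $C$ permits a choice, a fixed legal representative of it); legality is precisely the content of the side-remarks accompanying \Cref{alg:nSmall,alg:nearlyBin,alg:general} (a maximum-degree vertex of degree at least $3$ is always available when $k\ge 2$ and $\lambda_k\ge 2$; $K_{\lambda_k}$ is $\lambda_{k-1}$-edge-connected, so the two-cut plan of line~\ref{lin:nextTwo} of \Cref{alg:nearlyBin} is feasible), and after recursing the invariant is inherited with the same value. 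At a herder-to-move state the code enumerates every cut up to the symmetries above, updates the interior-cut counters, recomputes the resulting partition, recurses, adds $1$ to each branch value, and returns the minimum; since this minimum is taken over a set containing a representative of every genuine herder move, it is at most the adversarial value from $\sigma$, as the invariant requires.

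The main obstacle I expect is not the induction itself but the faithfulness of two pieces of the code. The herder-move enumeration must be genuinely exhaustive up to symmetry --- a single overlooked family of cuts would turn the returned value into an over-estimate and make the claimed bound false --- and the partition-transition routine must correctly mirror how deleting one edge changes the block/bridge structure in every case: an interior cut leaving its block $2$-edge-connected, an interior cut that splits a block (possibly producing a new bridge or an isolated vertex), a cut of a bridge, and a cut that lowers global edge-connectivity and triggers a re-reading of $\lambda$. Verifying these transition cases, and that the counters $C$ maintains are updated consistently with \Cref{alg:nearlyBin}, is the delicate part; as a consistency check one can confirm that \texttt{testPartition} reproduces $\cat(P_n)=\lceil\log_2 n\rceil$ on the all-ones partition and $c_n$ on the balanced binary-search partitions, and that its output agrees with a brute-force minimax search on the genuine graphs for every small $n$ where that search is still feasible.
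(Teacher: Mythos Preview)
Your proposal rests on a fundamental misreading of what \texttt{testPartition} actually does. You describe it as a recursive game-tree search that maintains an abstract state with interior-cut counters, enumerates every herder cut up to symmetry, recurses on each branch, and returns the minimum. But the code in the appendix does none of this: there is no recursion into \texttt{testPartition}, no enumeration of herder moves, no counters, and no minimum over branches. The function simply computes the closed-form quantity
\[
\sum_{1\le i<j\le k}\lambda_i\lambda_j-(k-1)+c_{\lambda_k}
\]
and then applies a handful of hard-coded additive adjustments ($+1$ or $+2$, or a path correction when $\lambda_k=1$) depending only on the shape of the input partition. It then compares the result to $c_n$.

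Because of this, your entire inductive scaffold---base states, cat-to-move transitions, herder-to-move minima, exhaustiveness of the move enumeration, correctness of a ``partition-transition routine''---has no referent in the algorithm you are asked to verify. What the paper actually proves is that each line of the formula is a valid lower bound on the cuts the cat (playing strategy $C$) is guaranteed: the term $\sum\lambda_i\lambda_j-(k-1)$ is exactly the number of cuts the herder has already spent to reach the critical graph; $c_{\lambda_k}$ is a valid continuation value by the minimal-counterexample hypothesis once the cat retreats to the largest block; and each $+1$ or $+2$ adjustment is justified by exhibiting a concrete tactical gain that $C$ extracts in that specific partition shape (for $\lambda_k=3$ with a pendant, for $\lambda_k-\lambda_{k-1}\le 1$, for $(3,3,1)$, and for $(6,1),(12,1),(24,1)$ by reduction to other already-checked partitions). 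Your proposal would need to be rewritten from scratch around this direct, non-recursive structure.
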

\begin{proof}
    We note that the cost computed in \texttt{testPartition} is $\sum_{1\leq i<j\leq k}\lambda_i\lambda_j-(k-1)+c_{\lambda_k}$, with an adjustment made in each of the following cases.

    First, if $\lambda_k=1$, then we have a path, and know that the cat can proceed to score at least the corresponding path score, $\lceil\log_2n\rceil$ for even paths (as either center is ok), or $\lceil\log_2n\rceil-1$ for odd paths, as the cat may have started on the center of the path.

    If $\lambda=(3,1,\dots)$, then the cat may move to a vertex in $K_3$ non-incident to some pendant edge. After any cut, the cat may move to the center of a $P_3$ (either in $K_3$ or using the other two edges of $K_3$ not cut and the pendant edge) and proceed to score $2$ more, one more than $K_3$ would. This yields the necessary extra point.

    If $\lambda_2-\lambda_1\leq 1$, then the cat may score an extra point beyond the naive analysis by a) if $\lambda_2=\lambda_1$, after the next cut, moving to a complete $K_{\lambda_2}$ which must exist (as either the bridge was cut and the cat is already in one, or it was not and there is a path to the other one). Alternately, b) if $\lambda_2=\lambda_1+1$, then the cat makes $\lambda_1-3$ moves away from any edge incident to the bridge in $K_{\lambda_2}$. After the next cut in $K_{\lambda_2}$, the cat moves to the bridge (on the $\lambda_2$ side). After the next cut in $K_{\lambda_2}$, the cat moves to $K_{\lambda_1}$ adjacent to the bridge. The herder has at this point spend $\lambda_1-1$ cuts. If the herder cuts outside of $K_{\lambda_1}$ at this point, then the cat plays in $K_{\lambda_1}$ scoring $\lambda_1+K_{\lambda_1}\geq K_{\lambda_2}$. Otherwise, at some point in this process, the herder must have cut outside of $K_{\lambda_2}$ when the cat had access to $K_{\lambda_2}$, allowing the cat to play optimally in $K_{\lambda_2}$. 

    The final cases are when $\lambda=(6,1),(12,1),(24,1)$ or $\lambda=(3,3,1)$. In the case of $(3,3,1)$, the cat may move to the center and on any cut, follow the $(3,1)$ strategy for $2$ extra points. In the case of $(6,1)$, the herder will not cut the extra edge (lest they give the extra point needed), and the cat will not leave $K_6$ (lest they lose next turn). Thus both will play in $K_6$. Play will continue until a new partition is reached. When this occurs, this will be a new partition of $K_7$ when the $1$ is appended, and a prior analysis will apply (either $(5, 1, 1), (4, 1, 1, 1), (3, 3, 1), (3, 1, 1, 1, 1), (1, 1, 1, 1, 1, 1, 1)$). Special care should be given to the case when the new partition is all ones, as this may no longer be a path of length $7$, but rather $6$. However, this does not change $\lceil\log_2n\rceil$, and so this does not change the result. Similarly, if $\lambda=(12,1),(24,1)$, only one point is saved relative to binary search. As such, the herder will not cut the extra edge, and play will proceed to a new partition. Since we must end up at a partition of $13,25$ that contains a $1$, we can observe that the only one that seems to save a point is $(6,6,1), (12,12,1)$, respectively. But then the cat will play to the center component, and play in $K_6+e,K_{12}+e$ afterwards, thus gaining the extra point that was missed and they can proceed to play in the largest component.
    
    The additional cases are all discharged with a direct naive analysis, with the sole exception of $(3,3,3)$, which can be discharged by noting it contains a $(3,1)$ substructure which we know allows the cat to score an extra point.

    All told, \texttt{testPartiion} computes a lower bound for the given herder strategy's partitions, and verifying these are all at least $c_n$ shows that no strategy of these cases improves upon the binary herder search strategy $H$.
\end{proof}

\begin{lemma}\label{KnSmallCases}
   In each of the following cases for a critical graph's $\lambda$, the corresponding strategy $H'$ may be replaced by binary search herder $H$ without loss of performance. Namely, the corresponding strategy $H'$ captures the cat on $K_n$ in at least $c_n$ moves.

    \begin{enumerate}
        \item $n<9$. There are 65 different partitions with $2\leq n\leq 8, k\geq 2, 2\not\in\lambda$.
        \item $\lambda_{k-1}=1, k\in \{3,4,5,6,7,8,9,10,11,12\}, \lambda_k\leq m_k$, where $\{m_k\}_3^{12}=\{56,14,9,6,5,4,3,1,1,1\}$, respectively. Partitions are of form $(\lambda_k,1,\dots, 1)$.
        \item $\lambda_3=\lambda_k,\lambda_2=1, k\in \{3,4\}, \lambda_3<m_k$, where $\{m_k\}_3^4=\{57,35\}$, respectively. Partitions are of form $(\lambda_3,1,1),(\lambda_3,\lambda_3,1,1)$.
        \item $k=2$, $n\leq 27$
        \item $k=3, \lambda_k=\lambda_2, \lambda_1\in \{3,4,5,6\}, \lambda_2\leq m_k$, where $\{m_k\}_3^6=\{8,6,6,6\}$, respectively. Partitions are of form $(\lambda_2,\lambda_2,\lambda_1)$
        \item $k=4, \lambda_k=\lambda_2, \lambda_2\leq 4$. Partitions are of form $(\lambda_2,\lambda_2,\lambda_2,\lambda_1)$.
        \item $k=5, \lambda_k=\lambda_2, \lambda_1=1,\lambda_2=3,k=5$. Partition is $(3,3,3,3,1)$.
    \end{enumerate}
    
\end{lemma}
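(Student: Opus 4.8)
The plan is to read each of the seven items as an \emph{explicit finite list} of critical partitions and to certify, one partition at a time, that any herder $H'$ whose first non-$2$-edge-connected graph realises that partition is already forced into at least $c_n$ cuts against $C$. Finiteness is immediate in every case: items~1 and~4 bound $n$ outright; items~2,~3,~5 and~6 fix $k$ and cap the largest part by the explicit constant $m_k$; item~7 is a single partition; and since $2\notin\lambda$ with the $\lambda_i$ weakly increasing, only finitely many partitions meet each description. By \Cref{lem:assumeCompComps} we may also assume every $2$-edge-connected component of the critical graph is complete, so up to the tree of bridges joining the blocks the critical graph is determined by $\lambda$, and the whole lemma collapses to a bounded computation.

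For that computation I would invoke \Cref{KnSmallCasesCode}: for a critical partition $\lambda$ of $n$ with $|\lambda|=k$, \texttt{testPartition} returns
\[ L(\lambda)=\sum_{1\le i<j\le k}\lambda_i\lambda_j-(k-1)+c_{\lambda_k}+\varepsilon(\lambda), \]
which that lemma certifies is a lower bound on the number of cuts $H'$ must make against $C$, for any such $H'$. Here $\varepsilon(\lambda)$ is the correction hard-wired into \texttt{testPartition}: the $\lceil\log_2 n\rceil$ versus $\lceil\log_2 n\rceil-1$ path adjustment when $\lambda_k=1$; the extra cut the cat extracts from a $K_3$ carrying a pendant edge when $\lambda_k=3$ or $\lambda$ contains a $(3,1)$ substructure; the extra cut from the near-binary manoeuvre when $\lambda_k-\lambda_{k-1}\le 1$; and the stalling analysis deferring play until the partition changes in the exceptional shapes $(6,1),(12,1),(24,1),(3,3,1),(3(2^m),3(2^m),1)$. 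One then checks $L(\lambda)\ge c_n$ for every $\lambda$ on the list: for the truly small instances — the $65$ partitions with $n<9$, the partition $(3,3,3,3,1)$ (which contains a $(3,1)$ substructure, so $\varepsilon=1$, to be compared with $c_{13}$), and similar — this is a short hand computation; for the remaining families it is the finite enumeration performed by the \texttt{testPartition} code in the appendix, terminating because each $m_k$ is an explicit constant. Wherever $\sum_{i<j}\lambda_i\lambda_j-(k-1)+c_{\lambda_k}$ already reaches $c_n$ on its own, one may instead quote \Cref{lem:standardCompleteAnalysis}; the arithmetic comparisons themselves are routine given the recurrence and \Cref{cor:recurrence_bounds}.

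Granting $L(\lambda)\ge c_n$ for all listed $\lambda$, the conclusion follows at once: the binary-search herder $H$ isolates the cat on $K_n$ in exactly $c_n$ cuts, so an $H'$ whose critical graph has one of these partitions captures the cat against $C$ in at least $c_n$ cuts and hence is no improvement on $H$; phrased for the ambient minimal-counterexample argument, such an $H'$ cannot witness $\cat(K_n,H',C)>c_n$.

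The delicate part is not the bookkeeping of the enumeration but the correction terms $\varepsilon(\lambda)$: the seven families are precisely the shapes where the naive estimate $\sum_{i<j}\lambda_i\lambda_j-(k-1)+c_{\lambda_k}$ can fall one short of $c_n$, so the cat genuinely has to recover that cut by a concrete local manoeuvre, and the work lies in checking (i) the manoeuvre is legal from wherever $C$ has parked the cat — that the relevant vertices have degree at least $2$, or at least $3$ in the centre-component situations — and (ii) that play rejoins the induction cleanly afterwards, as in $(6,1)$, where cat and herder are both stalled inside $K_6$ until a fresh partition of $K_7$ forms and a previously-handled case takes over (with the harmless caveat that the resulting all-ones partition may describe $P_6$ rather than $P_7$, since $\lceil\log_2 6\rceil=\lceil\log_2 7\rceil$). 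Confirming that these recoveries hold uniformly across the stated ranges of $\lambda_k$, and that no partition in those ranges slips past a handled case, is where essentially all the effort goes.
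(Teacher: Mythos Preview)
Your proposal is correct and follows essentially the same approach as the paper: reduce each of the seven items to a finite enumeration of partitions, invoke \Cref{KnSmallCasesCode} to certify that \texttt{testPartition} returns a valid lower bound $L(\lambda)$ on the score of any $H'$ with critical partition $\lambda$, and then verify $L(\lambda)\ge c_n$ partition by partition via the appendix code. The paper's own proof is exactly this, stated in two sentences. One small slip: in your final clause you write that such an $H'$ ``cannot witness $\cat(K_n,H',C)>c_n$''; the inequality should be $<c_n$, since the herder is the minimiser and the point is that $H'$ cannot do \emph{better} (i.e., isolate the cat in fewer cuts) than binary search.
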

\begin{proof}
    The correctness of \texttt{testPartition} and the fact that the code in input 1 generates all relevant partitions provides a proof by code that these exceptional cases satisfy the lemma. 
\end{proof}
\begin{lemma}\label{Knl2}
    Let $H'$ be an optimal herder strategy on $K_n$, where $n$ is a minimal counterexample to \Cref{thm:mainComplete}. Let $(\lambda_i)$ be the partition corresponding to the critical graph in a game of $C$ as the cat, and $H'$ as the herder. If $k=2$, then $H'$ captures the $C$ on $K_n$ in no fewer than $c_n$ moves.
\end{lemma}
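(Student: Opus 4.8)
The plan is to show directly that, whatever the critical graph $\lambda=(\lambda_2,\lambda_1)$ produced by $H'$ (with $\lambda_1+\lambda_2=n$, $\lambda_2\ge\lambda_1\ge1$, and $\lambda_1,\lambda_2\ne2$), the cat strategy $C$ already forces at least $c_n$ cuts. By \Cref{lem:assumeCompComps} we may assume the critical graph is $K_{\lambda_2}$ joined by a single bridge to $K_{\lambda_1}$, with no internal cuts; this is reached after exactly $\lambda_1\lambda_2-1$ cuts. By the fourth listed case of \Cref{KnSmallCases} ($k=2$, $n\le27$) we may assume $n\ge28$, and minimality of $n$ gives $\cat(K_m)=c_m$ for all $m<n$. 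The argument then splits on whether $\lambda_1=1$ or $\lambda_1\ge3$.

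Suppose $\lambda_1\ge3$. If $\lambda_2-\lambda_1\le1$ the critical graph is near-binary and the cat follows \Cref{alg:nearlyBin}; tracing that strategy from the internally-clean critical graph shows the cat can always reach a pristine $K_{\lambda_2}$ after one more cut and then play it out optimally, scoring at least $\lambda_1\lambda_2+c_{\lambda_2}$, which equals $\lceil n/2\rceil\lfloor n/2\rfloor+c_{\lceil n/2\rceil}=c_n$. If $\lambda_2-\lambda_1\ge2$ the cat instead runs the final clause of \Cref{alg:general}, restricting to $K_{\lambda_2}$ and scoring at least $\lambda_1\lambda_2-1+c_{\lambda_2}$. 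For $\lambda_2-\lambda_1\in\{2,3\}$ a short computation rewrites $c_n-(\lambda_1\lambda_2-1+c_{\lambda_2})$ as $(\lambda_2-\lambda_1)-\Delta_\ell$ for a suitable index $\ell>\lambda_1$, which is $\le0$ since $\Delta_\ell\ge\ell/2$ by \Cref{lem:delNBounds}. For $\lambda_2-\lambda_1\ge4$ I would substitute $c_{\lambda_2}\ge\lambda_2^2/3-1$ and $c_n\le(n^2+n)/3$ from \Cref{cor:recurrence_bounds}, reducing the desired inequality to $n\ge(2\lambda_1^2+6)/(\lambda_1-1)$; this follows from $n\ge2\lambda_1+4$ when $\lambda_1\ge5$ and from $n\ge28$ when $\lambda_1\in\{3,4\}$. (This middle band is exactly the regime of \Cref{lem:standardCompleteAnalysis}, which may be quoted in place of the explicit estimate.)

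Now suppose $\lambda_1=1$, so $\lambda=(n-1,1)$, reached after $n-2$ cuts. Writing $n-1=2^km$ with $m$ odd, \Cref{lem:firstDiffClosed} gives $\Delta_{n-1}\le n-2$ unless $m=3$; hence when $n-1\ne3\cdot2^k$ the cat restricts to $K_{n-1}$ and scores $n-2+c_{n-1}\ge c_n$. The remaining subcase $n-1=3\cdot2^k$, with $k\ge4$ (smaller $k$ being covered by \Cref{KnSmallCases}), is the delicate one: here $\Delta_{n-1}=n-1$ and the naive bound falls exactly one short, so the cat plays the ``wait'' variant of \Cref{alg:general}, remaining in $K_{n-1}$ without committing. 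If the herder ever cuts the pendant bridge, the cat is handed a clean $K_{n-1}$ after $n-1$ cuts and finishes with exactly $(n-1)+c_{n-1}=c_n$; otherwise the herder is eventually forced to split $K_{n-1}$, producing a new critical graph whose partition has $k'\ge3$ parts, at which point the cat re-enters $C$ and the outcome is governed by the subsequent $k\ge3$ lemmas (the number of cuts made to reach that refined structure is the same as if it had been reached directly, so no accounting is lost).

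The main obstacle is the near-binary band together with the exceptional partition $(3\cdot2^k,1)$: in both, the purely arithmetic estimate is tight only up to an additive $1$, so the finer bookkeeping cannot be avoided. The near-binary case requires a careful walk through \Cref{alg:nearlyBin} to verify that the cat can actually bank the decisive extra cut against every herder continuation, and the $(3\cdot2^k,1)$ case does not close within the $k=2$ analysis at all, since it must be handed off to the $k\ge3$ machinery after the cat forces the herder to refine the structure. Everything else reduces to routine inequality-chasing with the recurrence bounds from \Cref{lem:delNBounds}, \Cref{lem:firstDiffClosed}, and \Cref{cor:recurrence_bounds}.
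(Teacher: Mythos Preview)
Your treatment of the $\lambda_1\ge3$ regime is essentially correct and in places more direct than the paper's: where the paper handles the band $\lambda_1+2\le\lambda_2\le2\lambda_1$ by a secondary minimality argument on $\lambda_2-\lambda_1$ (its case 3(a), donating a vertex from the larger to the smaller block), you instead compute $c_n-(\lambda_1\lambda_2-1+c_{\lambda_2})$ explicitly for $\lambda_2-\lambda_1\in\{2,3\}$ via the first differences, and then switch to the asymptotic bounds of \Cref{cor:recurrence_bounds} for $\lambda_2-\lambda_1\ge4$. Both routes reach the same conclusion, and your split is arguably cleaner.

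The genuine gap is in the exceptional sub-case $\lambda=(n-1,1)$ with $n-1=3\cdot2^{k}$. Your sentence ``if the herder ever cuts the pendant bridge, the cat is handed a clean $K_{n-1}$ after $n-1$ cuts'' is only valid when the bridge is the \emph{immediate} next cut; if the herder first makes $j\ge1$ internal cuts in $K_{n-1}$ and only later severs the bridge, the cat is left in a non-pristine $K_{n-1}$ after $n-1+j$ cuts and your accounting does not apply. More importantly, the hand-off ``the outcome is governed by the subsequent $k\ge3$ lemmas'' is not justified as written: \Cref{KnkBiglk11}, \Cref{KnkBiglk1Bigl2Islk}, \Cref{KnkBiglk1Bigl21}, and \Cref{KnkBiglk1Bigl2Mid} are each stated for a herder whose \emph{first} critical graph has $k\ge3$, whereas here the first critical graph has $k=2$ and the refined $k'\ge3$ structure appears only later in play. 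Making this transfer rigorous would require checking that $C$ behaves identically from the refined partition onward regardless of the route taken to reach it, and that the analyses in those later lemmas depend only on the partition shape and not on its being the first critical graph --- neither of which you establish. The paper avoids this entirely by invoking \Cref{Kn1weird}, a self-contained statement that $\cat(K_{3\cdot2^{k}}+e)=c_{3\cdot2^{k}}+1$, which supplies exactly the missing point and closes the case within the $k=2$ analysis without any forward reference.
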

\begin{proof}
    Let $H'$ be a counterexample that is minimal with respect to $n$, and then $\lambda_2-\lambda_1$. Note then that we know that $\cat(K_{\lambda_i})=c_{\lambda_i}$ for $i\in \{1,2\}$ by minimality with respect to $n$. We consider cases based on $\lambda_2-\lambda_1$.
    \begin{enumerate}
        \item $\lambda_2=\lambda_1$. In this case, all we must show is that from here it is an optimal herder move to cut the bridge. If the herder doesn't, the cat strategy is to move to the component that did not get the cut, and play as if the bridge was cut. Thus the strategy $H'$ has an optimal continuation to play as $H$ would, and thus $H'$ does not isolate the cat quicker, a contradiction.
        \item $\lambda_2=\lambda_1+1$. We must show that the herder's strategy is to cut the bridge (say $v_1v_2$ with $v_2$ in the $K_{\lambda_2}$ component, and $v_1$ in the $K_{\lambda_1}$ component), and if they do not, then the cat can score at least as well as if they did cut $v_1v_2$ initially. Assume that the herder cuts $\lambda_1-d$ cuts in $K_{\lambda_2}$, then cuts somewhere else, say $e$, for $d\geq 1$. Then the cat may assume that rather than $e$, the herder's cut was on the bridge, and continue their optimal strategy of moving to the largest $2$-edge-connected component, as it is impossible for a critical graph to have been reached. In this case, the cat gets to act as if the herder had cut the bridge, without loss of score. 
        \par Assume alternately that the first $\lambda_1$ cuts the herder makes are in $K_{\lambda_2}$. Then the cat's strategy is for the first $\lambda_1-2$ cuts to always move to any vertex in $K_{\lambda_2}$ that is not $v_2$ (possible since $K_{\lambda_2}$ is $\lambda_1$-edge-connected, and only $\lambda_1-3$ cuts have been made. After the $(\lambda_1-2)$nd cut, the cat moves to $v_2$. The herder must cut their $(\lambda_1-1)$st cut in $K_{\lambda_2}$ again, leaving the cat available to move to  $K_{\lambda_1}$, say on $v_1$. If the herder's $\lambda_1$st cut is in $K_{\lambda_2}$ or the bridge, the cat continues play in $K_{\lambda_1}$ as if the bridge was cut, scoring $\lambda_1+\cat(\lambda_1)$ which we inductively know is $\lambda_1+c_{\lambda_1}\geq \Delta_{\lambda_1}+c_{\lambda_1}=c_{\lambda_2}$, scoring the cat the necessary score. Alternately, the herder's $\lambda_1$st cut is in $K_{\lambda_1}$, and there were $\lambda_1-1$ cuts in $K_{\lambda_2}$, so the cat may return there and play as if the bridge was cut, again attaining the desired score.
        \item
        \begin{enumerate}
            \item $\lambda_2 \geq \lambda_1+2$, $\lambda_1> \frac{1}{2}(\lambda_2-1)$. When $\lambda_1> \frac{1}{2}(\lambda_2-1)$, we have that $\lambda_2-\lambda_1-1< \frac{\lambda_2-1}{2}\leq \Delta_{\lambda_2-1}=c_{\lambda_2}-c_{\lambda_2-1}=\cat(K_{\lambda_2})-\cat(K_{\lambda_2-1})$ by induction and \Cref{lem:delNBounds}. Rearranging and adding $\lambda_1\lambda_2$ to both sides, we get that $\lambda_1\lambda_2-\lambda_1+\lambda_2-1+\cat(\lambda_2-1)< \lambda_2\lambda_1+\cat(\lambda_2)$. This can be factored as $(\lambda_2-1)(\lambda_1+1)+\cat(\lambda_2-1)\leq \lambda_2\lambda_1-1+\cat(\lambda_2)$. Note that the right hand corresponds to the score of the game under herder strategy $H$, and the left hand corresponds to the score of the game under an altered strategy $H'$ where the partition donates one vertex from the larger component to the smaller one, and then fully splits the components and recurses. Thus a slight alteration closer to the proposed optimal strategy does not lose performance, and this violates our choice of $H'$ as minimal with respect to $\lambda_2-\lambda_1$.
            
            \item Set $p=\lambda_2/n$. We observe the following chain of inequalities (note that the left hand side corresponds to the game played where the herder cuts $pn$ from $(1-p)n$ and leaves one bridge).
            \begin{align*}
                (pn)(1-p)n-1+c_{pn}
                &=n^2p(1-p)+c_{pn}-1\\
                &\geq n^2p(1-p)+\frac{(pn)^2}{3}-2\\
                &=n^2p(1-p)+\frac{p^2n^2}{3}-2\\
                &=n^2\left(p-p^2+\frac{p^2}{3}\right)-2\\
                &=n^2\left(p-\frac{2p^2}{3}\right)-2\\
                &=n^2\left(\frac{n-\lambda_1}{n}-\frac{2(\frac{n-\lambda_1}{n})^2}{3}\right)-2\\
                &=n^2\left(1-\frac{\lambda_1}{n}-\frac{2}{3n^2}(n^2-2n\lambda_1+\lambda_1^2)\right)-2\\
                &=n^2-\lambda_1n-\frac{2n^2}{3}+\frac{4n\lambda_1}{3}-\frac{2\lambda_1^2}{3}-2\\
                &=\frac{n^2}{3}+\frac{n\lambda_1}{3}-\frac{2\lambda_1^2}{3}-2\\
                &=\frac{4n^2+4n\lambda_1-8\lambda_1^2-24}{12}
            \end{align*}
            On the other hand, we look at the direct binary search strategy's bounds
            \begin{align*}
                \frac{4n^2+7n+3}{12}
                &=\frac{n^2+n}{4}+\frac{n^2+4n+3}{12}\\
                &=\frac{n+1}{2}(\frac{n}{2})+\frac{(\frac{n+1}{2})^2+\frac{n+1}{2}}{3}\\
                &\geq\lceil\frac{n}{2}\rceil\lfloor\frac{n}{2}\rfloor+\frac{\lceil\frac{n}{2}\rceil^2+\lceil\frac{n}{2}\rceil}{3}\\
                &\geq\lceil\frac{n}{2}\rceil\lfloor\frac{n}{2}\rfloor+c_{\lceil\frac{n}{2}\rceil}
            \end{align*}
            Multiplying both by 12, we now have a goal of showing 
            \[4n^2+4n\lambda_1-8\lambda_1^2-24\geq 4n^2+7n+3\]
            If we show this, we can combine the two inequalities, and show that a direct comparison between $H'$ and $H$ shows that $H'$ doesn't lower the score relative to $H$.
            Indeed, this follows from $4n\lambda_1-8\lambda_1^2\geq 7n+27$.

            We must show that $4\lambda_1(n-2\lambda_1)\geq 7n+27$. We note that $\lambda_1\leq \frac{1}{2}(\lambda_2-1)$ implies that $2\lambda_1\leq \lambda_2$, and thus $\lambda_2-\lambda_1\geq \frac{\lambda_1+\lambda_2}{3}=\frac{n}{3}$. Assume that $\lambda_1\geq 6$ and $n\geq 27$.
            \begin{align*}
                4\lambda_1(n-2\lambda_1)
                &=4\lambda_1(\lambda_2-\lambda_1)\\
                &\geq 24(\lambda_2-\lambda_1)\\
                &\geq 24(\frac{n}{3})\\
                &=8n
                &\geq 7n+27
            \end{align*}
            If $n<27$ and $\lambda_1\geq 6$, we note that this is covered by \Cref{KnSmallCases}
            Lastly, each of $\lambda_1\in \{1,3,4,5\}$ we can check by hand. For $\lambda_1\in \{3,4,5\}$, we get the target equation $4\lambda_1(n-2\lambda_1)\geq 7n+27$ true for $\lambda_1=3, n\geq 20$, $\lambda_1=4, n\geq 18$, $\lambda_1=5, n\geq 18$. Again, these exceptional small cases are all covered by \Cref{KnSmallCases}. Finally, if $\lambda=(n-1,1)$, then the herder has cut $n-2$ cuts to end up with $K_{n-1}$ with a tail. We know that if the cat plays in $K_{n-1}$ only, they will score $n-2+c_{n-1}$. Suppose for contradiction that this scores better than binary search, so that $n-2+c_{n-1}<c_n$. Then $n-2<\Delta_{n-1}$. Set $n'=n-1$, so that $n'-1<\Delta_{n'}$. Noting that $\Delta_{n'}\leq n'$ from \Cref{lem:delNBounds}, we obtain that $\Delta_{n'}=n'$, and thus that \Cref{lem:firstDiffClosed} tells us that $n'=3(2^a)$ for some integer $a$, as no other type of integer can have $\Delta_{n'}=n'$. But that means that the herder has cut to the conditions of \Cref{Kn1weird}, and we actually know that the cat can achieve one more point than otherwise expected. Revisiting the analysis, this means that the herder does not lose anything by cutting the edge pendant, and so without loss of generality the equation now becomes $n-1+c_{n-1}<c_n$, now implying that $n'<\Delta_{n'}$, a contradiction.
        \end{enumerate}
    \end{enumerate}
    Thus we have seen that if $k=2$, regardless of how $H'$ works, $C$ will avoid capture in $K_n$ for at least $c_n$ moves.
\end{proof}
\begin{lemma}\label{KnkBiglk11}
    Let $H'$ be an optimal herder strategy on $K_n$, where $n$ is a minimal counterexample to \Cref{thm:mainComplete}. Let $(\lambda_i)$ be the partition corresponding to the critical graph in a game of $C$ as the cat, and $H'$ as the herder. If $k\geq 3,$ and $\lambda_{k-1}=1$, then $H'$ captures the cat playing strategy $C$ on $K_n$ in no fewer than $c_n$ moves.
\end{lemma}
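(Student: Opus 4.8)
The plan is to reduce the entire lemma to the single sufficient condition of \Cref{lem:standardCompleteAnalysis}, with the finitely many parameters it misses being exactly the ones already verified by the \texttt{testPartition} code in \Cref{KnSmallCases}.

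First I would pin down the shape of the critical graph. Since the parts of $\lambda$ are nondecreasing and $\lambda_{k-1}=1$, in fact $\lambda_1=\cdots=\lambda_{k-1}=1$; moreover $\lambda_k\neq 2$, because a $2$-edge-connected component has at least three vertices. Thus $\lambda=(\lambda_k,1,\dots,1)$ with $\lambda_k\in\{1\}\cup\{3,4,5,\dots\}$, and by \Cref{lem:assumeCompComps} we may assume the critical graph is a complete $K_{\lambda_k}$ together with $k-1$ pendant vertices attached by $k-1$ bridges forming a tree. Here $n=\lambda_k+(k-1)$, so both $\lambda_k$ and $\lceil n/2\rceil$ are strictly below $n$; hence the minimality of $n$ that underpins \Cref{lem:standardCompleteAnalysis} is available, and the cat strategy $C$ (in its final \emph{else} branch when $n>9$, and as the path strategy when $\lambda_k=1$) voluntarily restricts play to the intact $K_{\lambda_k}$, which is exactly the situation its left-hand side quantifies.

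Next I would substitute this $\lambda$ into \Cref{lem:standardCompleteAnalysis}. Writing $a=\lambda_k$ and $b=k-1\ge 2$, one has $\sum_{1\le i<j\le k}\lambda_i\lambda_j=\binom{b}{2}+ab$, and after clearing denominators and using $n=a+b$ the hypothesis
\[
\sum_{1\le i<j\le k}\lambda_i\lambda_j-k+\tfrac13\lambda_k^2\;\ge\;\tfrac13n^2+\tfrac{7}{12}n+\tfrac14
\]
collapses to the single polynomial inequality
\[
a(4b-7)+2b^2-25b-15\;\ge\;0 .
\]
For each fixed $b\ge 2$ the coefficient $4b-7$ is positive, so this is an increasing linear condition on $a$: it holds for every admissible $a$ as soon as $b\ge 12$, and otherwise precisely when $a\ge 57,15,10,7,6,5,4$ for $b=2,3,4,5,6,7,8$ respectively, and when $a\ge 3$ for $b\in\{9,10,11\}$ (here the impossible value $a=2$ is what closes the small window). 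Whenever the inequality holds, \Cref{lem:standardCompleteAnalysis} yields $\cat(K_n,C,H')\ge c_n$, which is the claim.

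Finally I would dispatch the complementary cases. These are the partitions $(\lambda_k,1,\dots,1)$ with $\lambda_k$ strictly below the threshold just found, i.e. $k\in\{3,\dots,9\}$ with $\lambda_k\le 56,14,9,6,5,4,3$ respectively, together with the all-ones partitions $\lambda_k=1$ for $k\le 12$ (for $k\ge 13$ the displayed inequality already covers $a=1$). Every one of these is among the partitions enumerated in \Cref{KnSmallCases}(1)--(2) — indeed the constants $\{m_k\}_{k=3}^{12}=\{56,14,9,6,5,4,3,1,1,1\}$ there are exactly these threshold-minus-one bounds — so they are discharged by the verified code. The one real obstacle is precisely this bookkeeping: one must check that the threshold computed from the polynomial lines up, case by case, with the $m_k$ of \Cref{KnSmallCases}, so that no partition $(\lambda_k,1,\dots,1)$ slips between the code's finite list and the asymptotic bound; everything else is routine algebra and an appeal to \Cref{lem:standardCompleteAnalysis}.
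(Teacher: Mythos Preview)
Your approach is correct and essentially identical to the paper's. After the substitution $k=b+1$, $\lambda_k=a$, your polynomial $a(4b-7)+2b^2-25b-15\ge 0$ is precisely the paper's inequality $2k^2+4k\lambda_k-29k-11\lambda_k+12\ge 0$ from its case $k>3$, and your threshold bookkeeping lines up exactly with the constants $m_k$ of \Cref{KnSmallCases}. The only difference is that the paper handles $k=3$ separately via a short $\Delta$-bound comparison instead of deferring the partitions $(\lambda_3,1,1)$ with $\lambda_3\le 56$ to the code; the paper itself remarks immediately after the proof that your uniform route works and was avoided only as ``a sufficient nuisance.''
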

\begin{proof}
    \par Let $H'$ be a counterexample that results in minimal $n$, then, of those, with minimal $k$.
    \begin{enumerate}
        \item  $k=3,\lambda_{k-1}=1$: This must be a partition of the form $(n-2,1,1)$. We chase inequalities. Since $n\geq 6$, so that $\lfloor\frac{n}{2}\rfloor\geq 3$, and thus $n-4\geq \lceil\frac{n}{2}\rceil-1\geq \Delta_{\lceil\frac{n}{2}\rceil-1}$ by \Cref{lem:delNBounds}. Expanding $n=\lceil\frac{n}{2}\rceil+\lfloor\frac{n}{2}\rfloor$ and adding $\lceil\frac{n}{2}\rceil\lfloor\frac{n}{2}\rfloor-\lceil\frac{n}{2}\rceil\lfloor\frac{n}{2}\rfloor$ as a $0$ we may find that $2n-5+\left(\lceil\frac{n}{2}\rceil\lfloor\frac{n}{2}\rfloor-\lceil\frac{n}{2}\rceil-\lfloor\frac{n}{2}\rfloor+1-\lceil\frac{n}{2}\rceil\lfloor\frac{n}{2}\rfloor\right)\geq \Delta_{\lceil\frac{n}{2}\rceil-1}$. Factoring, we may obtain that $2n-5+\left(\lceil\frac{n-2}{2}\rceil\lfloor\frac{n-2}{2}\rfloor-\lceil\frac{n}{2}\rceil\lfloor\frac{n}{2}\rfloor\right)\geq \Delta_{\lceil\frac{n}{2}\rceil-1}$. Finally rearranging this using $\Delta_{\lceil\frac{n}{2}\rceil-1}= c_{\left\lceil\frac{n}{2}\right\rceil}-c_{\left\lceil\frac{n-2}{2}\right\rceil}=\cat(\lceil\frac{n}{2}\rceil)-\cat(\lceil\frac{n-2}{2}\rceil)$, we find that $2n-5+\lceil\frac{n-2}{2}\rceil\lfloor\frac{n-2}{2}\rfloor+\cat(\lceil\frac{n-2}{2}\rceil)\geq \lceil\frac{n}{2}\rceil\lfloor\frac{n}{2}\rfloor+\cat(\lceil\frac{n}{2}\rceil)$. This describes our direct comparison, since the right hand side is the normal binary search strategy, while the left hand side represents the cost of the $(n-2,1,1)$ strategy that $H'$ is using. Indeed, there are $1(n-2)+1(n-1)+1(1)-2=2n-5$ cuts to reach the critical graph, after which the cat will play in a complete graph on $n-2<n$ vertices, which must inductively follow the known optimal strategy.
        \item $k>3,\lambda_{k-1}=1$: We will show that this strategy is strictly worse than the binary search strategy by direct comparison. We begin by showing that either we have small cases evaluated in \Cref{KnSmallCases}, or $2k^2+4k\lambda_k-29k-11\lambda_k+12\geq 0$. If $k=4$, then $\lambda_k\leq 14$ is in the small cases. Similarly, for $k\in (5,6,7,8,9,10,11,12,13,14)$, then $\lambda_k\leq (9,6,5,4,3,1,1,1)$, respectively, are all in the small cases. Otherwise, for $k<15$, directly substituting in the corresponding values of $k$ and using the bounds on $\lambda_k$ (noting $\lambda_k$ is an integer and is not $2$) give the claimed inequality, $2k^2+4k\lambda_k-29k-11\lambda_k+12\geq 0$. Finally, for $k\geq 15$, it is easy to rearrange the inequality as $k(2k-29)+\lambda_k(4k-11)+12\geq 0$, which is transparently true for $k\geq 15$. This inequality is sufficient to verify the condition of \cref{lem:standardCompleteAnalysis} (noting $\lambda=(\lambda_k,1,\dots, 1)$ implies $n=\lambda_k+k-1$ and $\displaystyle\sum_{1\leq i<j\leq k}\lambda_i\lambda_j=\lambda_k(k-1)+\frac{k(k-1)}{2}$), and thus $H'$ is not a counterexample.
    \end{enumerate}
    Thus in this case the herder can again not increase performance against $C$ relative to binary search.
\end{proof}
Note that we could have put $k=3$ in with the second analysis easily, it just adds the case of $k=3,\lambda_k\leq 56$ to the small cases, which may be viewed as a sufficient nuissance to want a different way to handle it.

The case of partitions of the form $(3(2^a),3(2^a),1)$ will require special consideration. We will need the following lemma which presupposes $\cat(K_n)=c_n$ for small $n$. We will ensure that this condition is met whenever we use \Cref{Kn1weird}.
\begin{lemma}\label{Kn1weird}
    For all $a\in\mathbb{N}$, if $\cat(K_{n})=c_{n}$ for $2\leq n\leq 3(2^a)+1$ and $G$ is $K_{3(2^a)}$ with one extra pendant edge, then $\cat(G)=\cat(K_{3(2^a)})+1$.
\end{lemma}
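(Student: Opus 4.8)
The plan is to prove the two inequalities separately. Throughout, write $N=3\cdot2^{a}$, let $u\in V(K_N)$ be the vertex carrying the pendant, and let $w$ be the pendant leaf, so that $G=K_N+uw$.

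\emph{Upper bound.} On its first turn the herder cuts the bridge $uw$. If the cat started on $w$ it is isolated at once; otherwise the cat is permanently confined to $K_N$ (the now-isolated $w$ is unreachable), so the herder completes the game with an optimal $K_N$ strategy, which by hypothesis costs $\cat(K_N)=c_N$ further cuts. Hence $\cat(G)\le 1+c_N=\cat(K_N)+1$.

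\emph{Lower bound.} I would give a cat strategy surviving $c_N+1$ cuts, starting from a vertex $v_0\neq u$; starting on $u$ is fatal, since in the final $K_3$ the herder can delete the edge not incident to $u$ and trap the cat at the centre of a star, so the choice $v_0\neq u$ is essential. The cat plays an optimal $K_N$ strategy on the complete part while holding the pendant in reserve as a spare cut, and I would organise this as an induction on $a$. The base case $a=0$ is $K_3+w$: a short check over the herder's possible first moves shows that from a non-$u$ vertex the cat survives exactly $3=c_3+1$ cuts. For the inductive step, the only way the binary-search herder ever disconnects $K_N$ is into two complete halves of size $N/2=3\cdot2^{a-1}$; the cat keeps to whichever half contains $u$ (and hence $w$) and, before the last crossing edge is cut, positions itself at a non-$u$ vertex of that half — both possible because the cat moves every turn and the two halves stay joined as long as one crossing edge survives. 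This reduces the game to $K_{N/2}+w$ from a non-$u$ vertex, and the inductive hypothesis gives a total of $\lfloor N/2\rfloor\lceil N/2\rceil+(c_{N/2}+1)=c_N+1$, using $c_N=\lfloor N/2\rfloor\lceil N/2\rceil+c_{\lceil N/2\rceil}$ and $c_{\lceil N/2\rceil}=c_{N/2}$.

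The main obstacle is handling herder deviations: cutting the bridge in the middle of the $K_N$ game, making internal non-separating cuts in $K_N$, or splitting $K_N$ unevenly. A mid-game bridge cut is a passing move relative to the $K_N$ game, so in the spirit of \Cref{noPass} it ought to cost the herder a free cut, but applying \Cref{noPass2} to the cat's forced reply recovers only $c_N$, losing the decisive $+1$. My proposed fix is a dichotomy: if the herder has already strayed from the binary-search line, then the cat's current cat number in $K_N-F$ strictly exceeds the on-schedule value $c_N-|F|$, so the single point given up by the forced reply after a bridge-pass is absorbed; and if the herder has stayed on that line, the cat is still shadowing $u$ and will collect the extra cut either in the $K_3+w$ endgame or by escaping to $w$ should the herder never touch the bridge. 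Turning ``every herder line falls into one of these two cases'' into a proof — in particular reducing unbalanced splits and internal cuts to the already-established structural and small-case lemmas and to the hypothesis that $\cat(K_m)=c_m$ for $m\le N+1$ — is where the real work lies.
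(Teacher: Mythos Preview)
Your upper bound matches the paper's and is fine. The lower bound, however, is genuinely incomplete: you yourself flag that handling arbitrary herder deviations (mid-game bridge cuts, unbalanced splits, internal cuts) is ``where the real work lies,'' and the dichotomy you sketch is not a proof. In particular, the claim that any off-binary-search herder line leaves the cat with \emph{strictly} more than $c_N-|F|$ in $K_N-F$ is exactly the kind of statement whose proof is as hard as the main theorem for complete graphs, so you are not entitled to it here.

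The paper sidesteps all of this with a one-line indirect argument that exploits the \emph{full} hypothesis --- specifically the value at $N+1$, which you never use. Write $N=3\cdot2^a$. Observe that $G$ sits inside $K_{N+1}$: starting from $K_{N+1}$, the herder can reach $G$ by cutting $N-1$ edges (all but one edge incident to a chosen vertex). This is a legitimate herder strategy on $K_{N+1}$, so
\[
\cat(K_{N+1}) \;\le\; (N-1)+\cat(G).
\]
By hypothesis $\cat(K_{N+1})=c_{N+1}$, and by \Cref{lem:firstDiffClosed} we have $\Delta_N=c_{N+1}-c_N=N$ precisely because $N=3\cdot2^a$. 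Hence
\[
\cat(G)\;\ge\;c_{N+1}-(N-1)\;=\;c_N+\Delta_N-(N-1)\;=\;c_N+1,
\]
which is the lower bound. No cat strategy needs to be constructed, no herder deviations need to be tracked, and the special form $3\cdot2^a$ enters only through $\Delta_N=N$. Your inductive construction is aimed at re-proving, in this special case, a large chunk of the $K_n$ analysis; the paper instead leverages that analysis (packaged in the hypothesis $\cat(K_{N+1})=c_{N+1}$) directly.
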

\begin{proof}
    It is clear that $\cat(G)\leq \cat(K_{3(2^a)})+1$ since the herder may cut the pendant edge, then play optimally in $K_{3(2^a)}$.

    Suppose that $\cat(G)<\cat(K_{3(2^a)})+1$. Then consider a game on $K_{3(2^a)+1}$. A possible partition is $(3(2^a),1)$. Such a position requires $3(2^a)-1$ cuts to reach, and the cat will optimally proceed to score $\cat(G)$. As such, this partition overall scores $3(2^a)-1+\cat(G)\geq \cat(K_{3(2^a)+1})$. We thus have that 
    \begin{align*}
        \cat(K_{3(2^a)})+3(2^a)
        &=\cat(K_{3(2^a)})+\Delta_{3(2^a)}&(\Cref{lem:firstDiffClosed})\\
        &=\cat(K_{3(2^a)})+c_{3(2^a)+1}-c_{3(2^a)}\\
        &=\cat(K_{3(2^a)})+\cat(K_{3(2^a)+1})-\cat(K_{3(2^a)})\\
        &=\cat(K_{3(2^a)+1})\\
        &\leq 3(2^a)-1+\cat(G)\\
        &<3(2^a)-1+\cat(K_{3(2^a)})+1\\
        &=\cat(K_{3(2^a)})+3(2^a)
    \end{align*}
    We have thus deduced a contradiction, and so we must have that $\cat(G)=\cat(K_{3(2^a)})+1$.
\end{proof}
Now we may resume with our usual minimal counter-example lemmas. We next assume that there is only one small component, and all other components are the same size.
\begin{lemma}\label{KnkBiglk1Bigl2Islk}
    Let $H'$ be an optimal herder strategy on $K_n$, where $n$ is a minimal counterexample to \Cref{thm:mainComplete}. Let $(\lambda_i)$ be the partition corresponding to the critical graph in a game of $C$ as the cat, and $H'$ as the herder. If $k>2, \lambda_{k-1}\geq 3$, and $\lambda_2=\lambda_k$, then $H'$ captures the cat playing $C$ on $K_n$ in no fewer than $c_n$ moves.
\end{lemma}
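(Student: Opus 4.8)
The plan is to bound from below the number of cuts that $C$ forces against an arbitrary $H'$ whose critical graph has the prescribed shape, and to show this bound already reaches $c_n$, contradicting the assumption that $H'$ beats binary search. By \Cref{lem:assumeCompComps} I may assume every $2$-edge-connected component of the critical graph $G$ is complete. The hypotheses $\lambda_{k-1}\ge 3$ and $\lambda_2=\lambda_k$ force $\lambda_2=\dots=\lambda_k=:\ell\ge 3$, and since $2\notin\lambda$ we have $\lambda_1\in\{1\}\cup\{3,\dots,\ell\}$; so $G$ is a tree of blocks built from $k-1$ copies of $K_\ell$ and one copy of $K_{\lambda_1}$ joined by $k-1$ bridges, and reaching $G$ costs exactly $N_0:=\sum_{1\le i<j\le k}\lambda_i\lambda_j-(k-1)$ cuts.

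\textbf{The key lower bound.} The heart of the argument is that, against $H'$, $C$ forces at least $N_0+1+c_\ell$ cuts whenever $\lambda\ne(3(2^a),3(2^a),1)$. Here $C$ runs \Cref{alg:nearlyBin} in its $\lambda_k=\lambda_{k-1}$ branch: it first moves into an $\ell$-block, and the herder responds with one cut $e$. If $e$ lies in the cat's block then $G-e$ is still connected (each block is $2$-edge-connected and $\ell\ge 3$), so some other, still cut-free, $\ell$-block — one exists because $k\ge 3$ — is reachable and $C$ moves to it; otherwise the cat's $\ell$-block is untouched and $C$ stays there. Either way, after $N_0+1$ cuts $C$ is confined to a cut-free $K_\ell$, and restricting play there forces $\cat(K_\ell)=c_\ell$ further cuts. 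For the excluded $\lambda=(3(2^a),3(2^a),1)$, $C$ instead follows \Cref{alg:general}, line~\ref{alg:weirdCase}; minimality of $n$ makes the hypothesis of \Cref{Kn1weird} hold, so after moving to the central block and one more cut $C$ is confined to $K_{3(2^a)}$ with a pendant edge, worth $c_{3(2^a)}+1$ further cuts. Combining with $N_0$, and using $\Delta_{3(2^a)}=3(2^a)$ from \Cref{lem:firstDiffClosed} together with $c_{3(2^{a+1})+1}=3(2^a)(3(2^a)+1)+c_{3(2^a)+1}$, a direct computation shows this total is exactly $c_n$.

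\textbf{Finishing the inequality.} It then remains to verify $N_0+1+c_\ell\ge c_n$ for the surviving partitions. Writing $K:=k-1$ and $n=K\ell+\lambda_1$, the target is $\binom{K}{2}\ell^2+K\lambda_1\ell-K+1+c_\ell\ge c_n$. For $\lambda_1\ge 3$ I would substitute $c_\ell\ge\tfrac{\ell^2}{3}-1$ and $c_n\le\tfrac{n^2+n}{3}$ from \Cref{cor:recurrence_bounds}; the resulting inequality is a polynomial in $\lambda_1$ with negative leading term, hence concave in $\lambda_1$, so it suffices to check it at $\lambda_1=3$ and $\lambda_1=\ell$, both of which hold once $\ell$ (for $k=3$) or $k$ exceeds a small threshold — the finitely many partitions below the threshold being among the entries of \Cref{KnSmallCases} with $\lambda_2=\lambda_k$. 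For $\lambda_1=1$ the crude bounds are too weak (for $k=3$ they lose by a constant), so I would use the exact recurrence: when $k=3$, $c_{2\ell+1}=\ell(\ell+1)+c_{\ell+1}=\ell^2+\ell+c_\ell+\Delta_\ell$, so the claim reduces to $\Delta_\ell\le\ell-1$, which holds by \Cref{lem:delNBounds} and \Cref{lem:firstDiffClosed} exactly because $\ell\ne 3(2^a)$ (the case $\ell=3(2^a)$ being $\lambda=(3(2^a),3(2^a),1)$, handled above, or $\ell=3$, which lies in the $n<9$ cases); when $k\ge 4$ the crude bounds go through, with $(3,3,3,1)$ the sole exception, handled in \Cref{KnSmallCases}. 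In all cases $C$ secures at least $c_n$ cuts, as desired.

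\textbf{Main obstacle.} I expect the crux to be making the ``$+1$'' airtight: one must check, for every possible block-tree shape of the critical graph, that after the herder's first response the cat can still occupy a $K_\ell$ with no internal cuts. Intertwined with this is the bookkeeping of which small partitions must be ceded to the verified code of \Cref{KnSmallCases,KnSmallCasesCode} rather than settled by the polynomial estimate — in particular the $\lambda_1=1$ corner, where one is forced onto the sharp first-difference inequality $\Delta_\ell\le\ell-1$ rather than the cruder quadratic bounds.
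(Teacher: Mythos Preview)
Your proposal is correct and follows essentially the same architecture as the paper: establish the lower bound $N_0+1+c_\ell$ for the cat's score, handle the exceptional partition $\lambda=(3(2^a),3(2^a),1)$ via \Cref{Kn1weird}, treat the $k=3,\lambda_1=1$ corner through the sharp first-difference inequality $\Delta_\ell\le\ell-1$, and absorb the sporadic small partitions into \Cref{KnSmallCases}. The one methodological difference lies in how the remaining polynomial inequality is verified. The paper derives a master inequality of the form
\[
2k^2\lambda_2^2+4k\lambda_1\lambda_2-10k\lambda_2^2-4\lambda_1^2-7k\lambda_2-4\lambda_1\lambda_2+12\lambda_2^2-12k-7\lambda_1+7\lambda_2+9\ge 0
\]
and then discharges it by a seven-way case split on $(k,\lambda_1,\lambda_2)$, each branch either reducing to a one-variable quadratic or landing in \Cref{KnSmallCases}. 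You instead observe that, after substituting the crude bounds of \Cref{cor:recurrence_bounds}, the target is quadratic in $\lambda_1$ with negative leading coefficient, so concavity reduces the check to the two endpoints $\lambda_1\in\{3,\ell\}$. This is a tidy streamlining that replaces most of the paper's sub-cases with a single convexity observation; the trade-off is that you must track a little more carefully which small partitions (for $k=3$, the cases $\ell\in\{3,4\}$; for $\lambda_1=1$, $k\ge 4$, only $(3,3,3,1)$) fall below threshold and are owed to the computer verification.
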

\begin{proof}
    \par By assumption, the partition is of the form $(\lambda_2,\dots, \lambda_2, \lambda_1)$. 
    First, the two cases we handle differently.
    \begin{enumerate}
        \item  $k=3, \lambda_1=1, \lambda_3\neq 3(2^a)$, so that $\lambda=(\lambda_3,\lambda_3,1)$. We tally the final score of $H'$. The initial cost is $\lambda_3^2+2\lambda_3-2$. The cat will use the strategy where they go to the middle component, gaining one point before voluntarily restricting their play to $K_{\lambda_3}$, and then will recursively score $\cat(K_{\lambda_3})$. Collectively, the final score will be $\lambda_3^2+2\lambda_3-1+\cat(K_{\lambda_3})$. We note that the binary search strategy has an initial cost of $\lambda_3(\lambda_3+1)$ before recursively scoring $\cat(K_{\lambda_3+1})$. Collectively, the final score will be $\lambda_3^2+\lambda_3+\cat(K_{\lambda_3+1})$. We now note that since $\lambda_3\neq 3(2^a)$, we know that $\Delta_{\lambda_3}<\lambda_3$. We compute
            \begin{align*}
                \lambda_3^2+2\lambda_3-1+\cat(K_{\lambda_3})
                & \geq \lambda_3^2 + 2\lambda_3 + \cat(K_{\lambda_3}) + \Delta_{\lambda_3} - \lambda_3 &(\lambda_3-1\geq \Delta_{\lambda_3})\\
                &= \lambda_3^2+\lambda_3+\cat(K_{\lambda_3})+\Delta_{\lambda_3}\\
                &= \lambda_3^2+\lambda_3+\cat(K_{\lambda_3+1})
            \end{align*}
            This shows that the strategy $H'$ does not improve the performance over $H$.
            
            \item $k=3, \lambda_1=1, \lambda_3=3(2^a),a\in\mathbb{N}$, so that $\lambda=(3(2^a),3(2^a),1)$. We tally the final score of $H'$. The initial cost is $9(2^{2a})+3(2^{a+1})-2$. The cat again moves to the middle component, thereby scoring $1$ more, and then playing in $K_{3(2^a)}$ with a pendant leaf. However, since $n=3(2^{a+1})+1\geq 3(2^a)+1$ is the minimal $n$ violating $\cat(K_n)=c_n$, this means that all $\cat(K_{n'})=c_{n'}$ for $n'\leq 3(2^a)+1$, and \cref{Kn1weird} is satisfied. Thus the cat will recursively score $\cat(K_{3(2^a)})+1$. Overall, the cat has scored $9(2^{2a})+3(2^{a+1})-2+1+\cat(K_{3(2^a)})+1$. The binary search strategy will score $3(2^a)(3(2^a)+1)+\cat(K_{3(2^a)+1})$. We may now check
            \begin{align*}
                9(2^{2a})+3(2^{a+1})-2+1+\cat(K_{3(2^a)})+1
                &=9(2^{2a})+3(2^{a+1})+\cat(K_{3(2^a)})\\
                &=9(2^{2a})+3(2^a)+\cat(K_{3(2^a)})+3(2^a)\\
                &=3(2^a)(3(2^a)+1)+\cat(K_{3(2^a)})+\Delta_{3(2^a)}\\
                &=3(2^a)(3(2^a)+1)+\cat(K_{3(2^a)+1})
            \end{align*}
            Thus in this particular case, the alternate strategy $H'$ scores \emph{exactly} as well as binary search. This is another optimal strategy for $n$ of the form $n=3(2^a)+1,a\geq 1$, but does not improve performance over binary search.
    \end{enumerate}
    
    In the remaining cases 3-7(b), we will aim to show the following inequality is true.
        $$2k^2\lambda_2^2+4k\lambda_1\lambda_2-10k\lambda_2^2-4\lambda_1^2-7k\lambda_2-4\lambda_1\lambda_2+12\lambda_2^2-12k-7\lambda_1+7\lambda_2+9\geq 0$$
    \begin{enumerate}
        \setcounter{enumi}{2}

        \item  $k=3, \lambda_1>1$. For $3\leq \lambda_1\leq 7$, it is easy to substitute both $k=3,\lambda_1$ in each possible case and get a linear equation in terms of $\lambda_2$. As such, the inequality is true when $\lambda_1\in (3,4,5,6,7), \lambda_2\geq (9,7,7,7,7)$, respectively. The remaining small cases are checked in \Cref{KnSmallCases}, or in the case of $\lambda_1=7$, we get for free that $\lambda_2\geq \lambda_1=7$. Henceforth assume that $\lambda_1\geq 8$. With the assumption that $\lambda_2\geq \lambda_1\geq 8,k=3$, it is not hard to verify the desired inequality in this case.
        \item $\lambda_2=\lambda_k, k=4$. In the case that $\lambda_2\leq 4$, we defer this to \Cref{KnSmallCases}. We assume that $\lambda_2\geq 5$. In the cases that $\lambda_1\in \{1,3\}$, we may substitute that into the desired inequality to get a quadratic in $\lambda_2$ which we may note is positive for $\lambda_2\geq 5$. We further assume without loss of generality that $\lambda_1\geq 4$. Then we may use $\lambda_2\geq 5,\lambda_2\geq \lambda_1\geq 3,k=4$ to obtain the desired inequality.
        \item $\lambda_2=\lambda_k, k=5$. First, if we substitute $\lambda_1=1$, then we find that we get a quadratic in $\lambda_2$ which is positive on $\lambda_2\geq 4$. We have $\lambda_2\geq 3$, but the remaining case ($\lambda_1=1,\lambda_2=3,k=5$) we leave for the small cases lemma. Assume $\lambda_1\geq 3$ then. It is not hard to verify that $\lambda_2\geq \lambda_1\geq 3,k=5$ implies the desired inequality.
    
        \item $\lambda_2=\lambda_k, k=6$. Knowing $\lambda_2\geq \lambda_1\geq 1,\lambda_2\geq 3,k=6$ is enough to derive the inequality again.

        \item
        \begin{enumerate}
            \item $\lambda_2=\lambda_k, k>=7, \lambda_2\geq 12$. This is the most general case, and we verify this by computing the following.
            \begin{align*}
                0&\leq (\lambda_2-12)k+4\lambda_2^2+9&(\lambda_2\geq 12)\\
                &\leq (8\lambda_1\lambda_2-7\lambda_2-12)k+4\lambda_2^2+9&(1\leq \lambda_1)\\
                &\leq (4\lambda_1\lambda_2+4\lambda_2^2-7\lambda_2-12)k+4\lambda_2^2+9 & (\lambda_1\leq \lambda_2)\\
                &\leq (2k\lambda_2^2+4\lambda_1\lambda_2-10\lambda_2^2-7\lambda_2-12)k+4\lambda_2^2+9&(7\leq k)\\
                &=2k^2\lambda_2^2+4k\lambda_1\lambda_2-10k\lambda_2^2-7k\lambda_2+4\lambda_2^2-12k+9&\\
                &\leq 2k^2\lambda_2^2+4k\lambda_1\lambda_2-10k\lambda_2^2-7k\lambda_2-4\lambda_1\lambda_2+8\lambda_2^2-12k+9 &(\lambda_1\leq \lambda_2)\\
                &\leq 2k^2\lambda_2^2+4k\lambda_1\lambda_2-10k\lambda_2^2-7k\lambda_2-4\lambda_1\lambda_2+8\lambda_2^2-12k-7\lambda_1+7\lambda_2+9 &(\lambda_1\leq \lambda_2)
            \end{align*}
            We have shown the inequality holds yet again.
    
            \item $\lambda_2=\lambda_k, k\geq 7, 3\leq \lambda_2\leq 11$. For each such $\lambda_2$, it is easy to verify that since $1\leq \lambda_1\leq \lambda_2$, there are only finitely many pairs $(\lambda_1,\lambda_2)$. For each of these, it is easy to substitute in the values to the desired inequality to get a quadratic in $k$, all of which are positive for $k\geq 5$, and we have $k\geq 7$, so again the desired inequality holds.
        \end{enumerate}
    \end{enumerate}
    Suffice to say, in cases 3-7(b), the claimed inequality holds. What does this inequality gain us? As usual, dividing by 12 and rearranging yields (*) in the following inequality chain.
    \begin{align*}
        1+\frac{(k-1)(k-2)}{2}\lambda_2^2&+(k-1)\lambda_1\lambda_2-(k-1)+\cat(K_{\lambda_2})\\
        &\geq 1+\frac{(k-1)(k-2)}{2}\lambda_2^2+(k-1)\lambda_1\lambda_2-k+\frac{\lambda_2}{3}\\
        &\geq \frac{1}{3}k^2\lambda_2^2+\frac{2}{3}k\lambda_1\lambda_2-\frac{2}{3}k\lambda_2^2+\frac{1}{3}\lambda_1^2+\frac{7}{12}k\lambda_2-\frac{2}{3}\lambda_1\lambda_2+\frac{1}{3}\lambda_2^2+\frac{7}{12}\lambda_1-\frac{7}{12}\lambda_2+\frac{1}{4}&(*)\\
        &\geq \frac{1}{12}(n+1)^2+\frac{1}{4}(n+1)n+\frac{1}{6}(n+1)\\
        &\geq \frac{1}{3}\left(\left\lceil\frac{n}{2}\right\rceil\right)\left(\left\lceil\frac{n}{2}\right\rceil+1\right)+\left\lceil\frac{n}{2}\right\rceil\left\lfloor\frac{n}{2}\right\rfloor\\
        &\geq \cat\left(\left\lceil\frac{n}{2}\right\rceil\right)+\left\lceil\frac{n}{2}\right\rceil\left\lfloor\frac{n}{2}\right\rfloor
    \end{align*}
    Noting that the left hand side corresponds to the cat playing against $H'$, counting an extra point for the cat employing step 4 of their strategy for an extra point (since $\lambda_2=\lambda_k$), while the right hand side corresponds to the binary search strategy. Thus the proposed strategy $H'$ fails to perform better than binary search in a direct comparison, which was what we had to show.
\end{proof}
Continuing with minimal counterexample possibilities, we now consider when we have multiple large and small components. 
\begin{lemma}\label{KnkBiglk1Bigl21}
    Let $H'$ be an optimal herder strategy on $K_n$, where $n$ is a minimal counterexample to \Cref{thm:mainComplete}. Let $(\lambda_i)$ be the partition corresponding to the critical graph in a game of $C$ as the cat, and $H'$ as the herder. If $k>2, \lambda_{k-1}\geq 3$, and $1=\lambda_2$, then $H'$ captures the cat playing stategy $C$ on $K_n$ in no fewer than $c_n$ moves.
\end{lemma}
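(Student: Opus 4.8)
The plan is to follow the pattern of \Cref{Knl2,KnkBiglk1Bigl2Islk}. Since $\lambda_2=1$ forces $\lambda_1=1$, and $\lambda_{k-1}\ge 3$ then forces $k\ge 4$, the critical graph of $H'$ has partition $\lambda=(\lambda_k,\lambda_{k-1},\dots,1,1)$ with at least two parts equal to $1$ and $\lambda_{k-1},\lambda_k\ge 3$; by \Cref{lem:assumeCompComps} we may take every $2$-edge-connected component to be a clique. First I would determine what $C$ does once this critical graph is reached: \Cref{alg:general} sends the cat into its final \textbf{else} branch when $\lambda_k-\lambda_{k-1}\ge 2$ and into \Cref{alg:nearlyBin} when $\lambda_k-\lambda_{k-1}\le 1$, and in both cases an argument like the one in \Cref{Knl2} shows that $C$ eventually voluntarily restricts to a clique on $\lambda_k$ vertices without ever scoring worse than restricting immediately, banking moreover one cut in the subcase $\lambda_k=\lambda_{k-1}$ (the cat passes through an untouched copy of $K_{\lambda_k}$). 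Replacing $\cat(K_{\lambda_k})$ by $c_{\lambda_k}$ using minimality of $n$, this gives that $H'$ scores at least $B:=\sum_{1\le i<j\le k}\lambda_i\lambda_j-(k-1)+c_{\lambda_k}$, and at least $B+1$ when $\lambda_k=\lambda_{k-1}$, so it remains to prove $B\ge c_n$, resp.\ $B+1\ge c_n$.

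I would first settle the partitions with $\lambda_k-\lambda_{k-1}\le 1$. For $\lambda_k=\lambda_{k-1}$ the banked cut plus the bound $\Delta_{\lambda_k}\le\lambda_k$ from \Cref{lem:delNBounds} give $B+1\ge c_n$ after a short expansion of $c_n$ (the tightest instance $(\lambda_k,\lambda_k,1,1)$ reducing, via $c_n=(\lambda_k+1)^2+c_{\lambda_k+1}$, to $2\lambda_k-2\ge\Delta_{\lambda_k}$); for $\lambda_k=\lambda_{k-1}+1$ the continuation analysis runs as in case~2 of \Cref{Knl2}. For the remaining partitions ($\lambda_k-\lambda_{k-1}\ge 2$) I would invoke \Cref{lem:standardCompleteAnalysis}: writing $\sum_{1\le i<j\le k}\lambda_i\lambda_j=\tfrac12\bigl(n^2-\sum_i\lambda_i^2\bigr)$ and bounding $\sum_{i<k}\lambda_i^2\le\lambda_{k-1}(n-\lambda_k-2)+2$ (two parts are $1$ and every other non-$\lambda_k$ part is $\le\lambda_{k-1}$), its hypothesis becomes a polynomial inequality in $n,k,\lambda_k,\lambda_{k-1}$; together with $3\le\lambda_{k-1}\le\lambda_k-2$, $\lambda_{k-1}\le n-\lambda_k-2$ and $k\le(n-\lambda_k+7)/3$ this confines the possible counterexamples to an explicit bounded region. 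A merging/rebalancing argument --- merging two non-$1$ parts, or nudging the top two parts closer, never raises the left side of the hypothesis while keeping $n$ fixed, because the gain $2ab$ in $\sum_i\lambda_i^2$ dominates the loss of $1$ in $k$ --- locates the worst such partition at $(\lambda_k,\lambda_k-2,1,1)$, where the hypothesis collapses to a single linear inequality in $n$ that holds once $n$ is past a small explicit bound. The finitely many partitions of small $n$ that remain are cleared by \texttt{testPartition} and \Cref{KnSmallCases}.

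The step I expect to be hardest is the ``balanced'' range of the last case, where $\lambda_{k-1}$ is close to $\lambda_k$: there any crude bound on $\sum_i\lambda_i^2$ is too lossy, and one must genuinely use that the cross terms $\lambda_k\lambda_{k-1}$ --- and, when several medium components are present, the further cross terms --- are already contained in $\sum_{1\le i<j\le k}\lambda_i\lambda_j$, which is exactly what the sum-of-squares identity together with the merging argument is meant to exploit; but verifying that the claimed partition is extremal, and that the resulting one-variable bound is not undercut by the $\lambda_k-\lambda_{k-1}\le 1$ partitions that rely on the banked cut from \Cref{alg:nearlyBin}, is where the care lies. A secondary nuisance, as in the preceding lemmas, is making sure the finite list of small-$n$ partitions with $\lambda_2=1$ handled by the code is genuinely complete.
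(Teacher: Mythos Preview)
Your route is quite different from the paper's, and the extremal-partition step has a real gap.

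The paper does not split on $\lambda_k-\lambda_{k-1}$. It fixes $H'$ minimal in $n$ and then in $k$, and cases on $\lambda_3$ instead. When $\lambda_3=\lambda_k$ the partition is forced to be $(\lambda_3,\dots,\lambda_3,1,1)$, a two-parameter family in $(k,\lambda_3)$, and the hypothesis of \Cref{lem:standardCompleteAnalysis} is verified directly as a polynomial inequality in those two variables (residual small values going to \Cref{KnSmallCases}). When $1<\lambda_3<\lambda_k$ or $\lambda_3=1$, no inequality is checked at all: the paper exhibits an alternate partition $\lambda'$ with strictly fewer parts and strictly smaller initial cost --- absorb one of the two $1$'s into $\lambda_3$, respectively replace three $1$'s by a single $3$ --- checks that the cat's continuation score from $\lambda'$ is no larger, and concludes that the herder strategy realizing $\lambda'$ is at least as good as $H'$, contradicting minimality of $k$. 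So all the analytic work lives on a single two-parameter slice, never on the full partition space.

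Your plan instead tries to minimize the left side of \Cref{lem:standardCompleteAnalysis} over all admissible $\lambda$, and the monotonicity claims you rely on fail. Merging two non-top parts $a,b$ changes $\tfrac12(n^2-\sum_i\lambda_i^2)-k$ by $-(ab-1)$, but if $a+b$ overtakes $\lambda_k$ the term $\tfrac13\lambda_k^2$ jumps to $\tfrac13(a+b)^2$, and the net change can be positive. Moving one unit from $\lambda_k$ to $\lambda_{k-1}$ changes the left side by $(\lambda_k-\lambda_{k-1}-1)+\tfrac{1-2\lambda_k}{3}$, which is positive whenever $\lambda_k>3\lambda_{k-1}+2$; so ``nudging the top two closer'' is not monotone either. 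Hence neither $(\lambda_k,\lambda_k,1,1)$ nor $(\lambda_k,\lambda_k-2,1,1)$ is established as extremal, and the promised collapse to a single linear inequality in $n$ does not follow. The paper's device --- change the herder's strategy to shrink $k$, rather than optimize over all partitions at once --- sidesteps this difficulty entirely and is what makes the argument short.
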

\begin{proof}
    We have three cases, $\lambda_3=\lambda_k, 1<\lambda_3<\lambda_k$, and $\lambda_3=1$. The first case requires special analysis, and in the second two cases, we select $H'$ to be minimal with respect to $n$, then $k$. 

    First, the case with special considerations.
    \begin{enumerate}
        \item If $\lambda_3=\lambda_k$, then our partition is of the form $(\lambda_3,\dots, \lambda_3,1,1)$. We aim to do a direct comparison between this $\lambda$ and the alleged optimal strategy. We want to show the following inequality.

        \[2k^2\lambda_3^2-14k\lambda_3^2+k\lambda_3+24\lambda_3^2-12k-2\lambda_3-21\geq 0\]
    
        This inequality can be rearranged as $2(k-4)(k-3)\lambda_3^2+(k-2)\lambda_3-12k-21\geq 0$. Noting that for $k>4$, $2(k-4)(k-3)\geq 2$, so it suffices to show that $0\leq 2\lambda_3^2+(k-2)\lambda_3-12k-21$. However, this can be rearranged as $k(\lambda_3-12)+2\lambda_3^2-2\lambda_3-21$. Solving the linear $\lambda_3-12\geq 0$ and quadratic $2\lambda_3^2-2\lambda_3-21$ in $\lambda_3$ we find that we need $\lambda_3\geq 12$ and $\lambda_3\geq 4$. The cases where $\lambda_3\in \{1,3,4,5,6,7,8,9,10,11\}$ result in an inequality of quadratics in $k$ which all are positive for $k\geq 3$. In the cases when $k=3,4$, our desired equations become $\lambda_3-57\geq 0, 2\lambda_3-69\geq 0$, and we can note that the cases of $(\lambda_3,1,1)$ with $\lambda_3<57$ is handled in \Cref{KnSmallCases}, and $(\lambda_3,\lambda_3,1,1)$ with $\lambda_3<35$ is also handled in \Cref{KnSmallCases}. In short, we will now take this inequality as given. This inequality is sufficient to verify the condition in \cref{lem:standardCompleteAnalysis}, and thus shows that this strategy does not improve the performance.
    \end{enumerate}
    In the remaining cases, we will alter the strategy to a new $\lambda'$ which saves on the initial cost, but recursively will score at most one more, thus net not losing any performance from the herder's perspective. However, the altered strategy will violate $H'$s minimality with respect to $k$.
    \begin{enumerate}
        \setcounter{enumi}{1}
        \item If $1<\lambda_3<\lambda_k$, we have a current partition of the form $\lambda=(\lambda_k,\dots, \lambda_3,1,1)$. We define our alternate $\lambda'=(\lambda_k,\dots, \lambda_3+1,1)$ with $k-1$ components, rather than $k$. This saves the $\lambda_3-1$ cuts that would be spent to separate $\lambda_3,\lambda_1$ normally, which means again our initial cost is strictly less. Since $k\geq 4$ and $\lambda_3\neq 2$, there is no cat strategy in $C$ that may increase the recursive score, so it suffices to only check initial cost.

        \item If $\lambda_3=1$, then either $k=3$ and $\lambda=(1,1,1)$, which is \Cref{KnSmallCases}, or we have a partition of the form $(\lambda_k,\dots, \lambda_4,1,1,1)$. We alter this to the partition $\lambda'=(\lambda_k,\dots, \lambda_4,3)$. This clearly costs one cut less than $\lambda$ as the final triangle does not have an edge cut, and again the initial cost is strictly less. Here, the modified partition has $k'=2$ if and only if $\lambda_k=\lambda_4$, which is handled by \Cref{KnSmallCases} when $\lambda_4\in \{3,4\}$, and otherwise the cat will play in the $\lambda_4$ component without gaining any extra score. For $k'\geq 3$, though, the only special case handling is when $\lambda=(3(2^m),3(2^m),1)$, which only happens if the previous $\lambda=(3,1,1,1,1)$. In this case, we know that the cat can manage to score at most one more point from \Cref{Kn1weird} and the cat's strategy of playing on $(3(2^m),1)$ partitions. Thus this strategy scores no worse as a net effect, but it has $k-2$ components, and thus violates minimality with respect to cat number, then $k$.
    \end{enumerate}
    Thus we see that there can be no $H'$ of this type that captures cat $C$ faster than binary search.
\end{proof}
Next, we consider when the critical graph of a minimal counterexample has a partition that allows donating a vertex from the smalllest component to the next-smallest component.
\begin{lemma}\label{KnkBiglk1Bigl2Mid}
    Let $H'$ be an optimal herder strategy on $K_n$, where $n$ is a minimal counterexample to \Cref{thm:mainComplete}. Let $(\lambda_i)$ be the partition corresponding to the critical graph in a game of $C$ as the cat, and $H'$ as the herder. If $k>2, \lambda_{k-1}\geq 3$, and $1<\lambda_2<\lambda_k$, then $H'$ captures the cat playing strategy $C$ on $K_n$ in no fewer than $c_n$ moves.
\end{lemma}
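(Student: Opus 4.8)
\textit{Proof plan.} The plan is to follow \Cref{KnkBiglk1Bigl21} and reduce this configuration to one with strictly fewer $2$-edge-connected components. Take $H'$ to be a counterexample minimal first in $n$ and then in $k$, and invoke \Cref{lem:assumeCompComps} so that the critical graph is a bridge-chain of complete graphs $K_{\lambda_1},\dots,K_{\lambda_k}$ with $k\ge 3$, $\lambda_{k-1}\ge 3$ and $1<\lambda_2<\lambda_k$ (in particular $\lambda_2\ge 3$ and every part is at least $1$ or at least $3$). I would introduce the strategy $H''$ that replays every cut of $H'$ except the $\lambda_1\lambda_2-1$ edges joining $K_{\lambda_1}$ to $K_{\lambda_2}$ that lie above the bridge, so that against $C$ it reaches a critical graph of shape $\lambda'=(\lambda_k,\dots,\lambda_3,\lambda_1+\lambda_2)$ with $k-1$ components; checking by a trace through \Cref{alg:general,alg:nearlyBin,alg:nSmall} that $C$ really does stay inside the components of $\lambda'$ is part of the work. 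Writing $\mathrm{cost}(\cdot)$ for the number of cuts needed to reach a critical graph of a given shape, namely $\sum_{i<j}\lambda_i\lambda_j$ minus the number of bridges, the identity $(\lambda_1+\lambda_2)^2-\lambda_1^2-\lambda_2^2=2\lambda_1\lambda_2$ gives $\mathrm{cost}(\lambda)-\mathrm{cost}(\lambda')=\lambda_1\lambda_2-1$, so $H''$ saves exactly $\lambda_1\lambda_2-1\ge 2$ cuts in the first phase.

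The crux is the inequality $\cat(K_n,H'',C)\le\cat(K_n,H',C)$, which I would prove by running the two games in parallel as in the proof of \Cref{lem:assumeCompComps}. Once $C$ is confined to a complete component the only differences between the games are: the first-phase saving of $\lambda_1\lambda_2-1$ cuts; the change in the final component $C$ restricts to; and at most one further point $C$ may harvest in $\lambda'$ (if $\lambda_1+\lambda_2=\lambda_k$ so $C$'s balanced-split step applies, or via \Cref{alg:nearlyBin}), together with the pendant-bridge $\pm1$ of \Cref{Kn1weird}. If $\lambda_1+\lambda_2\le\lambda_k$ the largest component is unchanged, so only the saving matters and the inequality is immediate. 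If $\lambda_1+\lambda_2>\lambda_k$ the largest part grows by only $\lambda_1+\lambda_2-\lambda_k\le\lambda_1-1$ steps, so by \Cref{lem:delNBounds} the recursion grows by at most $\sum_{j=\lambda_k}^{\lambda_1+\lambda_2-1}\Delta_j\le\binom{\lambda_1+\lambda_2}{2}-\binom{\lambda_k}{2}$, and since $\lambda_k\ge\lambda_2+1$ forces $\binom{\lambda_k}{2}\ge\binom{\lambda_2}{2}+\lambda_2$, the net change in the cat's score is at most $\binom{\lambda_1}{2}-\lambda_2+2$, which is non-positive once $\lambda_2\ge\binom{\lambda_1}{2}+2$. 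The exceptional pairs ($\lambda_1\ge 3$ with $\lambda_2$ below this bound) I would dispose of by comparing $\mathrm{cost}(\lambda)+\cat(K_{\lambda_k})$ directly with the exact $c_n$ of \Cref{thm:recurrence_closed}: once $k$ is large enough this already follows from \Cref{lem:standardCompleteAnalysis}, and when $k$ (hence $n$) is bounded it is a finite check, part of which is \Cref{KnSmallCases}.

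With $\cat(K_n,H'',C)\le\cat(K_n,H',C)<c_n$ and $\lambda'$ having $k-1$ components all of size at least $3$: if $k-1=2$ this contradicts \Cref{Knl2}, while if $k-1\ge 3$ then $\lambda'$ falls under \Cref{KnkBiglk1Bigl2Islk} (when $\lambda'_2=\lambda'_k$) or under the present lemma with fewer components, each of which forces $\cat(K_n,H'',C)\ge c_n$ — a contradiction in every case. Hence no such $H'$ exists, which is the claim.

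I expect the real obstacle to be the parallel-game bookkeeping behind $\cat(K_n,H'',C)\le\cat(K_n,H',C)$: one must verify that omitting the $[K_{\lambda_1},K_{\lambda_2}]$ cuts never lets $C$ escape to a component larger than the one it would occupy in the $H'$-game (a trace through the cat's case analysis in \Cref{alg:general}), and then account \emph{exactly}, not merely up to $\pm1$, for the single harvestable extra point and for the pendant-bridge bonus of \Cref{Kn1weird}, since the entire comparison turns on a one-point margin. Pinning down the finite list of small $(\lambda_1,\lambda_2)$ pairs in the $\lambda_1+\lambda_2>\lambda_k$ branch, and confirming each is covered by \Cref{lem:standardCompleteAnalysis} or \Cref{KnSmallCases}, is the remaining chore.
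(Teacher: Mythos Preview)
Your reduction is different from the paper's, and the difference matters. The paper does \emph{not} merge $\lambda_1$ and $\lambda_2$; instead it donates a single vertex, setting $\lambda_2'=\lambda_2+1$, $\lambda_1'=\lambda_1-1$ (with reindexing when $\lambda_1\in\{1,3\}$), and takes the minimal counterexample with respect to $n$, then $k$, then $\lambda_1$. Because the hypothesis gives $\lambda_2<\lambda_k$, the donation satisfies $\lambda_2'\le\lambda_k$, so the largest block is \emph{never} altered and the recursive cost $c_{\lambda_k}$ is identical in $\lambda$ and $\lambda'$. All that remains is to check that the initial cost strictly drops (the inequality $\lambda_1\lambda_2>(\lambda_1-1)(\lambda_2+1)$ plus two edge computations for $\lambda_1\in\{1,3\}$) and that the cat's bonus cases of \Cref{alg:general} do not flip on; the latter is a short trace, since $\lambda_k'=\lambda_k$ and $\lambda_{k-1}'\ge\lambda_{k-1}$ force the only new triggers to be partitions already covered by \Cref{Knl2}, \Cref{KnkBiglk1Bigl21}, or \Cref{Kn1weird}.

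Your merge, by contrast, creates a block of size $\lambda_1+\lambda_2$ which can exceed $\lambda_k$, and this is exactly where your argument develops a gap. In your Case~2 you bound the recursion growth by $\binom{\lambda_1+\lambda_2}{2}-\binom{\lambda_k}{2}$ and arrive at the sufficient condition $\lambda_2\ge\binom{\lambda_1}{2}+2$; the residual ``exceptional pairs'' you propose to discharge by \Cref{lem:standardCompleteAnalysis} for large $k$ and by a finite check otherwise. But the exceptional set is \emph{not} finite: for every $\lambda_1\ge3$ there are admissible $\lambda_2$ with $\lambda_1\le\lambda_2<\binom{\lambda_1}{2}+2$, and for each such pair the remaining parts $\lambda_3,\dots,\lambda_{k-1}\in[\lambda_2,\lambda_k]$ and $k$ itself are unbounded. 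The threshold on $k$ at which \Cref{lem:standardCompleteAnalysis} kicks in depends on $\lambda_1+\lambda_2$, hence on $\lambda_1$, so you do not get a uniform cutoff; what survives is an infinite two-parameter family, not something \Cref{KnSmallCases} absorbs. The paper's one-vertex donation sidesteps this entirely because $\lambda_k$ never moves, so there is no ``Case~2'' to analyse at all. If you want to rescue your route, you would need either a sharper bound than $\Delta_j\le j$ (e.g.\ feeding \Cref{lem:firstDiffClosed} into the telescoping sum) to close the gap uniformly in $\lambda_1$, or to abandon the merge in Case~2 and argue \Cref{lem:standardCompleteAnalysis} directly for all such $\lambda$---but either of those is substantially more work than the paper's donation trick.
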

\begin{proof}
    \par If such a strategy that scores better than $H$ meets this criteria, we select it such that the critical graph $G$ has a minimal $n$, and then minimal $k$, and then minimal $\lambda_1$.
    \par In this case, create a new $\lambda'$ by copying $\lambda$, but instead change it so that $\lambda_2'=\lambda_2+1$ and $\lambda_1'=\lambda_1-1$, then reindex if necessary (if $\lambda_1=1$ we removed a component or $\lambda_1=3$ results in breaking a triangle into $3$ isolated vertices, requiring re-indexing).
    \par The alternate herder's strategy is to cut the cat to a path of components with sizes $\lambda_k,\lambda_{k-1},\dots, \lambda_3,\lambda_1,\lambda_2$, where all bridge edges meet at the same vertex of each component. Note that $\lambda,\lambda'$ have initial costs of, respectively,
    $$\sum_{1\leq i<j\leq k}\lambda_i\lambda_j-(k-1), \sum_{1\leq i<j\leq |\lambda'|}\lambda_i'\lambda_j'-(|\lambda'|-1)$$
    If $\lambda_1\geq 4$, then $k=|\lambda'|$, and so we need only compare the $\lambda_i\lambda_j$ terms. We note that $\lambda_2-\lambda_1\geq 0$, and so $0>\lambda_1-\lambda_2-1$. Adding $\lambda_1\lambda_2$ to both sides we get that $\lambda_1\lambda_2>(\lambda_1-1)(\lambda_2+1)=\lambda_1'\lambda_2'$. Also noting that $\lambda_1+\lambda_2=\lambda_1'+\lambda_2'$, we can verify that 
    \begin{align*}
        \sum_{1\leq i<j\leq k}\lambda_i\lambda_j
        &=\sum_{3\leq i<j\leq k}\lambda_i\lambda_j+(\lambda_1+\lambda_2)\sum_{i=3}^{k}\lambda_i+\lambda_1\lambda_2\\
        &<\sum_{3\leq i<j\leq k}\lambda_i'\lambda_j'+(\lambda_1'+\lambda_2')\sum_{i=3}^{|\lambda'|}\lambda_i'+\lambda_1'\lambda_2'\\
        &=\sum_{1\leq i<j\leq |\lambda'|}\lambda_i'\lambda_j'
    \end{align*}
    Everything else being equal, this means that the initial cost of $\lambda'$ is strictly less than that of $\lambda$.
    \par We must still verify cases $\lambda_1=1$ and $\lambda_1=3$. Suppose $\lambda_1=3$, so that $\lambda'=(\lambda_k,\dots, \lambda_2+1,1,1)$ is now a partition of $k+1$ parts, since $K_2$ is not $2$-edge-connected. For simplicity, we will say that $\lambda_1'=1,\lambda_0'=1$ as our reindexing. Now we note that since $\lambda_2\geq \lambda_1=3$, we know that $3\lambda_2\geq 2(\lambda_2+1)+1=2\lambda_2'+1$, and thus $\lambda_1\lambda_2=3\lambda_2\geq 2\lambda_2'+1=(\lambda_0'+\lambda_1')\lambda_2'+\lambda_0'\lambda_1'$. We can now derive that 
    \begin{align*}
        \sum_{1\leq i<j\leq k}\lambda_i\lambda_j-(k-1)
        &=\sum_{3\leq i<j\leq k}\lambda_i\lambda_j+(\lambda_1+\lambda_2)\sum_{i=3}^{k}\lambda_i+\left(\lambda_1\lambda_2-(k-1)\right)\\
        &\geq \sum_{3\leq i<j\leq k}\lambda_i'\lambda_j'+(\lambda_0'+\lambda_1'+\lambda_2')\sum_{i=3}^{k}\lambda_i'+\left(\lambda_0'\lambda_1'+\lambda_0'\lambda_2'+\lambda_1'\lambda_2'-(k-1)\right)\\
        &=\sum_{0\leq i<j\leq k}\lambda_i'\lambda_j'-(k-1)\\
        &>\sum_{0\leq i<j\leq k}\lambda_i'\lambda_j'-(|\lambda'|-1)
    \end{align*}
    Again, this means that the initial cost of $\lambda'$ is strictly less than that of $\lambda$.
    Finally, we verify $\lambda_1=1$.
    
    Since $\lambda_2>1$ and corresponds to a $2$-edge-connected component, we have $\lambda_2\geq 3,\lambda_1=1$. In this case, we consider that $\lambda'=(\lambda_k,\dots, \lambda_2+1)$. Noting that $|\lambda'|=k-1$ and $\lambda_2\geq 3$, we know that $\lambda_1\lambda_2-(k-1)= \lambda_2-|\lambda'|> -(|\lambda'|-1)$. Using this we may derive
    \begin{align*}
        \sum_{1\leq i<j\leq k}\lambda_i\lambda_j-(k-1)
        &=\sum_{3\leq i<j\leq k}\lambda_i\lambda_j+(\lambda_1+\lambda_2)\sum_{i=3}^{k}\lambda_i+\left(\lambda_1\lambda_2-(k-1)\right)\\
        &> \sum_{3\leq i<j\leq k}\lambda_i'\lambda_j'+(\lambda_2')\sum_{i=3}^{k}\lambda_i'-(|\lambda'|-1)\\
        &=\sum_{2\leq i<j\leq k}\lambda_i'\lambda_j'-(|\lambda'|-1)
    \end{align*}

    Thus, yet again, this alternate $\lambda$ has a strictly lower setup score.

    All that remains to be shown is that in any of these cases, the recursive score remains the same. Note that since $\lambda_k>\lambda_2$, and in all cases we have increased $\lambda_2$ by $1$, we have that $\lambda_k'\geq \lambda_2'$. Note that the only way that the cat does not voluntarily restrict themselves to the largest component in the resulting game is when $\lambda'=(3(2^m),\ 3(2^m),1),\ \lambda'=(3(2^m),1)$, or $\lambda_k'-\lambda_{k-1}'\leq 1$. If either of the first two, then $\lambda=(3(2^m)$ or $3(2^m)-1,1,1),\ \lambda=(3(2^m)-1,1,1)$, respectively. These cases from \Cref{Kn1weird} score at most one more, thus giving at worst exactly the same score as a binary search herder. However, these $\lambda'$ have fewer components than $\lambda$, violating the minimality of $k$. Finally, if $\lambda_k'-\lambda_{k-1}'\leq 1$, then the cat plays according to \Cref{alg:nearlyBin}. This may happen because $k=2$ and the original partition can be handled by \Cref{Knl2}, because $k=3$ and original partition is $(\lambda_2+c,\lambda_2,\lambda_1)$ for $1\leq c\leq 2$ (using $\lambda_2<\lambda_k$), or because $k\geq 4$ (in which case the recursive game plays on $\lambda_k,\lambda_{k-1}$ while ignoring the rest of the graph). Without loss of generality, then, $k=3,\lambda=(\lambda_2+c,\lambda_2,\lambda_1)$ for $1\leq c\leq 2$. Suppose $c=1$. Then $\lambda_3'=\lambda_2'=\lambda_3$. The cat will play as if the herder cuts the bridge between the two components, and will score exactly as well in $\lambda'$ as in $\lambda$. Alternately, $c=2$ and in $\lambda$ the cat did not use \Cref{alg:nearlyBin}, but they will in $\lambda'$. However, following \Cref{alg:nearlyBin} scores results in a score where the cat does not evade capture relative to the case where the cat voluntarily restricts themselves to $K_{\lambda_3}$ after a bridge cut. Thus in either case, so nothing is gained for the cat.
\end{proof}
We are finally ready to prove \Cref{thm:mainComplete}.

\comCatThm*

\begin{proof}
    The closed form of $c_n$ and asymptotic bounds on $c_n$ have already been shown in \Cref{thm:recurrence_closed} and \Cref{cor:recurrence_bounds}. All that is left is the full proof that $\cat(K_n)=c_n$.
    
    It is clear from the herder strategy of binary search that $\cat(K_n)\leq c_n$. For sake of contradiction, suppose for some $n$, $\cat(K_n)<c_n$. Then if $\cat(K_n,C,H)=\cat(K_n,C)$, we would have a contradiction as $\cat(K_n)=c_n$ in that case. Thus $\cat(K_n,C,H)>\cat(K_n,C)$ so that there is a herder strategy $H'$ that can isolate the cat playing $C$ faster than binary search. For any $H'$, let $G$ be the first graph that is reached that is not $2$-edge-connected, denote this graph the critical graph. Let $\{C_i\}$ be the set of all maximal $2$-edge-connected subgraphs of $G$. Let $|V(C_i)|=\lambda_i$, and re-index as necessary to make $\lambda_k\geq \lambda_{k-1}\geq \dots\geq \lambda_1$.

    \par If $n<9$, then \cref{KnSmallCases} tells us that $H'$ does not improve the performance for the herder over binary search when playing against the cat with strategy $C$. Assume $n\geq 9$.

    \par If $k=2$, then \cref{Knl2} tells us that $H'$ does not improve the performance for the herder over binary search when playing against the cat with strategy $C$. Assume $k\geq 3$.

    \par If $\lambda_{k-1}=1$, then \cref{KnkBiglk11} tells us that $H'$ does not improve the performance for the herder over binary search when playing against the cat with strategy $C$. Assume $\lambda_{k-1}\geq 3$.

    \par If $\lambda_2=\lambda_k$, then \cref{KnkBiglk1Bigl2Islk} tells us that $H'$ does not improve the performance of the herder over binary search when playing against the cat with strategy $C$. Since $\lambda_2\leq \lambda_k$, we may conclude that $\lambda_2<\lambda_k$.

    \par If $\lambda_2=1$, then \cref{KnkBiglk1Bigl21} tells us that $H'$ does not improve the performance of the herder over binary search when playing against the cat with strategy $C$. Assume without loss of generality that $1<\lambda_2<\lambda_k$.

    \par Finally we have enough conditions to apply \cref{KnkBiglk1Bigl2Mid} to get that $H'$ does not improve the performance of the herder over binary search when playing against the cat with strategy $C$.

    \par As there is no possible $H'$ that can improve performance against $C$, we must in fact have made a faulty assumption that $c_n\neq \cat(K_n)$ for some $n\geq 2$.
\end{proof}

\begin{corollary}
    If $n\geq 2$, then strategies $C,H$ are optimal on $K_n$.
\end{corollary}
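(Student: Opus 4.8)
The plan is to deduce this from \Cref{thm:mainComplete} together with the structural lemmas already established in this section, using \Cref{lrScore} to upgrade ``optimal against the designated opponent'' into ``optimal against everything.'' First I would dispose of the herder. The binary search strategy $H$ of \Cref{alg:herderStrat} visibly guarantees capture in at most $c_n$ cuts: the recurrence $c_2=1$, $c_3=2$, $c_n=\lfloor n/2\rfloor\lceil n/2\rceil+c_{\lceil n/2\rceil}$ counts exactly the cuts it makes, always recursing on the part the cat ends up in, which in the worst case is the larger one. Since $\cat(K_n)$ is the value of the minimax game, no herder strategy can force capture in fewer than $\cat(K_n)$ cuts, so $\cat(K_n)\le\cat(K_n,H)\le c_n$; by \Cref{thm:mainComplete} the two ends coincide, giving $\cat(K_n,H)=c_n=\cat(K_n)$, i.e. $H$ is optimal.

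Next I would handle the cat. By \Cref{KnHCisC} we have $\cat(K_n,H,C)=\cat(K_n,H)=c_n$, so $C$ is optimal against $H$. Applying \Cref{lrScore} with $L=C$ (the maximizing cat) and $R=H$ (the minimizing herder), it remains only to check that $H$ is optimal against $C$, that is $\cat(K_n,C,H)=\cat(K_n,C)$. Here $\cat(K_n,C,H)=c_n$ (it equals $\cat(K_n,H)$ by \Cref{KnHCisC}, and against $H$ the strategy $C$ does move into the larger part each time, so the recurrence is attained with equality), while $\cat(K_n,C)\le\cat(K_n)=c_n$; so the only thing to verify is the inequality $\cat(K_n,C)\ge c_n$, i.e. that no herder strategy $H'$ isolates the cat running $C$ in fewer than $c_n$ moves. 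But that is precisely the content assembled in the proof of \Cref{thm:mainComplete} out of Lemmas~\ref{KnSmallCases}, \ref{Knl2}, \ref{KnkBiglk11}, \ref{KnkBiglk1Bigl2Islk}, \ref{KnkBiglk1Bigl21}, and \ref{KnkBiglk1Bigl2Mid}: one splits on the partition $\lambda$ of the first non-$2$-edge-connected (``critical'') graph that $H'$ produces against $C$, and in each case $H'$ is forced to use at least $c_n$ cuts. With $\cat(K_n,C)=c_n=\cat(K_n)$ in hand, \Cref{lrScore} concludes that $C$ and $H$ are both optimal against every strategy.

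The bulk of the work is already behind us; what is left is packaging, and the one genuine subtlety is a logical one. The lemmas invoked above are phrased in terms of ``a minimal counterexample to \Cref{thm:mainComplete}'', and once that theorem is proven there is no counterexample, so they cannot be cited verbatim. The hard part will be noting --- carefully --- that their proofs only ever use the equality $\cat(K_m)=c_m$ for $m$ strictly smaller than $n$ (together with a handful of explicitly bounded larger instances feeding \Cref{Kn1weird}), all of which \Cref{thm:mainComplete} now supplies unconditionally; hence those arguments are in fact a genuine strong induction on $n$ and their conclusions hold for every $n$. That observation, plus the small check that $C$ realizes the recurrence with equality against $H$ (so $\cat(K_n,C,H)$ is exactly $c_n$, not merely $\le c_n$), is all that is needed to complete the proof.
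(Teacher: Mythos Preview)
Your argument is correct and follows essentially the same route as the paper: invoke \Cref{KnHCisC} for ``$C$ is optimal against $H$'', invoke the content of the case lemmas for ``$H$ is optimal against $C$'', and then apply \Cref{lrScore}. Your additional first step, proving $H$ optimal directly from $\cat(K_n)\le\cat(K_n,H)\le c_n$ and \Cref{thm:mainComplete}, is valid but redundant, since \Cref{lrScore} already yields optimality of both strategies once the two pairwise conditions are in hand.

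The one place where you add something the paper glosses over is the logical point about the minimal-counterexample framing. The paper's one-line proof simply asserts ``we have now shown that no $H'$ may perform better against $C$ than $H$ does'', treating the case lemmas as if they established this for every $n$; you correctly observe that, as stated, those lemmas are conditioned on $n$ being a minimal counterexample to \Cref{thm:mainComplete}, a hypothesis that is vacuous once the theorem is proved. Your fix --- noting that the proofs only use $\cat(K_m)=c_m$ for $m<n$ (plus the bounded instances feeding \Cref{Kn1weird}), all now supplied unconditionally by \Cref{thm:mainComplete} --- is exactly right and turns the argument into an honest strong induction. This is a packaging issue rather than a mathematical one, but it is a real subtlety that the paper's proof of the corollary elides.
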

\begin{proof}
    We have now shown that no $H'$ may perform better against $C$ than $H$ does, and no $C'$ may perform better against $H$ than $C$ does,  so we have that $\cat(K_n,C,H)=\cat(K_n,C)=\cat(K_n,H)$. From \cref{lrScore}, this tells us that both strategies $C,H$ are optimal on $K_n$.
\end{proof}
Note that this means that cat herding on complete graphs is now completely solved --- given any complete graph, both players can proceed to play optimally against any adversary. In particular, each herder moves may be computed in sets, providing linear time computations which is amortized to constant time as subsequent cuts from the set require no computation. Similarly, cat moves take linear time to identify all bridges, then compute all $2$-edge-connected components, and then to identify degree/size properties. Thus all moves are computable in polynomial (and in particular linear) time. Further, we have an logarithmic time recurrence to compute the $\cat$ number of the graph which will be on the order of $n^2/3$.

\section{Conclusion}
The herder's strategy in many of the graphs we have looked at has been in essence to `cut the graph in half'. The most precise variant of this is for the herder to choose a vertex partition $V(G)=S\cup\bar{S},\emptyset\subsetneq S\subsetneq V(G)$ minimizing the quantity $|[S,\bar{S}]|+\max(\cat(G_S),\cat(G_{\bar{S}}))$, where $G_S$ denotes the subgraph induced by $S\subsetneq V(G)$. A less precise approach still providing an upper bound would allow the herder to cut the graph in half, only looking at the number of vertices. Bisecting a graph into vertex sets $S\cup \bar{S}$ where $|S|=\lfloor |V(G)|/2\rfloor$ while minimizing $|[S,\bar{S}]|$ is known to be NP-complete\cite{garey1979computers}, and the size of $[S,\bar{S}]$ is known as bisection width. The bisection width has had multiple approximation algorithms\cite{BERRY199927,715914,goldberg1984minimal,6771089,367082} and has been generalized to hypergraphs\cite{10.1145/3529090}. Bisection width has also been given spectral bounds\cite{10.1147/rd.175.0420} and shown to typically grow linearly with vertices on cubic graphs\cite{clark_entringer_1989} and sufficiently dense random graphs\cite{randomBisect}. We will note that while vertex bisection can provide an upper bound, bisection is not generally a good approach for cat herding; consider stars $S_n$ have bisection width asymptotically $n/2$ and cat number always $2$.

We will tease that we have partial results for cat herding on trees, and then leave the reader with the following open questions. 

Is the binary search strategy used in complete graphs optimal in the case of recursively defined graphs, such as hypercubes? An obvious herder strategy is to cut $Q_n$ into two $Q_{n-1}$s, but there exist other (smaller) cuts that break some $Q_2$ in every dimension, rapidly limiting assumptions about 2-edge-connectedness. An alternate question, for any optimal play on graph $G$, if one takes the graph induced by all the deleted edges at the moment of first disconnection, will it always be bipartite, as the binary search strategy might suggest? 

Supposing the cat restricts themself to $H$, a subgraph of $G$, when can they guarantee that they are captured on specific vertices of $H$? A proper answer to this question may allow the cat to restrict and then un-restrict themself to $H$ in $G$.

How can analysis of the game of cat herding be used to assist analysis of the combinatorial game Nowhere To Go? This is an obvious next question since this game came models an endgame of Nowhere To Go.

A slew of questions emerge on variants of the game. For instance, if we restrict the herder to only cutting edges that the cat used in the previous turn, how does the $\cat$ parameter change? Alternately, suppose we always delete all edges the cat takes for a one-player game --- how does the corresponding graph parameter change then? Another alternate game would be to require the herder to delete a vertex on each turn. It is not hard to check that for trees, this is the same game, and for cycles, the values match for $n\neq 2^k+2$, but the values vary widely for complete graphs, with a score of $n-1$ in the vertex deleting variant. One final alternate game would be to generalize the cat to always move on a path of at least $k$ edges, in which case our game is the $k=1$ variant.

\bibliographystyle{abbrv}
\bibliography{main}
\pagebreak
\appendix
\section{Small Cases Code}
Included for completeness is the code used to verify the small cases of \Cref{KnSmallCases}. The code is written in a Jupyter notebook running a Sage Worksheet server. It is composed of three main blocks. First, we generate all of the requisite partitions. Second, we construct an automated checker for the partitions and run it against them all.

    \begin{tcolorbox}[breakable, size=fbox, boxrule=1pt, pad at break*=1mm,colback=cellbackground, colframe=cellborder]
\prompt{In}{incolor}{1}{\boxspacing}
\begin{Verbatim}[commandchars=\\\{\}]
\PY{n}{partitions} \PY{o}{=} \PY{p}{[}\PY{p}{]}\PY{p}{;}
\PY{c+c1}{\PYZsh{}1. n\PYZlt{}9:}
\PY{k}{for} \PY{n}{n} \PY{o+ow}{in} \PY{p}{[}\PY{l+m+mf}{2.}\PY{l+m+mf}{.8}\PY{p}{]}\PY{p}{:}
    \PY{k}{for} \PY{n}{p} \PY{o+ow}{in} \PY{n}{Partitions}\PY{p}{(}\PY{n}{n}\PY{p}{)}\PY{p}{:}
        \PY{k}{if} \PY{l+m+mi}{2} \PY{o+ow}{in} \PY{n}{p} \PY{o+ow}{or} \PY{n+nb}{len}\PY{p}{(}\PY{n}{p}\PY{p}{)}\PY{o}{==}\PY{l+m+mi}{1}\PY{p}{:}
            \PY{k}{continue}\PY{p}{;}
        \PY{n}{partitions}\PY{o}{.}\PY{n}{append}\PY{p}{(}\PY{n}{p}\PY{p}{)}\PY{p}{;}
\PY{c+c1}{\PYZsh{}2. \PYZbs{}lambda\PYZus{}\PYZob{}k\PYZhy{}1\PYZcb{}=1, k\PYZbs{}in [3,4,5,6,7,8,9,10,11,12], \PYZbs{}lambda\PYZus{}k\PYZbs{}leq [56,14,9,6,5,4,3,1,1,1]}
\PY{k}{for} \PY{n}{k} \PY{o+ow}{in} \PY{p}{[}\PY{l+m+mi}{3}\PY{p}{,}\PY{l+m+mi}{4}\PY{p}{,}\PY{l+m+mi}{5}\PY{p}{,}\PY{l+m+mi}{6}\PY{p}{,}\PY{l+m+mi}{7}\PY{p}{,}\PY{l+m+mi}{8}\PY{p}{,}\PY{l+m+mi}{9}\PY{p}{,}\PY{l+m+mi}{10}\PY{p}{,}\PY{l+m+mi}{11}\PY{p}{,}\PY{l+m+mi}{12}\PY{p}{]}\PY{p}{:}
    \PY{n}{lkMax} \PY{o}{=} \PY{p}{\PYZob{}}\PY{l+m+mi}{3}\PY{p}{:} \PY{l+m+mi}{56}\PY{p}{,} \PY{l+m+mi}{4}\PY{p}{:} \PY{l+m+mi}{14}\PY{p}{,} \PY{l+m+mi}{5}\PY{p}{:} \PY{l+m+mi}{9}\PY{p}{,} \PY{l+m+mi}{6}\PY{p}{:} \PY{l+m+mi}{6}\PY{p}{,} \PY{l+m+mi}{7}\PY{p}{:} \PY{l+m+mi}{5}\PY{p}{,} \PY{l+m+mi}{8}\PY{p}{:} \PY{l+m+mi}{4}\PY{p}{,} \PY{l+m+mi}{9}\PY{p}{:} \PY{l+m+mi}{3}\PY{p}{,} \PY{l+m+mi}{10}\PY{p}{:} \PY{l+m+mi}{1}\PY{p}{,} \PY{l+m+mi}{11}\PY{p}{:} \PY{l+m+mi}{1}\PY{p}{,} \PY{l+m+mi}{12}\PY{p}{:} \PY{l+m+mi}{1}\PY{p}{\PYZcb{}}\PY{p}{[}\PY{n}{k}\PY{p}{]}\PY{p}{;}
    \PY{k}{for} \PY{n}{lk} \PY{o+ow}{in} \PY{p}{[}\PY{l+m+mf}{1.}\PY{o}{.}\PY{n}{lkMax}\PY{p}{]}\PY{p}{:}
        \PY{k}{if} \PY{n}{lk}\PY{o}{==}\PY{l+m+mi}{2}\PY{p}{:}
            \PY{k}{continue}\PY{p}{;}
        \PY{n}{p} \PY{o}{=} \PY{p}{[}\PY{n}{lk}\PY{p}{]}\PY{p}{;}
        \PY{k}{while} \PY{n+nb}{len}\PY{p}{(}\PY{n}{p}\PY{p}{)}\PY{o}{\PYZlt{}}\PY{n}{k}\PY{p}{:}
            \PY{n}{p}\PY{o}{.}\PY{n}{append}\PY{p}{(}\PY{l+m+mi}{1}\PY{p}{)}\PY{p}{;}
        \PY{n}{partitions}\PY{o}{.}\PY{n}{append}\PY{p}{(}\PY{n}{p}\PY{p}{)}\PY{p}{;}
\PY{c+c1}{\PYZsh{}3. \PYZbs{}lambda\PYZus{}3=\PYZbs{}lambda\PYZus{}k,\PYZbs{}lambda\PYZus{}2=1, k\PYZbs{}in [3,4], \PYZbs{}lambda\PYZus{}3\PYZlt{}[57,35]}
\PY{k}{for} \PY{n}{l3} \PY{o+ow}{in} \PY{p}{[}\PY{l+m+mf}{1.}\PY{l+m+mf}{.56}\PY{p}{]}\PY{p}{:}
    \PY{k}{if} \PY{n}{l3}\PY{o}{==}\PY{l+m+mi}{2}\PY{p}{:}
        \PY{k}{continue}\PY{p}{;}
    \PY{n}{p} \PY{o}{=} \PY{p}{[}\PY{n}{l3}\PY{p}{,}\PY{l+m+mi}{1}\PY{p}{,}\PY{l+m+mi}{1}\PY{p}{]}\PY{p}{;}
    \PY{n}{partitions}\PY{o}{.}\PY{n}{append}\PY{p}{(}\PY{n}{p}\PY{p}{)}\PY{p}{;}
\PY{k}{for} \PY{n}{l4} \PY{o+ow}{in} \PY{p}{[}\PY{l+m+mf}{1.}\PY{l+m+mf}{.34}\PY{p}{]}\PY{p}{:}
    \PY{k}{if} \PY{n}{l4}\PY{o}{==}\PY{l+m+mi}{2}\PY{p}{:}
        \PY{k}{continue}\PY{p}{;}
    \PY{n}{p} \PY{o}{=} \PY{p}{[}\PY{n}{l4}\PY{p}{,}\PY{n}{l4}\PY{p}{,}\PY{l+m+mi}{1}\PY{p}{,}\PY{l+m+mi}{1}\PY{p}{]}\PY{p}{;}
    \PY{n}{partitions}\PY{o}{.}\PY{n}{append}\PY{p}{(}\PY{n}{p}\PY{p}{)}\PY{p}{;}
\PY{c+c1}{\PYZsh{}k=2, n\PYZlt{}27}
\PY{k}{for} \PY{n}{n} \PY{o+ow}{in} \PY{p}{[}\PY{l+m+mf}{9.}\PY{l+m+mf}{.27}\PY{p}{]}\PY{p}{:}
    \PY{k}{for} \PY{n}{l1} \PY{o+ow}{in} \PY{p}{[}\PY{l+m+mf}{1.}\PY{o}{.}\PY{n}{floor}\PY{p}{(}\PY{n}{n}\PY{o}{/}\PY{l+m+mi}{2}\PY{p}{)}\PY{p}{]}\PY{p}{:}
        \PY{k}{if} \PY{n}{l1}\PY{o}{==}\PY{l+m+mi}{2}\PY{p}{:}
            \PY{k}{continue}\PY{p}{;}
        \PY{n}{p} \PY{o}{=} \PY{p}{[}\PY{n}{n}\PY{o}{\PYZhy{}}\PY{n}{l1}\PY{p}{,}\PY{n}{l1}\PY{p}{]}
        \PY{n}{partitions}\PY{o}{.}\PY{n}{append}\PY{p}{(}\PY{n}{p}\PY{p}{)}
\PY{c+c1}{\PYZsh{}k=3, \PYZbs{}lambda\PYZus{}k=\PYZbs{}lambda\PYZus{}2, \PYZbs{}lambda\PYZus{}1\PYZbs{}in [3,4,5,6], \PYZbs{}lambda\PYZus{}2\PYZbs{}leq [8,6,6,6]}
\PY{k}{for} \PY{n}{l1} \PY{o+ow}{in} \PY{p}{[}\PY{l+m+mi}{3}\PY{p}{,}\PY{l+m+mi}{4}\PY{p}{,}\PY{l+m+mi}{5}\PY{p}{,}\PY{l+m+mi}{6}\PY{p}{]}\PY{p}{:}
    \PY{n}{l2Max} \PY{o}{=} \PY{p}{\PYZob{}}\PY{l+m+mi}{3}\PY{p}{:} \PY{l+m+mi}{8}\PY{p}{,} \PY{l+m+mi}{4}\PY{p}{:} \PY{l+m+mi}{6}\PY{p}{,} \PY{l+m+mi}{5}\PY{p}{:} \PY{l+m+mi}{6}\PY{p}{,} \PY{l+m+mi}{6}\PY{p}{:} \PY{l+m+mi}{6}\PY{p}{\PYZcb{}}\PY{p}{[}\PY{n}{l1}\PY{p}{]}\PY{p}{;}
    \PY{k}{for} \PY{n}{l2} \PY{o+ow}{in} \PY{p}{[}\PY{l+m+mf}{1.}\PY{o}{.}\PY{n}{l2Max}\PY{p}{]}\PY{p}{:}
        \PY{k}{if} \PY{n}{l2}\PY{o}{==}\PY{l+m+mi}{2} \PY{o+ow}{or} \PY{n}{l1}\PY{o}{\PYZgt{}}\PY{n}{l2}\PY{p}{:}
            \PY{k}{continue}\PY{p}{;}
        \PY{n}{p} \PY{o}{=} \PY{p}{[}\PY{n}{l2}\PY{p}{,}\PY{n}{l2}\PY{p}{,}\PY{n}{l1}\PY{p}{]}\PY{p}{;}
        \PY{n}{partitions}\PY{o}{.}\PY{n}{append}\PY{p}{(}\PY{n}{p}\PY{p}{)}\PY{p}{;}
\PY{c+c1}{\PYZsh{}k=4, \PYZbs{}lambda\PYZus{}k=\PYZbs{}lambda\PYZus{}2, \PYZbs{}lambda\PYZus{}2\PYZbs{}leq 4.}
\PY{k}{for} \PY{n}{l2} \PY{o+ow}{in} \PY{p}{[}\PY{l+m+mi}{1}\PY{p}{,}\PY{l+m+mi}{3}\PY{p}{,}\PY{l+m+mi}{4}\PY{p}{]}\PY{p}{:}
    \PY{k}{for} \PY{n}{l1} \PY{o+ow}{in} \PY{p}{[}\PY{l+m+mf}{1.}\PY{o}{.}\PY{n}{l2}\PY{p}{]}\PY{p}{:}
        \PY{k}{if} \PY{n}{l1}\PY{o}{==}\PY{l+m+mi}{2}\PY{p}{:}
            \PY{k}{continue}\PY{p}{;}
        \PY{n}{p} \PY{o}{=} \PY{p}{[}\PY{n}{l2}\PY{p}{,}\PY{n}{l2}\PY{p}{,}\PY{n}{l2}\PY{p}{,}\PY{n}{l1}\PY{p}{]}\PY{p}{;}
        \PY{n}{partitions}\PY{o}{.}\PY{n}{append}\PY{p}{(}\PY{n}{p}\PY{p}{)}\PY{p}{;}
\PY{c+c1}{\PYZsh{}k=5, \PYZbs{}lambda\PYZus{}k=\PYZbs{}lambda\PYZus{}2, \PYZbs{}lambda\PYZus{}1=1,\PYZbs{}lambda\PYZus{}2=3}
\PY{n}{partitions}\PY{o}{.}\PY{n}{append}\PY{p}{(}\PY{p}{[}\PY{l+m+mi}{3}\PY{p}{,}\PY{l+m+mi}{3}\PY{p}{,}\PY{l+m+mi}{3}\PY{p}{,}\PY{l+m+mi}{3}\PY{p}{,}\PY{l+m+mi}{1}\PY{p}{]}\PY{p}{)}\PY{p}{;}

\PY{n+nb}{print}\PY{p}{(}\PY{l+s+s2}{\PYZdq{}}\PY{l+s+s2}{Generated }\PY{l+s+s2}{\PYZdq{}} \PY{o}{+} \PY{n+nb}{str}\PY{p}{(}\PY{n+nb}{len}\PY{p}{(}\PY{n}{partitions}\PY{p}{)}\PY{p}{)} \PY{o}{+} \PY{l+s+s2}{\PYZdq{}}\PY{l+s+s2}{ partitions.}\PY{l+s+s2}{\PYZdq{}}\PY{p}{)}\PY{p}{;}
\end{Verbatim}
\end{tcolorbox}

    \begin{Verbatim}[commandchars=\\\{\}]
Generated 376 partitions.
    \end{Verbatim}

    \begin{tcolorbox}[breakable, size=fbox, boxrule=1pt, pad at break*=1mm,colback=cellbackground, colframe=cellborder]
\prompt{In}{incolor}{2}{\boxspacing}
\begin{Verbatim}[commandchars=\\\{\}]
\PY{n+nd}{@CachedFunction}
\PY{k}{def} \PY{n+nf}{c}\PY{p}{(}\PY{n}{n}\PY{p}{)}\PY{p}{:}
    \PY{k}{if} \PY{n}{n}\PY{o}{\PYZlt{}}\PY{o}{=}\PY{l+m+mi}{3}\PY{p}{:}
        \PY{k}{return} \PY{n}{n}\PY{o}{\PYZhy{}}\PY{l+m+mi}{1}\PY{p}{;}
    \PY{k}{return} \PY{p}{(}\PY{n}{floor}\PY{p}{(}\PY{n}{n}\PY{o}{/}\PY{l+m+mi}{2}\PY{p}{)}\PY{o}{*}\PY{n}{ceil}\PY{p}{(}\PY{n}{n}\PY{o}{/}\PY{l+m+mi}{2}\PY{p}{)}\PY{p}{)}\PY{o}{+}\PY{n}{c}\PY{p}{(}\PY{n}{ceil}\PY{p}{(}\PY{n}{n}\PY{o}{/}\PY{l+m+mi}{2}\PY{p}{)}\PY{p}{)}\PY{p}{;}

\PY{k}{def} \PY{n+nf}{testPartition}\PY{p}{(}\PY{n}{p}\PY{p}{)}\PY{p}{:}
    \PY{n}{score} \PY{o}{=} \PY{l+m+mi}{0}\PY{p}{;}
    \PY{c+c1}{\PYZsh{}Fully disconnect partition.}
    \PY{k}{for} \PY{n}{i} \PY{o+ow}{in} \PY{n+nb}{range}\PY{p}{(}\PY{n+nb}{len}\PY{p}{(}\PY{n}{p}\PY{p}{)}\PY{p}{)}\PY{p}{:}
        \PY{k}{for} \PY{n}{j} \PY{o+ow}{in} \PY{n+nb}{range}\PY{p}{(}\PY{n}{i}\PY{o}{+}\PY{l+m+mi}{1}\PY{p}{,}\PY{n+nb}{len}\PY{p}{(}\PY{n}{p}\PY{p}{)}\PY{p}{)}\PY{p}{:}
            \PY{n}{score} \PY{o}{=} \PY{n}{score} \PY{o}{+} \PY{n}{p}\PY{p}{[}\PY{n}{i}\PY{p}{]}\PY{o}{*}\PY{n}{p}\PY{p}{[}\PY{n}{j}\PY{p}{]}\PY{p}{;}
    \PY{c+c1}{\PYZsh{}Don\PYZsq{}t count the bridges.}
    \PY{n}{score} \PY{o}{=} \PY{n}{score} \PY{o}{\PYZhy{}} \PY{p}{(}\PY{n+nb}{len}\PY{p}{(}\PY{n}{p}\PY{p}{)}\PY{o}{\PYZhy{}}\PY{l+m+mi}{1}\PY{p}{)}\PY{p}{;}
    \PY{c+c1}{\PYZsh{}Do count the recursive cost}
    \PY{n}{score} \PY{o}{=} \PY{n}{score} \PY{o}{+} \PY{n}{c}\PY{p}{(}\PY{n}{p}\PY{p}{[}\PY{l+m+mi}{0}\PY{p}{]}\PY{p}{)}\PY{p}{;}
    \PY{n}{n} \PY{o}{=} \PY{n+nb}{sum}\PY{p}{(}\PY{n}{p}\PY{p}{)}
    \PY{c+c1}{\PYZsh{}Final adjustments:}
    \PY{k}{if} \PY{n}{p}\PY{p}{[}\PY{l+m+mi}{0}\PY{p}{]}\PY{o}{==}\PY{l+m+mi}{1}\PY{p}{:}
        \PY{c+c1}{\PYZsh{}Recursive score is 0, but cat now plays path strategy.}
        \PY{k}{if} \PY{n+nb}{sum}\PY{p}{(}\PY{n}{p}\PY{p}{)}\PY{o}{\PYZpc{}}\PY{k}{2}==0:
            \PY{n}{score} \PY{o}{=} \PY{n}{score} \PY{o}{+} \PY{n}{ceil}\PY{p}{(}\PY{n}{log}\PY{p}{(}\PY{n}{n}\PY{p}{,}\PY{l+m+mi}{2}\PY{p}{)}\PY{p}{)}\PY{p}{;} \PY{c+c1}{\PYZsh{}On even paths, cat may move to center.}
        \PY{k}{else}\PY{p}{:}
            \PY{n}{score} \PY{o}{=} \PY{n}{score} \PY{o}{+} \PY{n}{ceil}\PY{p}{(}\PY{n}{log}\PY{p}{(}\PY{n}{n}\PY{p}{,}\PY{l+m+mi}{2}\PY{p}{)}\PY{p}{)}\PY{o}{\PYZhy{}}\PY{l+m+mi}{1}\PY{p}{;} \PY{c+c1}{\PYZsh{}On odd paths, cat may not have center available, but can still score within one of optimal path score.}
    \PY{k}{elif} \PY{n}{p}\PY{o}{==}\PY{p}{[}\PY{l+m+mi}{3}\PY{p}{,}\PY{l+m+mi}{3}\PY{p}{,}\PY{l+m+mi}{1}\PY{p}{]}\PY{p}{:}
        \PY{c+c1}{\PYZsh{}Cat moves to center. On any cut, cat plays in [3,1] and scores an extra point per the next argument.}
        \PY{n}{score} \PY{o}{=} \PY{n}{score} \PY{o}{+} \PY{l+m+mi}{2}\PY{p}{;}
    \PY{k}{elif} \PY{n}{p}\PY{p}{[}\PY{l+m+mi}{0}\PY{p}{]}\PY{o}{==}\PY{l+m+mi}{3}\PY{p}{:}
        \PY{c+c1}{\PYZsh{}Recursive score is 2, but cat can move to K3 non\PYZhy{}adjacent to outgoing edge. If next cut in K3, then cat moves to vertex in K3 adjacent to outgoing edge, center of a P3. If next cut out of K3, cat plays in K3. Either way, cat scores 3 more, not 2 more as expected.}
        \PY{n}{score} \PY{o}{=} \PY{n}{score} \PY{o}{+} \PY{l+m+mi}{1}\PY{p}{;}
    \PY{k}{elif} \PY{n}{p}\PY{o}{==}\PY{p}{[}\PY{l+m+mi}{6}\PY{p}{,}\PY{l+m+mi}{1}\PY{p}{]} \PY{o+ow}{or} \PY{n}{p}\PY{o}{==}\PY{p}{[}\PY{l+m+mi}{12}\PY{p}{,}\PY{l+m+mi}{1}\PY{p}{]} \PY{o+ow}{or} \PY{n}{p}\PY{o}{==}\PY{p}{[}\PY{l+m+mi}{24}\PY{p}{,}\PY{l+m+mi}{1}\PY{p}{]}\PY{p}{:}
        \PY{c+c1}{\PYZsh{}Cost to reach is 5. K6 would naively cost 11, we must show K7 gets 17. Cat plays in K6 until next critical graph.}
        \PY{c+c1}{\PYZsh{}If herder cuts bridge, get the extra point. Else herder eventually reaches a different partition of 7 containing a 1, all of which are verified below to score at least c(7)=17 total.}
        \PY{n}{score} \PY{o}{=} \PY{n}{score} \PY{o}{+} \PY{l+m+mi}{1}\PY{p}{;}
    \PY{k}{elif} \PY{n}{p}\PY{p}{[}\PY{l+m+mi}{0}\PY{p}{]}\PY{o}{\PYZhy{}}\PY{n}{p}\PY{p}{[}\PY{l+m+mi}{1}\PY{p}{]}\PY{o}{\PYZlt{}}\PY{o}{=}\PY{l+m+mi}{1}\PY{p}{:}
        \PY{c+c1}{\PYZsh{}Cat uses strategy of staying in p[0] until they need to move to p[1] to score 1 extra, less the bridge is cut}
        \PY{n}{score} \PY{o}{=} \PY{n}{score} \PY{o}{+} \PY{l+m+mi}{1}\PY{p}{;}
    \PY{k}{if} \PY{n}{score} \PY{o}{\PYZlt{}} \PY{n}{c}\PY{p}{(}\PY{n}{n}\PY{p}{)}\PY{p}{:} \PY{c+c1}{\PYZsh{}print anything where the score lower bound doesn\PYZsq{}t at least meet c(n)}
        \PY{n+nb}{print}\PY{p}{(}\PY{n}{p}\PY{p}{,} \PY{n}{score}\PY{p}{,} \PY{n}{c}\PY{p}{(}\PY{n}{n}\PY{p}{)}\PY{p}{)}\PY{p}{;}
\PY{k}{for} \PY{n}{p} \PY{o+ow}{in} \PY{n}{partitions}\PY{p}{:}
    \PY{n}{testPartition}\PY{p}{(}\PY{n}{p}\PY{p}{)}\PY{p}{;}
\PY{n+nb}{print}\PY{p}{(}\PY{l+s+s2}{\PYZdq{}}\PY{l+s+s2}{All }\PY{l+s+s2}{\PYZdq{}} \PY{o}{+} \PY{n+nb}{str}\PY{p}{(}\PY{n+nb}{len}\PY{p}{(}\PY{n}{partitions}\PY{p}{)}\PY{p}{)} \PY{o}{+} \PY{l+s+s2}{\PYZdq{}}\PY{l+s+s2}{ partitions tested!}\PY{l+s+s2}{\PYZdq{}}\PY{p}{)}\PY{p}{;}
\end{Verbatim}
\end{tcolorbox}

    \begin{Verbatim}[commandchars=\\\{\}]
All 376 partitions tested!
    \end{Verbatim}

\end{document}